\def\sizefig{0.65}
\def\sizesmallfig{0.29}
\def\S{\mathbf{S}}
\def\A{\mathbf{A}}
\def\B{\mathbf{B}}
\def\g{\mathbf{g}}
\def\b{\mathbf{b}}
\def\Cb{\mathbf{C}}
\def\G{\mathbf{G}}
\def\h{\mathbf{h}}
\def\S{\mathbf{S}}
\def\X{\mathbf{X}}
\def\T{\mathbf{T}}
\def\x{\mathbf{x}}
\def\y{\mathbf{y}}
\def\p{\mathbf{p}}
\def\z{\mathbf{z}}
\renewcommand\u{\mathbf{u}}
\renewcommand\v{\mathbf{v}}
\def\M{\mathbf{M}}
\def\Lop{\mathcal{L}}
\def\sing{\text{sing}}
\def\ac{\text{ac}}
\def\tv{\text{TV}}
\def\cont{\text{cont}}
\def\disc{\text{disc}}
\def\linfun{\mathscr{I}}
\newcommand{\R}{\mathbb{R}}
\newcommand{\C}{\mathbb{C}}
\newcommand{\N}{\mathbb{N}}
\def\div{\text{div}}
\def\grad{\text{grad}}
\theoremstyle{plain}
\newtheorem{theorem}{Theorem}[section]
\newtheorem{lemma}[theorem]{Lemma}
\theoremstyle{definition}
\newtheorem{definition}[theorem]{Definition}
\newtheorem{assumption}[theorem]{Assumption}
\newtheorem{remark}{Remark}
\newcommand{\qed}{\nobreak \ifvmode \relax \else
      \ifdim\lastskip<1.5em \hskip-\lastskip
      \hskip1.5em plus0em minus0.5em \fi \nobreak
      \vrule height0.75em width0.5em depth0.25em\fi}
\DeclareMathOperator{\vol}{vol}
\DeclareMathOperator{\diam}{diam}
\DeclareMathOperator{\dist}{dist}
\begin{document}
\author{Victor Magron$^{1}$ \and Marcelo Forets$^{1}$ \and Didier Henrion$^{2,3}$}
\date{\today}
\title{Semidefinite Approximations of Invariant Measures for  Polynomial Systems}

\footnotetext[1]{CNRS; VERIMAG; 700 av Centrale, 38401 Saint-Martin d'Hères; France}
\footnotetext[2]{CNRS; LAAS; Universit\'e de Toulouse; 7 avenue du colonel Roche, F-31400 Toulouse; France}
\footnotetext[3]{Faculty of Electrical Engineering, Czech Technical University in Prague, Technick\'a 4, CZ-16206 Prague, Czechia}
\maketitle



\begin{abstract}

We consider the problem of approximating numerically the moments and the supports of measures which are invariant with respect to the dynamics of continuous- and discrete-time polynomial systems, under semialgebraic set constraints. 
First, we address the problem of approximating the density and hence the support of an invariant measure which is absolutely continuous with respect to the Lebesgue measure. Then, we focus on the  approximation of the support of  an invariant measure which is singular with respect to the Lebesgue measure.

Each problem is handled through an appropriate reformulation into a linear optimization problem over measures, solved in practice with two hierarchies of finite-dimensional semidefinite moment-sum-of-square relaxations, also called Lasserre hierarchies.

Under specific assumptions, the first Lasserre hierarchy allows to approximate the moments of an absolutely continuous invariant measure as close as desired and to extract a sequence of polynomials converging weakly to the density of this measure.

The second Lasserre hierarchy allows to approximate as close as desired in the Hausdorff metric the support of a singular invariant measure with the level sets of the Christoffel polynomials associated to the moment matrices of this measure.

We also present some application examples together with numerical results for several dynamical systems admitting either absolutely continuous  or singular invariant measures. 

\end{abstract}

\paragraph{Keywords:} 

invariant measures, dynamical systems, polynomial optimization, semidefinite programming, moment-sum-of-square relaxations

\section{Introduction}
\label{sec:intro}

Given a polynomial system described by a discrete-time (difference) or continuous-time (differential) equation under general semialgebraic constraints, we propose {numerical methods to approximate the moments and the supports} of the measures which are invariant under the sytem dynamics. 
The characterization of invariant measures allows to determine important features of long term dynamical behaviors \cite{LM94}.

One classical way to approximate such features is to perform numerical integration of the equation satisfied by the system state after choosing some initial conditions. However, the resulting trajectory could exhibit some chaotic behaviors or great sensitivity with respect to the initial conditions.

Numerical computation of invariant sets and measures of dynamical systems have previously been studied using domain subdivision techniques, where the density of an invariant measure is recovered as the solution of fixed point equations of the discretized Perron-Frobenius operator \cite{dellnitz1997exploring,aston2014computing}. The underlying  method, integrated in the software package GAIO \cite{dellnitz2001algorithms}, consists of covering the invariant set by boxes and then approximating the dynamical behaviour by a Markov chain based on transition probabilities between elements of this covering. More recently, in \cite{dellnitz2017set} the authors have developed a multilevel subdivision scheme that can handle uncertain ordinary differential equations as well.

By contrast with most of the existing work in the literature, our method does not rely neither on time nor on space discretization. In our approach, the invariant measures are modeled with finitely many moments, leading to approximate recovery of densities in the absolutely continuous case or supports in the singular case. Our contribution is in the research trend aiming at characterizing the behavior of dynamical nonlinear systems through linear programs (LP), whose unkown are measures supported on the system constraints. This methodology was introduced in the static case by Lasserre~\cite{Lasserre01moments} and consists of reformulating a polynomial optimization problem as an infinite-dimensional LP over probability measures. To handle practically such an LP, one can then rely on a hierarchy of semidefinite programming (SDP) problems, called moment-sum-of-square or Lasserre hierarchy, see \cite{lasserre2009moments} for a global view.

In the context of polynomial dynamical systems, extensions to this hierarchy have been studied, for example, to obtain converging approximations for regions of attraction~\cite{HK14roa}, maximum controlled invariants~\cite{KHJ13mci} and reachable sets~\cite{MGHT17reach}.

In our case, we first focus on the characterization of  densities of absolutely continuous invariant measures with respect to some reference measure (for instance the Lebesgue measure). For this first problem, our method is inspired by previous contributions looking for moment conditions ensuring that the underlying unknown measure is absolutely continuous~\cite{lasserre2013borel} with a bounded density in a Lebesgue space, as a follow-up of the volume approxilation results of \cite{HLS09vol}. When the density function is assumed to be square-integrable, one can rely on~\cite{HDM14meansquared} to build a sequence of polynomial approximations converging to this density in the $L^2$-norm.

We focus later on the characterization of 
supports of singular invariant measures. For this second problem, we rely on previous works~\cite{HLS09vol,Las16Decomp,Christoffel17} aiming at extracting as much information as possible on the support of a  measure from the knowledge of its moments. 
{The numerical scheme proposed in~\cite{HLS09vol} allows to approximate as close as desired the moments of a measure uniformly supported on a given semialgebraic set.
The framework from~\cite{Las16Decomp} uses similar techniques to compute effectively the Lebesgue decomposition of a given measure}, while~\cite{Christoffel17} relies on the Christoffel function associated to the moment matrix of this measure. When the measure is uniform or when the support of the measure satisfies certain conditions, the sequence of level sets of the Christoffel function converges to the measure support with respect to the Hausdorff distance.

Previous work by the third author~\cite{HenrionFixpoints} shows how to use the Lasserre hierarchy to characterize invariant measures for one-dimensional discrete polynomial dynamical systems. We extend significantly this work in the sense that we now characterize invariant measures on more general multidimensional semialgebraic sets, in both discrete and continuous settings, and we establish convergence guarantees under certain assumptions.
In the concurrent work \cite{Korda}, the authors are also using the Lasserre hiearchy for approximately computing extremal measures, i.e. invariant measures optimal w.r.t.~a convex criterion. They have weaker convergence guarantees than ours, but the problem is formulated in a more general set-up including physical measures, ergodic measures or atomic measures.

Our contribution is twofold:

\begin{itemize}
\item A first Lasserre hierarchy allowing to approximate the moments of an invariant measure which is absolutely continuous with respect to the Lebesgue measure. For an invariant measure with either square integrable or essentially bounded density, one has  convergence guarantees of the hierarchy and one can compute asymptotically the exact moment sequence of this measure.
At each step of the hierarchy, one can recover an approximate polynomial density from the solution of the SDP problem. 
The resulting sequence of polynomial approximations converges weakly to the exact density when the degree of the polynomial goes to infinity.
\item A second Lasserre hierarchy allowing to approximate as close as desired the support of a singular invariant measure. At each step of this hierarchy, one can recover an approximation of the support with the superlevel set of the Christoffel polynomial associated to the moment matrix of the singular measure. Under certain assumptions, the maximal distance between the exact support and the points among this superlevel set converges to zero when the size of the moment matrix goes to infinity, i.e. we can ensure convergence in the Hausdorff metric.
\end{itemize}

In both cases, our results apply for both discrete-time and continuous-time polynomial systems. Each problem is handled through an adequate reformulation into an LP problem over probability measures. We show how to solve in practice this infinite-dimensional problem with moment relaxations. In both cases, this boils down to solving a Lasserre hierarchy of finite-dimensional SDP problems of increasing size.
  
In Section~\ref{sec:pb}, we describe preliminary materials about discrete/continuous-time polynomial systems, invariant measures as well as polynomial sums of squares and moment matrices. 
The first problem of density approximation for absolutely continuous invariant measures is handled in Section~\ref{sec:ac} with our first hierarchy. The second problem of support approximation for singular invariant measures is investigated in Section~\ref{sec:sing} with our second hierarchy.
Finally, we illustrate the method with several numerical experiments in Section~\ref{sec:bench}.

\section{Invariant Measures and Polynomial Systems}
\label{sec:pb}

\subsection{Discrete-Time and Continuous-Time Polynomial Systems}
\label{sec:systems}

Given $r,n \in \N$, let $\R[\x]$ (resp.~$\R_{2r}[\x]$) stand for the vector space of real-valued $n$-variate polynomials (resp. of degree at most $2r$) in the variable $\x=(x_1,\ldots,x_n) \in \R^n$. Let $\C[\x]$ be the vector space of complex-valued $n$-variate polynomials.
We are interested in the polynomial system defined by

\begin{itemize}
\item a polynomial transition map 
\begin{equation}
\label{eq:defPolynomialMap_f}
 f : \R^n \to \R^n,\qquad \x \mapsto f(\x) := (f_1(\x), \dots, f_n(\x)) \in \R^n[\x]
\end{equation}
with $f_1,\dots,f_n \in \R[\x]$. The degree of $f$ is given by $d_f := \max \{\deg f_1,\ldots,\deg f_n\}$;
\item a set of constraints assumed to be compact basic semialgebraic:
\begin{equation} 
\label{eq:defX}
\X :=  \{\x \in \R^n : g_1(\x)  \geq 0, \dots, g_m (\x) \geq 0 \} \,,
\end{equation} 
defined by given polynomials $g_1,\dots,g_m \in \R[\x]$.
\end{itemize}

We develop our approach in parallel for discrete-time and continuous-time systems. 
With $f$ being a polynomial transition as in~\eqref{eq:defPolynomialMap_f} and $\X$ being a set of semialgebraic state constraints as in~\eqref{eq:defX}, we consider either the discrete-time system:
\begin{align}
\label{eq:disc}
\x_{t+1} = f(\x_t) \,, \quad \x_t \in \X \,, \quad t \in \N \,,
\end{align}
or the continuous-time system:
\begin{align}
\label{eq:cont}
\dot{\x}(t) = \frac{d\x(t)}{dt} = f(\x(t)) \,, \quad \x(t) \in \X \,, \quad t \in [0, \infty) \,.
\end{align}

\subsection{Invariant Measures}
\label{sec:invariant}

Given a compact set $\A \subset \R^n$, we denote by $\mathcal{M}(\A)$ the vector space of finite signed Borel measures supported on $\A$, namely real-valued functions on the Borel sigma algebra $\mathcal{B}(\A)$. 
The support of a measure $\mu \in \mathcal{M}(\A)$ is defined as the closure of the set of all points $\x$ such that $\mu(\B) \neq 0$ for an open neighborhood $\B$ of $\x$.
We note $\mathcal{C}(\A)$ (resp.~$\mathcal{C}^1(\A)$) the Banach space of continuous (resp.~continuously differentiable) functions on $\A$ equipped with the sup-norm.
Let $\mathcal{C}(\A)'$ be the topological dual of $\mathcal{C}(\A)$ (equipped with the sup-norm), i.e.~the set of continuous linear functionals of $\mathcal{C}(\A)$.
By a Riesz representation theorem, $\mathcal{C}(\A)'$ is isomorphically identified with
$\mathcal{M}(\A)$ equipped with the total variation norm denoted by $\|
\cdot\|_{\text{TV}}$.
Let $\mathcal{C}_+(\A)$ (resp.~$\mathcal{M}_+(\A)$) be the cone of non-negative elements of $\mathcal{C}(\A)$ (resp. $\mathcal{M}(\A)$). 
{A probability measure on $\A$ is an element $\mu \in \mathcal{M}_+(\A)$ such that $\mu(\A)=1$.}
The topology in $\mathcal{C}_+(\A)$ is the strong topology of uniform convergence in contrast with the weak-star topology that can be defined in $\mathcal{M}_+(\A)$. 

The restriction of the {Lebesgue measure} on a subset $\A \subseteq \X$ is
$\lambda_\A (d \x) := \mathbf{1}_\A(\x) \, d \x $, 
where $\mathbf{1}_\A : \X \to \{0, 1\}$ stands for the {indicator function} of $\A$, namely $\mathbf{1}_\A(\x) = 1$ if $\x \in \A$ and
 $\mathbf{1}_\A(\x) = 0$ otherwise.

The moments of the Lebesgue measure on $\A$ are denoted by
\begin{equation}
\label{momb}
z^\A_{\beta} := \int \x^{\beta} \lambda_\A(d\x) \in \R \,, \quad \beta \in \N^n
\end{equation}
where we use the multinomial notation $\x^{\beta} := x^{\beta_1}_1 x^{\beta_2}_2 \ldots x^{\beta_n}_n$. 
The sequence of Lebesgue moments on $\A$ is denoted by $\z^\A:=(z^\A_{\beta})_{\beta \in \N^n}$.
The Lebesgue volume of $\A$ is $\vol \A := z^\A_0 = \int \lambda_\A (d\x)$. When $\A = \X$, we define $\z := \z^\X$ by omitting the superscript notation. A sequence $\y:=(y_\beta)_{\beta \in \N^n} \in \R^{\N^n}$ is said to have a {representing measure} on $\X$ if there exists $\mu \in \mathcal{M}(\X)$ such that $y_\beta = \int \x^\beta \mu(d\x)$ for all $\beta \in \N^n$.

Given  $\mu,\nu \in \mathcal{M}(\A)$,
the notation
\[
\mu \leq \nu
\]
means that $\nu-\mu \in \mathcal{M}_+(\A)$, namely
that $\mu$ is {dominated} by $\nu$.

Given  $\mu \in \mathcal{M}_+(\A)$, there exists a unique Lebesgue decomposition $\mu = \nu + \psi$ with $\nu, \psi \in  \mathcal{M}_+(\A)$, $\nu \ll \lambda$ and $\psi \perp \lambda$. Here, the notation $\nu \ll \lambda$ means that $\nu$ is {absolutely continuous} with respect to (w.r.t.) $\lambda$, {that is, for every  $\A \in \mathcal{B}(\X)$, $\lambda (\A) = 0$ implies $\nu(\A) = 0$.}
The notation $\psi \perp \lambda$ means that $\psi$ is {singular} w.r.t.~$\lambda$, {that is, there exist disjoint sets $\A, \B \in \mathcal{B}(\X)$ such that $\A \cup \B = \X$ and $\psi(\A) = \lambda(\B) = 0$.}

The so-called pushforward measure or {image measure}  of a given $\mu \in \mathcal{M}_+(\X)$ under $f$ is defined as follows:
\begin{equation}
\label{eq:invmeas}
f_\# \mu (\A) := \mu (f^{-1}(\A)) = \mu (\{ \x \in \X : f(\x) \in \A \}),
\end{equation}
for every set $\A \in \mathcal{B}(\X)$.

See~\cite[Section 21.7]{Royden} and \cite[Chapter IV]{alexander2002course} or~\cite[Section 5.10]{Luenberger97} for additional background on functional analysis, measure theory and applications in convex optimization. For more details on image measures, see e.g.~\cite[Section 1.5]{AFP00}.

Let us define the linear operator $\Lop_f^\disc : \mathcal{C}(\X)' \to \mathcal{C}(\X)'$ by: 
\[
\Lop_f^\disc (\mu) :=  f_{\#} \mu-\mu
\]
and the linear operator $\Lop_f^\cont: \mathcal{C}^1(\X)' \to \mathcal{C}(\X)'$  by:
\[
\Lop_f^\cont (\mu) :=  \div(f\mu) = \sum_{i=1}^n \frac{\partial (f_i \mu)}{\partial x_i}
\]
where the derivatives of measures are understood in the sense of distributions, that is, through their action on test functions of $\mathcal{C}^1(\X)$. 
In the sequel, we use the more concise notation $\Lop_f$ to refer to $\Lop_f^\disc$ (resp.~$\Lop_f^\cont$) in the context of discrete-time (resp.~continuous-time) systems. 

\begin{definition}{\textbf{(Invariant measure)}}
\label{eq:inv}
We say that a measure $\mu$ is invariant w.r.t.~$f$ when $\Lop_f(\mu) = 0$ and refer to such a measure as an {invariant measure}. We also omit the reference to the map $f$ when it is obvious from the context and write $\Lop(\mu) = 0$.
\end{definition}

When considering discrete-time systems as in~\eqref{eq:disc}, a measure $\mu$ is called  invariant w.r.t.~$f$ when it satisfies $\Lop_f^\disc (\mu) = 0$. 
When considering continuous-time systems as in~\eqref{eq:cont}, a measure is called invariant w.r.t.~$f$ when it satisfies $\Lop_f^\cont (\mu) = 0$. 

{It was proved in~\cite{Krylov37} that a continuous map of a compact metric space into itself has at least one invariant probability measure. A probability measure $\mu$ is ergodic w.r.t.~$f$ if for all $\A \in \mathcal{B}(\X)$ with $f^{-1}(\A) = \A$, one has either $\mu(\A) = 0$ or $\mu(\A) = 1$. The set of invariant probability measures is a convex set and the extreme points of this set consist of the so-called {ergodic measures}. 
}
For more material on dynamical systems and invariant measures, we refer the interested reader to~\cite{LM94}.  

\subsection{Sums of Squares and Moment Matrices}
\label{sec:sdp}

With $\X$ a basic compact semialgebraic set as in~\eqref{eq:defX}, we set $r_j := \lceil (\deg g_j ) / 2 \rceil, j = 1, \dots, m$. For the ease of further notation, we set $g_0(\x) := 1$.
Let $\Sigma[\x]$ stand for the cone of polynomials that can be expressed as sums of squares (SOS) of some polynomials, and let us note $\Sigma_r[\x]$ the cone of SOS polynomials of degree at most $2 r$, namely $\Sigma_r[\x] := \Sigma[\x] \cap \R_{2r}[\x]$.

For each $r \geq r_{\min} := \max \{1, r_1, \dots,r_m \}$, let ${\mathcal Q}_r(\X)$ be the $r$-truncated quadratic module generated by $g_0, \dots, g_m$:
\begin{align*}
{\mathcal Q}_r(\X) & := \Bigl\{\,\sum_{j=0}^{m} s_j(\x) {g_j} (\x) : s_j \in \Sigma_{r - r_j}[\x], \,  j = 0, \dots, m  \,\Bigr\},
\end{align*}
a convex cone of $\R_{2r}[\x]$.
Let ${\mathcal P}(\X):=\R[\x]\cap{\mathcal C_+}(\X)$ denote the cone of polynomials that are nonnegative on $\X$.
To guarantee the convergence behavior of the relaxations presented in the sequel, we need to ensure that polynomials which are in the interior of ${\mathcal P}(\X)$ lie in ${\mathcal Q}_r(\X)$ for some $r \in \N$. The existence of such SOS-based representations is guaranteed by Putinar's Positivstellensaz (see e.g.~\cite[Theorem~2.2]{HLS09vol}), when the following algebraic compactness condition holds:
\begin{assumption}
\label{hyp:archimedean}
There exists a large enough integer $N$ such that one of the polynomials describing the set $\X$ is equal to $N - \| \x \|_2^2$.
\end{assumption}

In addition, the semialgebraic set $\X$ 
should fulfill the following condition: 
\begin{assumption}
\label{hyp:momb}
The moments~\eqref{momb} of the Lebesgue measure on $\X$ are available analytically.
\end{assumption}
This is the case e.g. if $\X$ is a ball.
From now on we assume that $\X$ is a compact basic semialgebraic set as in~\eqref{eq:defX} and that it satisfies both Assumptions \ref{hyp:archimedean} and \ref{hyp:momb}.

For all $r \in \N$, we set $\N^{n}_r := \{ \beta \in \N^{n} : \sum_{j=1}^{n} \beta_j \leq r \}$, whose cardinality is $\binom{n+r}{r}$.
Then a polynomial $p \in \R_r[\x]$ is written as follows:
\[\x \mapsto p(\x) \,=\,\sum_{\beta \in\N^n_r} \, p_{\beta} \, \x^\beta \:, \]
and $p$ is identified with its vector of coefficients $\p=(p_{\beta})_{\beta\in\N^n_r}$ in the canonical basis $(\x^\beta)_{\beta \in\N^n_r}$.

Given a real sequence $\y =(y_{\beta})_{\beta \in \N^n} \in \R^{\N^n}$, let us define the linear functional $\ell_\y : \R[\x] \to \R$ by $\ell_\y(p) := \sum_{\beta} p_{\beta} y_{\beta}$, for every polynomial $p$. 
For each $j=0,1,\ldots,m$, we associate to $\y$ 
a {localizing matrix}, that is a real symmetric matrix $\M_r(g_j \, \y)$ with rows and columns indexed by $\N_{r-r_j}^{n}$ and the following entrywise definition: 
\[ 
(\M_r(g_j \, \y))_{\beta, \gamma} := \ell_\y(g_j(\x) \, \x^{\beta + \gamma}) \,, \quad
\forall \beta, \gamma \in \N_{r-r_j}^n \,. 
\]
When $j=0$ the localizing matrix is called the moment matrix $\M_r(\y):=\M_{r-r_0}(g_0\,\y)$.

For a given invariant measure $\mu \in \mathcal{M}(\X)$, one has $\Lop(\mu) = 0$. 
{It follows from the Stone-Weierstrass Theorem that monomials are dense in continuous functions on the compact set $\X$. The equation $\Lop(\mu) = 0$ is then equivalent to}
\[
\ell_\y (f(\x)^\alpha) - \ell_\y (\x^\alpha) = 0 \,, \quad \forall \alpha \in \N^n \,,
\]
in the context of a discrete-time system~\eqref{eq:disc} and 
\[
\sum_{i=1}^n \ell_\y \biggl(\frac{\partial (\x^\alpha)}{\partial x_i} f_i(\x) \biggr) = 0 \,, \quad \forall \alpha \in \N^n \,,
\]
in the context of a continuous-time system~\eqref{eq:cont}.

Hence, we introduce the linear functionals $\linfun_\y^\disc: \R[\x] \to \R$ defined by
\[
\linfun_\y^\disc (p) := \ell_\y (p \, \circ f) - \ell_\y (p)
\]
and  $\linfun_\y^\cont : \R[\x] \to \R$ defined by
\[
\linfun_\y^\cont (p) := \ell_\y (\grad\:p \cdot f)
\]
for every polynomial $p$ and where $\grad\:p:=(\frac{\partial p}{\partial x_i})_{i=1,\ldots,n}$.
In the sequel, we use the more concise notation $\linfun_\y$ to refer to $\linfun_\y^\disc$ (resp.~$\linfun_\y^\cont$) in the context of discrete-time (resp.~continuous-time) systems. 

\section{Absolutely Continuous Invariant Measures} 
\label{sec:ac}

For $p=1,2,\ldots$, let $L^p(\X)$ (resp.~$L^p_+(\X)$) be the space of (resp.~nonnegative) Lebesgue integrable functions $f$ on $\X$, i..e. such that $\|f\|_p:=(\int_{\X} \vert f(\x) \vert^p \lambda(d\x))^{1/p} < \infty$.
Let  $L^\infty(\X)$ (resp.~$L^\infty_+(\X)$)  be the space of (resp.~nonnegative) Lebesgue integrable functions $f$ on $\X$ which are essentially bounded on $\X$, i.e. such that $\|f\|_{\infty}:=\text{ess sup}_{x\in\X}|f(x)|<\infty$.
Two integers $p$ and $q$ are said to be conjugate if $1/p + 1/q = 1$, and by Riesz's representation theorem (see e.g.~\cite[Theorem~2.14]{lieb2001analysis}), {the dual space of $L^q(\X)$ for $1 \leq q < \infty$ (i.e. the set of continuous linear functionals on $L^q(\X)$) is isometrically isomorphic to $L^p(\X)$.}

For $\mu \in \mathcal{M}(\X)$, if $\mu \ll \lambda$ then there exists a measurable function $h$ on $\X$ such that $d \mu = h \, d \lambda$ and the function $h$ is called the  {density} of $\mu$. If $h \in L^p(\X)$, by a slight abuse of notation, we write $\mu \in L^p(\X)$ and $\|\mu\|_p:=\|h\|_p$.
If in addition $\mu$ is invariant w.r.t.~$f$ then we say that $f$ has an {invariant density} in $L^p(\X)$.

In Section~\ref{sec:acexists}, we state some conditions fulfilled by the moments of an absolutely continuous measure with a density in $L^p(\X)$. In the case of invariant measures, we rely on these conditions to provide an infinite-dimensional LP characterization in Section~\ref{sec:acconic}. In Section~\ref{sec:acsdp}, we show how to approximate the solution of this LP by using a hierarchy of finite-dimensional SDP relaxations. Section~\ref{sec:acinvdens} is dedicated to the approximation of the invariant density. In Section~\ref{sec:piecewise}, we extend the approach to piecewise-polynomial systems.

\subsection{Invariant Densities in Lebesgue Spaces}
\label{sec:acexists}
\newcommand{\balpha}{{\boldsymbol{\alpha}}}
\newcommand{\bgamma}{{\boldsymbol{\gamma}}}


\begin{theorem}
\label{th:Lpdensity}
Let $p$ and $q$ be conjugate with $1 \leq q < \infty$. Consider a sequence $\y \subset \R$. The following statements are equivalent:
\begin{itemize}
\item[(i)]  $\y$ has a representing measure $\mu \in L^p_+(\X)$ with $ \Vert \mu \Vert_p \leq \gamma < \infty$ for some $\gamma \geq 0$;
\item[(ii)] there exists $\gamma \geq 0$ such that for all $g \in \R[x]$ it holds

\begin{equation} 
\label{eq:dominationAC}
|\ell_{\y}(g)|  \leq \gamma \ell_{\z}(|g|^q)^{1/q}.
\end{equation}
and for all $g \in {\mathcal P}(\X)$, it holds $\ell_{\y}(g) \geq 0$.
\end{itemize}
\end{theorem}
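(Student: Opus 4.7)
The plan is to prove the two implications separately. The direction (i)$\Rightarrow$(ii) is a direct consequence of Hölder's inequality: if $d\mu = h\,d\lambda$ with $h \in L^p_+(\X)$ and $\|h\|_p \leq \gamma$, then for any polynomial $g$,
\[
|\ell_\y(g)| = \Bigl|\int_\X g\,h\,d\lambda\Bigr| \leq \|h\|_p \Bigl(\int_\X |g|^q\,d\lambda\Bigr)^{1/q} = \|h\|_p\,\ell_\z(|g|^q)^{1/q} \leq \gamma\,\ell_\z(|g|^q)^{1/q},
\]
and nonnegativity of $\ell_\y$ on $\mathcal{P}(\X)$ follows immediately from $h \geq 0$ and $g \geq 0$ on $\X$.

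For the harder direction (ii)$\Rightarrow$(i), the strategy is to extend $\ell_\y$ to a continuous linear functional on $L^q(\X)$ and then apply Riesz representation. Since $\X$ is compact, by the Stone-Weierstrass theorem $\R[\x]$ is dense in $\mathcal{C}(\X)$ in the sup-norm; and since $\lambda(\X) < \infty$, $\mathcal{C}(\X)$ is dense in $L^q(\X)$ for $1 \leq q < \infty$. Hence $\R[\x]$ is dense in $L^q(\X)$. The domination inequality~\eqref{eq:dominationAC} then shows that $\ell_\y$ is uniformly continuous in the $L^q$-norm on the subspace of polynomials, so it admits a unique continuous linear extension $\tilde\ell_\y : L^q(\X) \to \R$ of norm at most $\gamma$. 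By the Riesz representation theorem (stated just before the theorem in the excerpt), there exists a unique $h \in L^p(\X)$ with $\|h\|_p = \|\tilde\ell_\y\|_{(L^q)^*} \leq \gamma$ such that $\tilde\ell_\y(\varphi) = \int_\X \varphi\,h\,d\lambda$ for every $\varphi \in L^q(\X)$. Specializing to the monomials $\varphi = \x^\beta$ yields $y_\beta = \int_\X \x^\beta\,h\,d\lambda$, so $\mu := h\,d\lambda$ is a representing measure for $\y$ with the required norm bound.

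It remains to show $h \geq 0$ a.e., which is where the second part of (ii) enters. The key auxiliary fact is that nonnegative polynomials are dense in $L^q_+(\X)$: given $\varphi \in L^q_+(\X)$ and $\varepsilon > 0$, first approximate $\varphi$ by some $\psi \in \mathcal{C}_+(\X)$ within $\varepsilon$ in $L^q$-norm, then use Stone-Weierstrass to find $p \in \R[\x]$ with $\|p-\psi\|_\infty < \varepsilon$; the polynomial $p + \varepsilon$ lies in $\mathcal{P}(\X)$ and is $L^q$-close to $\varphi$. Combined with the hypothesis $\ell_\y(g) \geq 0$ for $g \in \mathcal{P}(\X)$ and continuity of $\tilde\ell_\y$, this gives $\int_\X \varphi\,h\,d\lambda \geq 0$ for every $\varphi \in L^q_+(\X)$. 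Testing against $\varphi = \mathbf{1}_{\{h<0\}}\cdot|h|^{p-1}$ (or, in the case $p = \infty$, against $\mathbf{1}_{\{h<0\}}$), which lies in $L^q_+(\X)$, forces the set $\{h < 0\}$ to have Lebesgue measure zero, hence $h \in L^p_+(\X)$.

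I expect the main subtlety to be the last step, namely upgrading nonnegativity on $\mathcal{P}(\X)$ to nonnegativity of $h$ almost everywhere; the density argument for $\mathcal{P}(\X) \subset L^q_+(\X)$ is elementary but must be stated carefully because $p$ can be $+\infty$ (when $q=1$), in which case the Riesz isomorphism $L^\infty(\X) \simeq L^1(\X)^*$ should be invoked explicitly. Beyond that, the proof is essentially a packaging of Hölder's inequality, Stone-Weierstrass, and Riesz representation, and no deep positivity (Putinar/Positivstellensatz) machinery is needed here.
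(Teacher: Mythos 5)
Your proof is correct, and while the high-level skeleton (extend $\ell_\y$ to a bounded functional on $L^q(\X)$, then apply Riesz representation $L^q(\X)'\cong L^p(\X)$) matches the paper's, the details of the harder direction (ii)$\Rightarrow$(i) are genuinely different in two places. First, the paper begins by invoking the Riesz--Haviland theorem to produce a representing measure $\mu\in\mathcal{M}_+(\X)$, and then separately constructs the $L^q$-extension of $\ell_\y$ via a Hahn--Banach-type theorem of Yosida (which does not require density of the subspace). You instead obtain the extension purely from density of $\R[\x]$ in $L^q(\X)$ (Stone--Weierstrass plus density of $\mathcal{C}(\X)$ in $L^q$) together with the Lipschitz bound from~\eqref{eq:dominationAC}; this is the standard bounded-extension-by-density argument, it gives a \emph{unique} extension, and it dispenses with Riesz--Haviland entirely. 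Second, and more importantly, the paper is terse about why the Riesz representative $h\in L^p(\X)$ is nonnegative: it writes that the dual of $L^q_+(\X)$ ``is'' $L^p_+(\X)$, which implicitly relies on identifying $h\,d\lambda$ with the nonnegative measure from Riesz--Haviland on the dense subspace $\R[\x]\subset\mathcal{C}(\X)$, a step that is not spelled out. You make this step explicit and self-contained by proving that $\mathcal{P}(\X)$ is dense in $L^q_+(\X)$ (approximate in $L^q$ by a nonnegative continuous function, then by Stone--Weierstrass and a $+\varepsilon$ shift) and then testing against $\mathbf{1}_{\{h<0\}}$ (which works uniformly for all $1\le q<\infty$, so you don't even need the separate case distinction with $|h|^{p-1}$). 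Both routes are valid; yours is arguably cleaner, avoids the Riesz--Haviland citation, and localizes the positivity argument where it actually matters. One minor remark in the easy direction: the paper also cites Riesz--Haviland to get $\ell_\y(g)\ge 0$ on $\mathcal{P}(\X)$, but as you correctly observe this is immediate from $h\ge 0$, so no appeal to that theorem is needed.
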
	
\begin{proof}

$(i) \implies (ii)$. Let $h \in L^p_+(\X)$ be the density of $\mu$. By H\"{o}lder's inequality, it follows that for all $g \in \R[\x]$,
\begin{align*}
|\ell_\y(g)| \leq \int_\X |g d\mu| = \int_\X |g|\:h d\lambda = \Vert g h\Vert_1 
\leq \gamma \Vert g \Vert_q = \gamma  \ell_{\z}(|g|^q)^{1/q} \,,
\end{align*}
which proves \eqref{eq:dominationAC}. Moreover, from the Riesz-Haviland Theorem (see e.g.~\cite[Theorem~3.1]{lasserre2009moments}) one has $\ell_{\y}(g) \geq 0$ for all $g \in {\mathcal P}(\X)$.
		
$(i) \impliedby (ii)$. Since $\ell_{\y}(g)\geq 0$ for all $g \in {\mathcal P}(\X)$, we rely again on  the Riesz-Haviland Theorem to show that $\y$ has a representing measure $\mu \in {\mathcal M}_+(\X)$. It remains to prove that $\mu \in L_+^p(\X)$. 		
For this, we use a modified version of \cite[Theorem~2,~p.106]{yosida1980functional}, originally stated for complex linear spaces. This theorem relies on the Hahn-Banach extension Theorem in complex linear spaces stated in \cite[p.105]{yosida1980functional}. One obtains a similar version for real linear spaces by using the Hahn-Banach extension Theorem in real linear spaces \cite[p.102]{yosida1980functional}. 

By the real version of of~\cite[Theorem~2,~p.106]{yosida1980functional} with the notations $i \leftarrow \alpha$, $X \leftarrow L_+^q(\X)$, $\beta_i \leftarrow g_\alpha$, $\alpha_i \leftarrow y_\alpha$, $x_i \leftarrow x^\alpha$, $f \leftarrow \tilde{\ell}_\y$, there exists a linear functional $\tilde{\ell}_\y$ in $L^q(\X)$ such that $\tilde{\ell}_\y(\x^\alpha) = y_\alpha$ for all $\alpha \in \N^n$, its operator norm $\Vert \cdot \Vert$ being bounded by a nonnegative real constant  $\gamma$. Moreover, the restriction of $\tilde{\ell}_\y$ to $\R[\x]$ is $\ell_\y$ hence $\Vert \ell_\y \Vert \leq \Vert \tilde{\ell}_\y \Vert \leq \gamma$. Since the dual space of $L_+^q(\X)$ is isometrically isomorphic to $L_+^p(\X)$, there exists $\mu \in L_+^p(\X)$ such that $\Vert \tilde{\ell}_\y \Vert = \Vert \mu \Vert_p \leq \gamma$ and satisfying  $ \tilde{\ell}_\y(g) = \ell_\y(g) = \int_\X g d\mu$, for all $g \in \R[\x]$. 
\end{proof}
Theorem~\ref{th:Lpdensity} provides necessary and sufficient conditions satisfied by the moments of an absolutely continuous Borel measure with a density in $L^p_+(\X)$. We now state further characterizations when $p = q = 2$ in Theorem~\ref{th:L2} and  when $p= \infty$ and $q=1$ in Theorem~\ref{th:Linfty}. For a given sequence $\y = (y_\alpha)_\alpha$ and $r \in \N$, the notation $\y^r$ stands for the truncated sequence $(y_\alpha)_{|\alpha| \leq 2 r}$. The notation $\succeq 0$ means positive semidefinite.
\begin{theorem}
\label{th:L2}
Consider a sequence $\y \subset \R$.
The following statements are equivalent:
\begin{itemize}
\item[(i)] $\y$ has a representing measure $\mu \in L^2_+(\X)$ with $\Vert \mu \Vert_2 \leq \gamma < \infty$ for some $\gamma \geq 0$;
\item[(ii)] there exists $\gamma \geq 0$ such that for all $r \in \N$:
\begin{align}
\label{eq:L2sdp}
\begin{pmatrix}
 \M_r(\z) & \y^{r} \\
 (\y^r)^T & \gamma^2
\end{pmatrix}
\succeq 0
\end{align}
and 
\begin{align}
\label{eq:matlocsdp}
\M_{r-r_j}(g_j \y) \succeq 0 \,, \quad j=0,1,\dots,m \,.
\end{align}
\end{itemize}
\end{theorem}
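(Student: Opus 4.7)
The plan is to reduce Theorem~\ref{th:L2} to Theorem~\ref{th:Lpdensity} specialised at $p=q=2$, by checking that the SDP conditions \eqref{eq:L2sdp}--\eqref{eq:matlocsdp}, taken for every $r \in \N$, are exact algebraic translations of the two polynomial conditions of Theorem~\ref{th:Lpdensity}~(ii). The key point making the case $q=2$ so clean is that $|g|^q = g^2$ is still a polynomial, so the dominance inequality \eqref{eq:dominationAC} becomes a quadratic form inequality amenable to a Schur complement treatment.

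First, I would record the coordinate identities. For a polynomial $g \in \R_r[\x]$ identified with its coefficient vector $\g$, one has $\ell_\y(g) = \g^T \y^r$ and $\ell_\z(g^2) = \g^T \M_r(\z)\g$, while for any SOS polynomial $\sigma = \tau^2 \in \Sigma_{r-r_j}[\x]$ with coefficient vector $\boldsymbol{\tau}$ one has $\ell_\y(g_j\sigma) = \boldsymbol{\tau}^T \M_{r-r_j}(g_j\y)\boldsymbol{\tau}$. Since $\M_r(\z) \succeq 0$ (as $\z$ is the moment sequence of the positive Lebesgue measure on $\X$), a Schur complement argument shows that \eqref{eq:L2sdp} is equivalent to $(\g^T \y^r)^2 \le \gamma^2 \, \g^T \M_r(\z)\g$ for every $\g$; collecting these over $r \in \N$ this is exactly $|\ell_\y(g)|^2 \le \gamma^2 \ell_\z(g^2)$ for every $g \in \R[\x]$, i.e. \eqref{eq:dominationAC} with $q=2$. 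Likewise, \eqref{eq:matlocsdp} is equivalent to $\ell_\y(g_j \sigma) \ge 0$ for every $\sigma \in \Sigma[\x]$ of admissible degree and every $j=0,\ldots,m$.

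For the direction $(i) \Rightarrow (ii)$, Theorem~\ref{th:Lpdensity} furnishes both \eqref{eq:dominationAC} and $\ell_\y(g) \ge 0$ on $\mathcal{P}(\X)$. The first, via the identities above, is \eqref{eq:L2sdp}, and the second applied to $g = g_j \sigma$ (which lies in $\mathcal{P}(\X)$ since $g_j \ge 0$ on $\X$ and $\sigma$ is SOS) yields \eqref{eq:matlocsdp}. For the reverse direction $(ii) \Rightarrow (i)$, one has to recover the two conditions of Theorem~\ref{th:Lpdensity}~(ii). The dominance inequality is immediate from the same identities and Schur complement. The nontrivial step is the positivity $\ell_\y(g) \ge 0$ for every $g \in \mathcal{P}(\X)$: given $\epsilon > 0$, the polynomial $g + \epsilon$ is strictly positive on $\X$, hence by Putinar's Positivstellensatz (valid under Assumption~\ref{hyp:archimedean}) admits a representation $g + \epsilon = \sum_{j=0}^m \sigma_j g_j$ with $\sigma_j \in \Sigma[\x]$; therefore $\ell_\y(g) + \epsilon\, y_0 = \sum_j \ell_\y(\sigma_j g_j) \ge 0$ by \eqref{eq:matlocsdp}. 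Since $y_0$ is finite---the $r=0$ instance of \eqref{eq:L2sdp} yields $y_0^2 \le \gamma^2 z_0$ with $z_0 = \vol \X < \infty$---letting $\epsilon \to 0$ gives $\ell_\y(g) \ge 0$, and Theorem~\ref{th:Lpdensity} then produces $\mu \in L^2_+(\X)$ with $\|\mu\|_2 \le \gamma$.

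The principal obstacle is precisely this last limiting step. Putinar only outputs SOS-weighted decompositions for polynomials strictly positive on $\X$, so extracting nonnegativity of $\ell_\y$ on the whole cone $\mathcal{P}(\X)$ from the truncated localizing conditions \eqref{eq:matlocsdp} requires both an $\epsilon$-perturbation and a uniform control on $y_0$ afforded by the bordered block condition at $r=0$; Assumption~\ref{hyp:archimedean} is essential here. Everything else amounts to routine matching of the SDP data with the polynomial testing functionals of Theorem~\ref{th:Lpdensity}.
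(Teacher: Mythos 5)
Your proof is correct and follows essentially the same route as the paper: both reduce to Theorem~\ref{th:Lpdensity} at $p=q=2$, translate the bordered moment condition \eqref{eq:L2sdp} into the domination inequality via a Schur complement, and use Putinar's Positivstellensatz under Assumption~\ref{hyp:archimedean} to handle \eqref{eq:matlocsdp}. The only cosmetic difference is that you unfold the Putinar step into an explicit $\epsilon$-perturbation argument to get $\ell_\y\ge 0$ on $\mathcal{P}(\X)$, whereas the paper cites the moment-theoretic form of Putinar's theorem directly to obtain a representing measure in $\mathcal{M}_+(\X)$; the two are equivalent.
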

\begin{proof}
$(i) \implies (ii)$. By using the first necessary condition of Theorem~\ref{th:Lpdensity}, there exists $\gamma \geq 0$ such that $\ell_\y(g) \leq \gamma \ell_\z(g^2)^{1/2}$ for all $g \in \R_{r}[\x]$. 
Let $r \in \N$ and choose an arbitrary $g \in \R_{r}[\x]$ with vector of coefficients $\g$. Thus, one has $\ell_\y(g)^2 \leq \gamma^2 \ell_\z(g^2)$, yielding $(\g^T \y)^2 \leq \gamma^2 \int_\X g^2(\x) d \lambda$. Therefore this implies $\g^T \y^r (\y^r)^T \g \leq \gamma^2 \g^T \M_r(\z) \g$. We obtain~\eqref{eq:L2sdp} by using a Schur complement. We prove that~\eqref{eq:matlocsdp} holds in a similar way by using the second necessary condition of Theorem~\ref{th:Lpdensity}.

$(i) \impliedby (ii)$. Since Assumption~\ref{hyp:archimedean} holds, one can apply Putinar's Positivstellensatz~\cite[Theorem~2.2]{HLS09vol} to prove that~\eqref{eq:matlocsdp} implies that $\y$ has a representing measure $\mu \in {\mathcal M}_+(\X)$. As above, we show that~\eqref{eq:L2sdp} implies that $|\ell_\y(g)| \leq \gamma \ell_\z(g^2)^{1/2}$ for all $g \in \R_{r}[\x]$. We conclude the proof by using the sufficient condition of Theorem~\ref{th:Lpdensity}.
\end{proof}
\begin{theorem}
\label{th:Linfty}
Consider a sequence $\y \in \R$. The following statements are equivalent:
\begin{itemize}
\item[(i)] $\y$ has a representing measure $\mu \in L^\infty_+(\X)$ with $\Vert \mu \Vert_\infty \leq \gamma$ for some $\gamma \geq 0$;
\item[(ii)] there exists $\gamma \geq 0$ such that for all $r \in \N$:
\begin{align}
\label{eq:Linftysdp}
 \gamma \M_r(\z) \succeq \M_r(\y)
\end{align}
\begin{align}
\label{eq:matlocsdp2}
\M_{r-r_j}(g_j \y) \succeq 0 \,, \quad j=0,1,\dots,m \,.
\end{align}
\end{itemize}
\end{theorem}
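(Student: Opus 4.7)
The plan is to mirror the structure of the proof of Theorem~\ref{th:L2}, using Theorem~\ref{th:Lpdensity} now with the conjugate pair $(p,q) = (\infty, 1)$ so that the integral domination in Theorem~\ref{th:Lpdensity}(ii) reads $|\ell_\y(g)| \leq \gamma \ell_\z(|g|)$ for all $g \in \R[\x]$, and the nonnegativity condition $\ell_\y(g) \geq 0$ for all $g \in \mathcal{P}(\X)$ remains unchanged.

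For the necessary direction $(i) \Rightarrow (ii)$, the localizing condition~\eqref{eq:matlocsdp2} follows from positivity of $\mu$ on $\X$ exactly as in Theorem~\ref{th:L2}: for every polynomial $g$ of degree at most $r - r_j$ with coefficient vector $\g$, $\g^T \M_{r-r_j}(g_j\y)\g = \int_\X g_j g^2 \, d\mu \geq 0$ since $g_j \geq 0$ on $\X$. The dominance~\eqref{eq:Linftysdp} is obtained directly from the essential upper bound on the density $h$ of $\mu$: for any polynomial $g \in \R_r[\x]$,
$$\g^T \M_r(\y) \g \,=\, \int_\X g^2 h \, d\lambda \,\leq\, \gamma \int_\X g^2 \, d\lambda \,=\, \g^T(\gamma \M_r(\z))\,\g,$$
which is $\gamma \M_r(\z) \succeq \M_r(\y)$.

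For the sufficient direction $(ii) \Rightarrow (i)$, I would first invoke Assumption~\ref{hyp:archimedean} together with~\eqref{eq:matlocsdp2} and Putinar's Positivstellensatz (as in Theorem~\ref{th:L2}) to produce a representing measure $\mu \in \mathcal{M}_+(\X)$ for $\y$. The SDP condition~\eqref{eq:Linftysdp} then translates into the polynomial inequality $\int_\X g^2 \, d\mu \leq \gamma \int_\X g^2 \, d\lambda$ for every polynomial $g$. The key step is to upgrade this from polynomial squares to all nonnegative continuous functions on $\X$: given $\varphi \in \mathcal{C}_+(\X)$, $\sqrt{\varphi}$ is continuous on the compact set $\X$, so by the Stone-Weierstrass theorem it is a uniform limit of polynomials $p_k$; hence $p_k^2 \to \varphi$ uniformly and the inequality passes to the limit, giving $\int \varphi \, d\mu \leq \gamma \int \varphi \, d\lambda$. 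By outer regularity of the Radon measures $\mu$ and $\lambda$ together with lower semicontinuity of indicator functions of open sets (approximated from below by continuous nonnegative functions via monotone convergence), this extends to $\mu(A) \leq \gamma \lambda(A)$ for every Borel set $A \subseteq \X$. Therefore $\mu \ll \lambda$ with Radon-Nikodym derivative $h = d\mu / d\lambda$ satisfying $h \leq \gamma$ almost everywhere, i.e., $\mu \in L^\infty_+(\X)$ with $\|\mu\|_\infty \leq \gamma$.

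The main obstacle is precisely this approximation step in the sufficiency direction: the SDP inequality a priori only controls $\mu$ against squares of polynomials, whereas the conclusion requires the pointwise domination $\mu \leq \gamma \lambda$. The square-root trick combined with compactness of $\X$ does the job, but it is the nontrivial bridge that was implicit in Theorem~\ref{th:L2} (where the $L^2$ conclusion was already phrased in a quadratic form) and now becomes explicit here.
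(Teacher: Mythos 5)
Your proof is correct and follows essentially the same route as the paper: derive~\eqref{eq:Linftysdp} from the bound on the density applied to squares of polynomials, and in the converse direction combine Putinar's Positivstellensatz with the observation that the matrix inequality forces $\mu \leq \gamma\lambda_\X$. The only difference is cosmetic: where the paper cites~\cite[Lemma~2.4]{HLS09vol} for the domination $\mu \leq \gamma\lambda_\X$, you spell out that lemma's Stone--Weierstrass/square-root argument explicitly, which is exactly what that reference does.
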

\begin{proof}

$(i) \implies (ii)$. By using the first necessary condition of Theorem~\ref{th:Lpdensity}, there exists a real $\gamma \geq 0$ such that $(i)$ implies that $|\ell_\y(g^2)| \leq \gamma \ell_\z(g^2)$, for all $g \in \R_{r}[\x]$ with vector of coefficients $\g$. This implies that $\int_\X g(\x)^2 d \mu \leq \gamma \int_\X g(\x)^2 d \lambda$. Since $\int_\X g^2 d \mu = \g^T \M_r(\y) \g$ and $\int_\X g^2 d \lambda = \g^T \M_r(\z) \g$, this shows that $\gamma M_r(\z) \succeq M_r(\y)$. The remaining inequalities are proved as in Theorem~\ref{th:L2}.

$(i) \impliedby (ii)$. As in Theorem~\ref{th:L2}, $\y$ has a representing measure $\mu \in {\mathcal M}_+(\X)$. 
Since~\eqref{eq:Linftysdp} holds, we prove as in~\cite[Lemma 2.4]{HLS09vol} that $\mu \leq \gamma \lambda_\X$ which implies that $\mu \in L^\infty(\X)$ with $\Vert \mu \Vert_\infty \leq \gamma$.
\end{proof}
%

%
In Section~\ref{sec:acsdp}, we will restrict to the case where $p=2$ or $p = \infty$ while relying on the characterizations stated in the two previous theorems.

\subsection{Infinite-dimensional Conic Formulation}
\label{sec:acconic}

Let us consider the following infinite-dimensional conic program:
\begin{equation}
\label{eq:conicac}
\begin{aligned}
\rho^\star_\ac := \sup\limits_{\mu} \quad  & \int_\X  \mu  \\		
\text{s.t.} 
\quad & \Lop(\mu) = 0 \,,\\
\quad & \|\mu\|_p \leq 1 \,,\\
\quad & \mu \in  L^p_+(\X) \,.\\
\end{aligned}
\end{equation}
%

\begin{theorem}
\label{th:lpacprelim}
Problem~\eqref{eq:conicac} admits an optimal solution. If the optimal value $\rho^\star_\ac$ is positive, then the optimal solution is a nonzero invariant measure. 
\end{theorem}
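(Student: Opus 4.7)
The plan is to use a standard direct method of the calculus of variations on the Lebesgue space $L^p(\X)$. First I would note that the feasible set is non-empty (the zero measure is feasible with objective $0$, so $\rho^\star_\ac \geq 0$) and that the objective is bounded above: by H\"older's inequality, any feasible $\mu$ with density $h$ satisfies
\[
\int_\X \mu = \int_\X h\,d\lambda \leq \|h\|_p \,(\vol \X)^{1/q} \leq (\vol \X)^{1/q},
\]
so $\rho^\star_\ac$ is finite. Take a maximizing sequence $(\mu_k) \subset L^p_+(\X)$ with $\|\mu_k\|_p \leq 1$, $\Lop(\mu_k)=0$, and $\int_\X \mu_k \to \rho^\star_\ac$.

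Next I would extract a convergent subsequence. Since $\X$ is compact and we focus on $p=2$ (reflexive) or $p=\infty$ (dual of the separable space $L^1(\X)$), the closed unit ball of $L^p(\X)$ is weakly compact (Banach--Alaoglu for $p=\infty$, weak sequential compactness for $p=2$). Hence, up to a subsequence, $\mu_k \rightharpoonup \mu^\star$ weakly (resp.~weak-$\star$) in $L^p(\X)$ for some $\mu^\star \in L^p(\X)$ with $\|\mu^\star\|_p \leq \liminf_k \|\mu_k\|_p \leq 1$ by lower semi-continuity of the norm.

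Then I would check that $\mu^\star$ is feasible. Non-negativity is preserved under weak convergence because the cone $L^p_+(\X)$ is convex and strongly closed, hence weakly closed. For the invariance constraint, I would rewrite $\Lop(\mu_k)=0$ as a family of linear conditions tested against continuous (resp.~$\mathcal{C}^1$) functions on the compact set $\X$:
\[
\int_\X (\phi\circ f)\,h_k\,d\lambda = \int_\X \phi\, h_k\,d\lambda\quad \forall\phi\in\mathcal{C}(\X)
\quad\text{or}\quad \int_\X (\grad\phi\cdot f)\,h_k\,d\lambda = 0\quad \forall\phi\in\mathcal{C}^1(\X),
\]
depending on whether $\Lop = \Lop_f^\disc$ or $\Lop_f^\cont$. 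Since $\X$ is compact, each test function (and its composition or product with $f$) lies in $L^q(\X)$, so the above are evaluations of the weak(-$\star$) topology; passing to the limit along the subsequence gives $\Lop(\mu^\star)=0$. Finally, the objective $\mu \mapsto \int_\X \mu = \int_\X 1\cdot\mu$ is precisely pairing with $1\in L^q(\X)$, hence weakly (resp.~weak-$\star$) continuous, which yields $\int_\X \mu^\star = \lim_k \int_\X \mu_k = \rho^\star_\ac$. Therefore $\mu^\star$ is an optimal solution. When $\rho^\star_\ac>0$, we have $\int_\X \mu^\star>0$, so $\mu^\star\neq 0$, and it is invariant by construction.

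\textbf{Main obstacle.} The only delicate point is the weak(-$\star$) closedness of the invariance constraint $\Lop(\mu)=0$, since $\Lop$ is originally defined on $\mathcal{C}(\X)'$ or $\mathcal{C}^1(\X)'$. The key observation making it routine is that, because $\X$ is compact, continuous and continuously differentiable test functions embed into $L^q(\X)$, so the measure-theoretic constraint $\Lop(\mu)=0$ can be recast as a collection of linear equalities in the $L^p$--$L^q$ duality, which is exactly the topology providing compactness; beyond this, every step reduces to standard reflexivity / Banach--Alaoglu arguments and lower semi-continuity of norms.
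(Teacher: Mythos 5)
Your proof is correct and follows the same direct-method strategy as the paper: feasibility of the zero measure, extraction of a convergent subsequence from a maximizing sequence by weak compactness, and verification that the limit is feasible and optimal. The difference lies in the ambient space and topology. The paper argues in $\mathcal{M}(\X)$ under the weak-star topology (duality against $\mathcal{C}(\X)$), relying on the TV bound and weak-star compactness of the unit ball of $\mathcal{M}(\X)$. You instead work directly in $L^p(\X)$ and exploit weak sequential compactness of the unit ball (reflexivity for $p=2$, Banach--Alaoglu with separability of $L^1$ for $p=\infty$). Your route is in fact tighter: the paper's identity $\|\mu_n\|_{\mathrm{TV}} = \|\mu_n\|_p$ only holds for $p=1$ (Hölder gives the needed bound, as you note), and the paper's assertions that $\Lop(\mu_n)(\A) \to \Lop(\mu)(\A)$ for every Borel $\A$ and that $\|\mu\|_p \leq 1$ passes to the weak-star limit are stated without justification, whereas you make these precise by recasting $\Lop(\mu)=0$ as linear conditions tested against $\mathcal{C}(\X)$ (resp.\ $\mathcal{C}^1(\X)$) embedded in $L^q(\X)$, and by invoking lower semicontinuity of the $L^p$ norm under weak(-star) convergence in $L^p$. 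One caveat: you restrict to $p \in \{2,\infty\}$, which matches the paper's later usage but is formally narrower than the arbitrary $p$ appearing in the theorem; for $p \in (1,\infty)$ the same reflexivity argument applies verbatim, while $p=1$ would genuinely fail (the weak-star limit in $\mathcal{M}(\X)$ need not remain absolutely continuous), a gap present in the paper's proof as well.
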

\begin{proof}
First, let us prove that the feasible set of problem~\eqref{eq:conicac} is nonempty and weak-star compact. 
Nonemptiness follows from the fact the zero measure is admissible. Now, let us consider an admissible sequence $(\mu_n)_{n\in\N}$. One has $\Vert \mu_n \Vert_\tv  = \Vert \mu_n \Vert_p \leq 1  < \infty$, which shows that the feasible set of problem~\eqref{eq:conicac} is bounded for the weak-star topology. 
Now, let us assume that the sequence converges weakly-star to $\mu$. For all $\A \in \mathcal{B}(\X)$, one has $0 = \Lop(\mu_n)(\A) \to \Lop(\mu)(\A)$ as $n$ tends to infinity. 
In addition, one has $\Vert \mu_n \Vert_p \leq 1$ which yields $\Vert \mu \Vert_p \leq 1$ as $n$ tends to infinity. Thus $\mu$ is feasible for problem~\eqref{eq:conicac}, which proves that the feasible set of problem~\eqref{eq:conicac} is closed in the metric induced by the weak-star topology. 
This proves that this feasible set is weak-star compact. Problem~\eqref{eq:conicac} has a linear cost function and a weak-star compact feasible set, which implies the existence of an optimal solution. The proof of the second statement is straighforward.
\end{proof}
\begin{assumption}
\label{hyp:invac}
There exists a unique invariant probability measure $\mu_{\ac} \in L^p(\X)$ for some $p \geq 1$.
\end{assumption}
{Note that Assumption~\ref{hyp:invac} is equivalent to supposing that there exists a unique ergodic probability measure.}
\begin{theorem}
\label{th:lpac}
If Assumption~\ref{hyp:invac} holds, then problem~\eqref{eq:conicac} admits a unique optimal solution $\mu_{\ac}^\star := \rho^\star_\ac \, \mu_{\ac}$.
\end{theorem}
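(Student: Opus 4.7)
The plan is to show that under Assumption~\ref{hyp:invac} the feasible set of~\eqref{eq:conicac} is (up to scalar multiples by numbers in $[0,1/\|\mu_{\ac}\|_p]$) the positive ray generated by $\mu_{\ac}$, from which the announced optimum follows immediately. The argument is structural and relies on Theorem~\ref{th:lpacprelim} together with linearity of $\Lop$.

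\textbf{Step 1: Positivity of the optimal value.} First I would exhibit a strictly positive feasible objective by considering the normalized measure $\tilde{\mu} := \mu_{\ac}/\|\mu_{\ac}\|_p$. Since $\mu_{\ac} \in L^p_+(\X)$ and $\Lop$ is linear, $\tilde{\mu} \in L^p_+(\X)$ with $\|\tilde{\mu}\|_p = 1$ and $\Lop(\tilde{\mu}) = \Lop(\mu_{\ac})/\|\mu_{\ac}\|_p = 0$. Hence $\tilde{\mu}$ is feasible, giving $\rho^\star_\ac \geq \int_\X \tilde\mu = 1/\|\mu_{\ac}\|_p > 0$.

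\textbf{Step 2: Identification of any optimizer.} By Theorem~\ref{th:lpacprelim}, problem~\eqref{eq:conicac} admits an optimal solution $\mu^\star$, and since $\rho^\star_\ac > 0$ this $\mu^\star$ is a nonzero invariant positive measure with $\mu^\star(\X) = \rho^\star_\ac > 0$. I would then rescale: the measure $\nu := \mu^\star/\mu^\star(\X)$ satisfies $\nu(\X) = 1$, lies in $L^p_+(\X)$ (as $L^p_+(\X)$ is a cone), and $\Lop(\nu) = \Lop(\mu^\star)/\mu^\star(\X) = 0$ by linearity. Thus $\nu$ is an invariant probability measure in $L^p(\X)$, and Assumption~\ref{hyp:invac} forces $\nu = \mu_{\ac}$, i.e.\ $\mu^\star = \rho^\star_\ac \, \mu_{\ac}$.

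\textbf{Step 3: Uniqueness and closed form for $\rho^\star_\ac$.} The derivation of Step~2 is independent of the particular optimizer chosen, so any optimal solution of~\eqref{eq:conicac} equals $\rho^\star_\ac \,\mu_{\ac}$, which proves uniqueness. Moreover, feasibility of $\mu^\star$ requires $\rho^\star_\ac \|\mu_{\ac}\|_p = \|\mu^\star\|_p \leq 1$, and equality must hold: otherwise, slightly scaling up $\mu^\star$ would produce a feasible measure with strictly larger objective, contradicting optimality. One thus obtains the explicit value $\rho^\star_\ac = 1/\|\mu_{\ac}\|_p$, which can be recorded as a byproduct of the proof.

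No step looks truly delicate; the argument is essentially a one-dimensional reduction: the cone of invariant positive $L^p$ measures is by assumption a single positive ray, and optimizing any continuous increasing functional over the intersection of this ray with the unit $L^p$-ball selects the endpoint. The only point requiring a bit of care is verifying compatibility of the rescalings with the three constraints (positivity/$L^p$-membership, norm bound, and invariance), all of which follow from the cone/homogeneity/linearity structure invoked above.
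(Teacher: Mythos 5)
Your proof is correct and follows essentially the same path as the paper: exhibit the normalized $\mu_{\ac}/\|\mu_{\ac}\|_p$ as a feasible point to show $\rho^\star_\ac>0$, invoke Theorem~\ref{th:lpacprelim} for existence of an optimizer, then rescale that optimizer to an invariant probability measure and apply the uniqueness in Assumption~\ref{hyp:invac}. Your Step~3, pinning down $\rho^\star_\ac = 1/\|\mu_{\ac}\|_p$, is a small correct byproduct that the paper leaves implicit.
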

\begin{proof}
\if{
First, let us prove that the feasible set of problem~\eqref{eq:conicac} is nonempty and weak-star compact. 
Nonemptiness follows from the fact the zero measure is admissible. Now, let us consider an admissible sequence $(\mu_n)_{n\in\N}$. One has $\Vert \mu_n \Vert_\tv  = \Vert \mu_n \Vert_p \leq 1  < \infty$, which shows that the feasible set of problem~\eqref{eq:conicac} is bounded for the weak-star topology. 
Now, let us assume that the sequence converges weakly-star to $\mu$. For all $\A \in \mathcal{B}(\X)$, one has $0 = \Lop(\mu_n)(\A) \to \Lop(\mu)(\A)$ as $n$ tends to infinity. 
In addition, one has $\Vert \mu_n \Vert_p \leq 1$ which yields $\Vert \mu \Vert_p \leq 1$ as $n$ tends to infinity. Thus $\mu$ is feasible for problem~\eqref{eq:conicac}, which proves that the feasible set of problem~\eqref{eq:conicac} is closed in the metric induced by the weak-star topology. 
This proves that this feasible set is weak-star compact. Problem~\eqref{eq:conicac} has a linear cost function and a weak-star compact feasible set, which implies the existence of an optimal solution.
}\fi

If Assumption~\ref{hyp:invac} holds, then the nonzero invariant measure $\mu_{\ac} / \Vert \mu_{\ac} \Vert_p$ is feasible for problem~\eqref{eq:conicac}, which proves that $\rho_\ac^\star \geq \int_\X \mu_{\ac} / \Vert \mu_{\ac} \Vert_p =  1 / \Vert \mu_{\ac} \Vert_p > 0$.
Finally, let $\mu_\ac^\star$ be an optimal solution of problem~\eqref{eq:conicac}, yielding the optimal value $\rho_\ac^\star = \int_{\X} \mu_\ac^\star$. Then, the measure $(\rho_\ac^\star)^{-1} \, \mu_\ac^\star$ is an invariant probability measure, ensured to be unique from Assumption~\ref{hyp:invac}, which concludes the proof.

\end{proof}

The choice of maximizing the mass of the invariant measure in problem~\eqref{eq:conicac} is motivated by the following reasons:
\begin{itemize}
\item If we consider to solve only the feasibility constraints associated to problem~\eqref{eq:conicac}, one could end up with a solution being the zero measure, even under Assumption~\ref{hyp:invac}.
\item Enforcing the feasibility constraints by adding the condition for $\mu$ to be a probability measure (i.e.~$\int_\X \mu = \Vert \mu \Vert_1 = 1$) would not provide any guarantee to obtain a feasible solution as the inequality constraints $\Vert \mu \Vert_p \leq 1$ may not be fulfilled since $\Vert \mu \Vert_1 \leq \vol \X \Vert \mu \Vert_p \leq \Vert \mu \Vert_p$ when $\mu \in L^p(\X)$ for some $p \geq 1$.
\end{itemize}

\subsection{A Hierarchy of SDP Relaxations}
\label{sec:acsdp}

Let
\[
\Cb^2_r(\y):= \left(\begin{matrix}
 \M_r(\z) & \y^r \\
 (\y^r)^T & 1
\end{matrix}\right), \quad
\Cb^{\infty}_r(\y):=\M_r(\z) - \M_r(\y),
\]
and from now on, let $p=2$ or $p=\infty$ and $r \in \N$ be fixed, with $r \geq r_{\min}$. We build the following hierarchy of finite-dimensional semidefinite programming (SDP) relaxations for problem~\eqref{eq:conicac}:
\begin{equation}
\label{eq:sdpac}
\begin{aligned}
\rho^r_\ac := \sup\limits_{\y} \quad & y_{0} \\			
\text{s.t.} 
\quad & \linfun_{\y}(\x^\alpha) = 0 \,,
\quad \forall \alpha \in \N_{2 r}^n \,, \\
\quad &  \Cb_r^p(\y) \succeq 0 \,, \\
\quad & \M_{r - r_j} (g_j\, \y) \succeq 0, \quad j = 0,1,\dots, m \,.
\end{aligned}
\end{equation}

\begin{lemma}
\label{th:ACInvariantMeasure}
\label{th:gapac}
Problem~\eqref{eq:sdpac} has a compact feasible set and an optimal solution $\y^r$.
\end{lemma}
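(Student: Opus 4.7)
The plan is to verify that the feasible set of the SDP~\eqref{eq:sdpac} is nonempty, closed, and bounded, and then invoke the fact that a continuous (linear) function attains its maximum over a nonempty compact set.

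First, I would check nonemptiness directly: the zero sequence $\y = 0$ satisfies all the linear equality constraints trivially, all localizing matrices $\M_{r-r_j}(g_j\,\y)$ become zero (hence positive semidefinite), and $\Cb_r^2(0) = \begin{pmatrix}\M_r(\z) & 0 \\ 0 & 1\end{pmatrix}\succeq 0$ and $\Cb_r^\infty(0) = \M_r(\z) \succeq 0$ hold since $\M_r(\z)$ is the moment matrix of the Lebesgue measure on the compact set $\X$. Closedness is immediate because each constraint is either a linear equality in $\y$ or a linear matrix inequality in $\y$, and the PSD cone is closed.

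The substantive step is boundedness. The key is to first extract a bound on $y_0$ from the constraint $\Cb_r^p(\y) \succeq 0$. When $p=2$, testing the PSD condition on the canonical vector corresponding to the constant monomial $1$ together with the scalar entry $1$ yields $y_0^2 \leq z_0 = \vol \X$. When $p=\infty$, the diagonal entry at index $0$ of $\M_r(\z) - \M_r(\y) \succeq 0$ combined with $\M_r(\y) \succeq 0$ gives $0 \leq y_0 \leq z_0$. Once $y_0$ is controlled, I would use Assumption~\ref{hyp:archimedean}: since one of the $g_j$ equals $N - \|\x\|_2^2$, the corresponding localizing matrix constraint $\M_{r-r_j}((N-\|\x\|_2^2)\,\y)\succeq 0$ together with $\M_r(\y)\succeq 0$ gives, via the standard bootstrapping argument in the truncated moment problem (see e.g.\ \cite[Lemma 4.3]{lasserre2009moments}), a uniform bound $|y_\alpha| \leq C(N,r)\,y_0$ for all $|\alpha|\leq 2r$. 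This yields boundedness of the feasible set in the finite-dimensional space $\R^{\N^n_{2r}}$.

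Combining these three properties, the feasible set is a nonempty compact subset of $\R^{\N^n_{2r}}$, and since the objective $y_0$ is linear (hence continuous), it attains its maximum at some feasible point $\y^r$. The main technical obstacle is the boundedness step: one must invoke the Archimedean condition carefully to propagate the $y_0$-bound into bounds on all higher-order moments, rather than attempting to read boundedness directly off $\Cb_r^p(\y)\succeq 0$, which only directly controls the block of moments indexed by $\N_r^n$ appearing in that matrix.
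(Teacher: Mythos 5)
Your proposal is correct and follows the same overall template as the paper (nonempty plus closed plus bounded feasible set, hence compact, hence the linear objective attains its maximum), but it handles the boundedness step differently — and more carefully — than the paper does. The paper reads the bounds on $y_\alpha$ directly off the diagonal of $\Cb_r^p(\y)$: for $p=\infty$ it notes $0 \le y_{2\beta} \le z_{2\beta}$ from the diagonals of $\M_r(\y)$ and $\M_r(\z)-\M_r(\y)$, and for $p=2$ it tests $\y^r(\y^r)^T \preceq \M_r(\z)$ against the monomial basis to get $y_\alpha^2 \le z_{2\alpha}$. As you correctly anticipate in your final sentence, this only directly controls the moments indexed by $\N^n_r$ (for $p=2$) or the even-indexed moments (for $p=\infty$); the paper does not spell out how the remaining entries of the truncated sequence are bounded. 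For $p=\infty$ this is quickly repaired by the standard estimate $|y_{\beta+\gamma}| \le \sqrt{y_{2\beta}y_{2\gamma}}$ for the positive semidefinite $\M_r(\y)$, but for $p=2$ the direct $\Cb_r^2$-bound genuinely stops at degree $r$, so something extra is needed. Your route — bound only $y_0$ from $\Cb_r^p(\y)\succeq 0$, then invoke the Archimedean localizing constraint $\M_{r-1}((N-\|\x\|_2^2)\,\y)\succeq 0$ together with the Lasserre--Netzer bootstrapping lemma to push this to all moments of degree $\le 2r$ — is exactly the mechanism the paper itself uses later (in the proof of Theorem~4.4) and cleanly covers both $p=2$ and $p=\infty$ at once. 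So your version is not merely equivalent; it supplies the missing propagation argument and is the more complete proof.
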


\begin{proof}
First, let us note that the zero sequence is feasible for SDP~\eqref{eq:sdpac}. Each diagonal element of $\M_r(\y)$ has to be nonnegative, thus $y_\alpha \geq 0$ for $\alpha\in\N^n_{2r}$. Let $\tau := \max_{\alpha \in \N_{2 r}^n} |z_\alpha|$.
For all $\alpha \in \N_{2 r}^n$, from the constraints $\Cb_r^p(\y) \succeq 0$, we consider the two following cases.
\begin{itemize}
\item For $p=2$, one has $\g^T \y \y^T \g \leq \g^T \M_r(\z)\g$, for each $g \in \R_r[\x]$ with vector of coefficients $\g$. In particular for $g(\x) = \x^\alpha$, one obtains $y_\alpha^2 \leq z_{2 \alpha} \leq \tau$.
\item For $p=\infty$, each diagonal element of $\M_r(\z) - \M_r(\y)$ has to be nonnegative, yielding $0 \leq y_\alpha \leq z_\alpha \leq \tau$. 
\end{itemize}
Therefore, this implies that  the feasible set of problem~\eqref{eq:sdpac} is nonempty and compact, ensuring the existence of an optimal solution $\y^r$. 
\end{proof}
Let us denote by $\R[\x]'$ the dual set of $\R[\x]$, i.e., the linear functionals acting on $\R[\x]$.
\begin{lemma}
\label{th:sdpac}
Let Assumption~\ref{hyp:invac} hold and let $\mu_\ac^\star$ be the unique optimal solution of problem~\eqref{eq:conicac}.
For every $r \geq r_{\min}$, let $\y^r$ be an arbitrary optimal solution of problem \eqref{eq:sdpac} and by completing with zeros, consider $\y^r$ as an element of $\R[\x]'$. 
Then the sequence $(\y^r)_{r\geq r_{\min}} \subset \R[\x]'$ converges pointwise to $\y^\star \in \R[\x]'$, {that is, for any fixed $\alpha \in \N^n$}:
\begin{equation}
\label{eq:yalphaaclimit}
\lim\limits_{r\to +\infty} y_\alpha^r = y_\alpha^\star \,.
\end{equation}
Moreover, $\y^\star$ has representing measure $\mu_\ac^\star$. In addition, one has:
\begin{equation} 
\label{eq:rhoaclimit}
\lim\limits_{r\to +\infty} \rho^r_\ac = \rho_\ac^\star = \Vert \mu^\star \Vert_1.
\end{equation}
\begin{proof}
As in the proof of Theorem~3.4 in~\cite{Las16Decomp}, we show that $(\y^r)_{r \geq r_{\min}}$ converges to $(\y^\star)$ since there exists a subsequence of integers $(r_k)$ with $r_k \geq r_{\min}$ such that:
\begin{equation}
\label{proof:cvgac}
\lim_{k \to \infty} y_\alpha^{r_k} = y_\alpha^\star\,.
\end{equation}
For an arbitrary integer $r \geq r_{\min}$, it follows from~\eqref{proof:cvgac} that $0 \preceq \M_r(\y^\star)$. As a consequence of~\cite[Proposition~3.5]{lasserre2009moments}, $\y^\star$ has a representing measure $\mu \in {\mathcal M}_+(\X)$.  Using the fact that $\y^{r_k}$ is feasible for SDP~\eqref{eq:sdpac} together with~\eqref{proof:cvgac}, one has $\linfun_{\y^\star}(\x^\alpha) = 0$, for all $\alpha \in \N_{2 r}^n$,  
$\Cb_r^p(\y) \succeq 0$ and $\M_{r - r_j} (g_j\, \y) \succeq 0$, for all $j = 0,1,\dots, m$. Thus this determinate representing measure satisfies $\Lop (\mu) = 0$ and has a density in $L^p(\X)$ by using Theorem~\ref{th:L2} for the case $p=2$ and Theorem~\ref{th:Linfty} for the case $p=\infty$.
This proves that $\mu$ is feasible for problem~\eqref{eq:conicac} and ensures that $\rho_\ac^\star \geq \int_\X \mu$. 

Since problem~\eqref{eq:sdpac} is a relaxation of problem~\eqref{eq:conicac}, one has $\rho_\ac^\star \leq \rho_{\ac}^{r_k}$ for all $k \in \N$.
Hence, one has $\rho_\ac^\star \leq \lim_{k \to \infty} \rho_{\ac}^{r_k} = \lim_{k \to \infty} y_0^{r_k} = \int_\X \mu$.
This shows that $\mu$ is an optimal solution of problem~\eqref{eq:conicac}, which is unique from Theorem~\ref{th:lpac}. The accumulation point of $\y^r$ is unique as it is the  moment sequence of $\mu_{\ac}^\star$, yielding~\eqref{eq:yalphaaclimit} and~\eqref{eq:rhoaclimit}, the desired results.
\end{proof}
\end{lemma}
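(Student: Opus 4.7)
The plan is to mirror the strategy of Theorem~3.4 in~\cite{Las16Decomp}: extract a pointwise-convergent subsequence from the SDP optima, identify the limit with a feasible point of the infinite-dimensional program~\eqref{eq:conicac}, and then invoke uniqueness of its optimizer from Theorem~\ref{th:lpac}.

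First I would establish uniform boundedness of $(y_\alpha^r)_r$ for each fixed $\alpha \in \N^n$. By the analysis already carried out in Lemma~\ref{th:gapac}, the constraint $\Cb_r^p(\y) \succeq 0$ forces $(y_\alpha^r)^2 \leq z_{2\alpha}$ in the $p=2$ case and $0 \leq y_\alpha^r \leq z_\alpha$ in the $p=\infty$ case, bounds that depend only on the fixed $\alpha$ and not on $r$ (as long as $r \geq |\alpha|/2$, which eventually holds). After extending each $\y^r$ to an element of $\R[\x]'$ by padding with zeros, a diagonal Cantor argument produces a subsequence $(\y^{r_k})_k$ and a functional $\y^\star \in \R[\x]'$ such that $y_\alpha^{r_k} \to y_\alpha^\star$ for every $\alpha \in \N^n$.

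Next I would identify $\y^\star$ with a measure having all the desired properties. For each fixed size $s$, the localizing constraints $\M_{s-r_j}(g_j\,\y^{r_k}) \succeq 0$ pass to the limit in $k$, yielding $\M_{s-r_j}(g_j\,\y^\star) \succeq 0$ for all $s$ and $j = 0,\dots,m$. Under Assumption~\ref{hyp:archimedean}, Putinar's Positivstellensatz, invoked through~\cite[Proposition~3.5]{lasserre2009moments}, then gives the existence of a representing measure $\mu \in \mathcal{M}_+(\X)$. The same passage to the limit on $\Cb_s^p(\y^{r_k}) \succeq 0$, combined with Theorem~\ref{th:L2} (when $p=2$) or Theorem~\ref{th:Linfty} (when $p=\infty$), shows that $\mu$ has a density in $L^p_+(\X)$ with $\Vert \mu \Vert_p \leq 1$. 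The invariance constraints $\linfun_{\y^{r_k}}(\x^\alpha) = 0$ also survive the pointwise limit, and Stone--Weierstrass density of polynomials in $\mathcal{C}(\X)$ (or $\mathcal{C}^1(\X)$ in the continuous-time case) promotes this into $\Lop(\mu) = 0$. Hence $\mu$ is feasible for~\eqref{eq:conicac}.

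To conclude, since~\eqref{eq:sdpac} is a relaxation of~\eqref{eq:conicac}, I have $\rho_\ac^\star \leq \rho_\ac^{r_k} = y_0^{r_k}$ for every $k$, so letting $k \to \infty$ gives $\rho_\ac^\star \leq y_0^\star = \int_\X \mu$; feasibility of $\mu$ yields the reverse inequality, so $\mu$ is optimal for~\eqref{eq:conicac}. By Theorem~\ref{th:lpac} the optimizer is unique, hence $\mu = \mu_\ac^\star$ and $\y^\star$ is its moment sequence, determinate because $\X$ is compact. Since every subsequential limit of $(\y^r)$ must coincide with this determinate sequence, the full sequence converges pointwise to $\y^\star$, giving both~\eqref{eq:yalphaaclimit} and~\eqref{eq:rhoaclimit}. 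The delicate step I expect to be the main obstacle is verifying that the $L^p$ control $\Vert \mu \Vert_p \leq 1$ actually survives the limit rather than degrading to something weaker: a naive weak-star limit would not preserve any $L^p$ bound on the density, and it is precisely the explicit semidefinite characterizations of Theorems~\ref{th:L2} and~\ref{th:Linfty}---matrix inequalities whose truncations remain closed under coordinatewise limits---that make this passage go through.
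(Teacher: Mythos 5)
Your proposal is correct and follows essentially the same route as the paper: extract a pointwise-convergent subsequence using the uniform bounds from Lemma~\ref{th:gapac}, pass the SDP constraints to the limit, invoke Putinar together with Theorems~\ref{th:L2}/\ref{th:Linfty} to obtain a feasible $L^p$ density, and use the relaxation inequality plus the uniqueness in Theorem~\ref{th:lpac} to identify the limit and promote subsequential to full convergence. The only difference is stylistic --- you spell out the boundedness and diagonal extraction that the paper delegates to its citation of Theorem~3.4 in~\cite{Las16Decomp}.
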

\begin{remark}
\label{rk:sdp}
Note that without the uniqueness hypothesis made in Assumption~\ref{hyp:invac}, we are not able to guarantee the pointwise convergence of the sequence of optimal solutions $(\y^r)_{r \geq r_{\min}}$ to $\y^\star$.
\end{remark}
{
\begin{remark}
\label{rk:dual}
One could consider the dual of SDP~\eqref{eq:sdpac}, which is an optimization problem over polynomial sums of squares (SOS). One way to prove the non-existence of invariant densities in $L^p(\X)$ for $p \in \{2,\infty\}$ is to use the output of this dual program, yielding SOS certificates of infeasibility.
\end{remark}
}
\subsection{Approximations of Invariant Densities}
\label{sec:acinvdens}

Recall that $p=2$ or $\infty$. Given a solution $\y^r$ of the SDP~\eqref{eq:sdpac} at finite order $r \geq r_{\min}$, let $h^r \in \R_{2r}[\x]$ be the polynomial with vector of coefficients $\mathbf{h}^r$ given by:
\begin{align}
\label{eq:hrp}
\mathbf{h}^r := \M_r(\z)^{-1} \y^r
\end{align}
where the moment matrix $\M_r(\z)$  is positive definite hence inversible for all $r \in \N$. Note that the degree of the extracted invariant density depends on the SDP relaxation order $r$, and higher relaxation orders lead to higher degree approximations.

\begin{lemma}
\label{th:density}
Let Assumption~\ref{hyp:invac} hold.
For every $r \geq r_{\min}$, let $\y^r$ be an optimal solution of SDP~\eqref{eq:sdpac}, let $h^r$ be the corresponding polynomial obtained as in \eqref{eq:hrp} and let $\mu_\ac^\star$ be the unique optimal solution of problem \eqref{eq:conicac} with density $h^\star_\ac$.
Then, the following convergence holds:
$$
\lim_{r\to +\infty} \int_{\X} g(\x) \, h^r(\x)  d\lambda = \int_{\X} g(\x) \, h^\star_\ac(\x)  d\lambda \,,
$$
for all $g \in \R[\x]$.
\end{lemma}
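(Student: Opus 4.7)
The plan is to reduce the integral $\int_\X g \, h^r \, d\lambda$ to a finite linear combination of the moments $\y^r$, and then apply the pointwise convergence from Lemma~\ref{th:sdpac}. Fix $g \in \R[\x]$ with coefficient vector $\g$, and take $r \geq \deg g$. Expanding $g(\x) = \sum_\beta g_\beta \x^\beta$ and $h^r(\x) = \sum_\gamma h^r_\gamma \x^\gamma$, and integrating each resulting monomial against the Lebesgue measure on $\X$ produces the Lebesgue moments $z^\X_{\beta+\gamma}$, which are precisely the entries of the moment matrix $\M_r(\z)$. This yields the bilinear identity
\[
\int_\X g(\x) \, h^r(\x) \, d\lambda \;=\; \g^T \, \M_r(\z) \, \mathbf{h}^r,
\]
where $\g$ has been zero-padded to an element of the ambient space on which $\M_r(\z)$ acts.

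Next, I would substitute the definition~\eqref{eq:hrp} of $\mathbf{h}^r$ to collapse $\M_r(\z) \, \mathbf{h}^r = \y^r$, giving
\[
\int_\X g(\x) \, h^r(\x) \, d\lambda \;=\; \g^T \y^r \;=\; \sum_{|\alpha| \leq \deg g} g_\alpha \, y^r_\alpha.
\]
The key observation is that this is a \emph{finite} sum whose range of indices does not depend on $r$, since the polynomial $g$ is fixed.

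Finally, I would invoke Lemma~\ref{th:sdpac}: under Assumption~\ref{hyp:invac}, for each fixed $\alpha \in \N^n$ one has $y^r_\alpha \to y^\star_\alpha$ as $r \to \infty$, where $\y^\star$ is the moment sequence of the unique optimizer $\mu_\ac^\star$ of~\eqref{eq:conicac}. Termwise passage to the limit in the finite sum above then delivers
\[
\lim_{r \to \infty} \int_\X g(\x) \, h^r(\x) \, d\lambda \;=\; \sum_{|\alpha| \leq \deg g} g_\alpha \, y^\star_\alpha \;=\; \int_\X g \, d\mu_\ac^\star \;=\; \int_\X g(\x) \, h^\star_\ac(\x) \, d\lambda,
\]
where the last equality uses that $h^\star_\ac$ is the density of $\mu_\ac^\star$ with respect to $\lambda$.

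I do not expect any substantive analytic obstacle: the argument is essentially algebraic bookkeeping combined with the pointwise moment convergence already established, and it applies uniformly to both cases $p = 2$ and $p = \infty$ (neither the identity $\int g h^r\, d\lambda = \g^T \y^r$ nor the invocation of Lemma~\ref{th:sdpac} distinguishes between them). The only mild subtlety is matching the indexing conventions between $\g$ and the vector space on which $\M_r(\z)$ acts; once $r \geq \deg g$, this is handled automatically by the canonical zero-padding of $\g$.
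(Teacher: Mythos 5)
Your proof is correct and follows essentially the same route as the paper: both reduce $\int_\X \x^\alpha h^r\,d\lambda$ to $y^r_\alpha$ via the definition $\M_r(\z)\mathbf{h}^r = \y^r$ and then invoke the pointwise moment convergence of Lemma~\ref{th:sdpac}. The paper phrases it monomial-by-monomial and concludes by linearity, whereas you package the same computation as the bilinear identity $\g^T\M_r(\z)\mathbf{h}^r = \g^T\y^r$; the content is identical.
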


\begin{proof}
By definition~\eqref{eq:hrp}, one has:
$$
\int_{\X} \x^\alpha h^r(\x)d\lambda = \left[\M_r(\z)\h^r \right]_\alpha = y^r_\alpha \,,
$$
for all $\alpha \in \N^n_{2r}$. As a consequence of~\eqref{eq:yalphaaclimit} from Lemma~\ref{th:sdpac}, this yields:
$$
 \int_\X \x^\alpha h^r(\x)d\lambda = y_\alpha^r \to y_\alpha^\star = \int_\X \x^\alpha h^\star_\ac(\x)d\lambda,
$$
as $r$ tends to infinity. Thus, for all $g \in \R[\x]$, $\int_\X g(\x) \, h^r(\x) d\lambda \to \int_\X g(\x) \, h^\star_\ac(\x) d\lambda$, yielding the desired result.
\end{proof}

\subsection{Extension to Piecewise Polynomial Systems}
\label{sec:piecewise}
Now we explain how to extend the current methodology to piecewise polynomial systems. The idea, inspired from~\cite{Switch13}, consists in using the piecewise structure of the dynamics and the state-space partition to decompose the invariant measure into a sum of local invariant measures supported on each partition cell while being invariant w.r.t.~the local dynamics. 

Let us consider a set of cell indices $I$ and a union of semialgebraic cells $\X = \bigcup_{i \in I} \X_i \subset \R^n $ partitioning the state-space. For each $i \in I$, the state-space cell $\X_i$ is assumed to be a compact basic semialgebraic set:
\begin{equation} \label{eq:defXi}
 \X_i :=  \{\x \in \R^n : g_{i,1}(\x)  \geq 0, \dots, g_{i,m_i} (\x) \geq 0 \} \,,
\end{equation} 
defined by given polynomials $g_{i,1},\dots,g_{i,m_i} \in \R[\x]$, $m_i \in \N$ and fulfilling Assumption~\ref{hyp:archimedean} as well as Assumption~\ref{hyp:momb}. We set $r_{i,j} := \lceil \deg g_{i,j} / 2 \rceil$, for all $j=0,1,\dots,m_i$ and $i \in I$.
Then, one considers either the following discrete-time piecewise polynomial system: 
\begin{align}
\label{eq:pwdisc}
\x_{t+1} = f_i(\x_t) \quad \text{for } \x \in \X_i \,, \quad i \in I \,, \quad t \in \N \,,
\end{align}
or the following continuous-time piecewise polynomial system:
\begin{align}
\label{eq:pwcont}
\dot{\x} = f_i(\x) \quad \text{for } \x \in \X_i \,, \quad i \in I \,, \quad t \in [0, \infty) \,.
\end{align}
For $p \geq 1$, this leads to the following infinite-dimensional conic program:
\begin{equation}
\label{eq:pwconic}
\begin{aligned}
\sup\limits_{\mu_i} \quad  & \sum_{i \in I} \int_{\X_i}  \mu_i  \\		
\text{s.t.} 
\quad & \sum_{i \in I} \mathcal{L}_{f_i} (\mu_i) = 0 \,, \\
\quad & \sum_{i \in I} \Vert  \mu_i \Vert_p \leq 1   \,,\\
\quad & \mu_i \in L^p_+(\X_i) \,, \quad i \in I \,.
\end{aligned}
\end{equation}
Given $\mu_i \in  \mathcal{M}_+(\X_i)$ let us define $\mu := \sum_{i \in I} \mu_i \in \mathcal{M}_+(\X)$ as well as the linear mapping $\Lop : \mathcal{M}_+(\X) \to \mathcal{M}_+(\X)$ by $\Lop (\mu) := \sum_{i \in I} \mathcal{L}_{f_i} (\mu_i)$. Since $\Vert \mu \Vert_p^p = \sum_{i \in I} \Vert \mu_i \Vert_p^p$, one can rewrite 
problem~\eqref{eq:pwconic} as problem~\eqref{eq:conicac}.

Next, we associate to problem~\eqref{eq:pwconic} the hierarchy of SDP relaxations indexed by $r\geq \max_{i\in I} \{ \max_{0 \leq j \leq m_i}\{r_{i,j}\} \}$:
\begin{equation}
\label{eq:pwsdp}
\begin{aligned}
\max\limits_{\y_i} \quad & y_{i,0} \\			
\text{s.t.} 
\quad & \sum_{i \in I} \linfun_{\y_i}(\x^\alpha) = 0 \,,
\quad \forall \alpha \in \N_{2 r}^n \,,  \\
\quad & \sum_{i \in I} \Cb^p_r(\y_i) \succeq 0  \,,  \\
\quad &  \M_{r} (g_{i,j}\, \y_i) \succeq 0 \,, \quad j = 0,1,\dots, m_i \,, \quad i \in I \,.
\end{aligned}
\end{equation}
As for Lemma~\ref{th:sdpac} in Section~\ref{sec:acsdp}, one proves that the sequence of optimal values of SDP~\eqref{eq:pwsdp} converges to the optimal value of problem~\eqref{eq:pwconic}. The extraction of  approximate invariant densities can be performed in a way similar to the procedure described in Section~\ref{sec:acinvdens}.
\section{Singular Invariant Measures} 
\label{sec:sing}
In the sequel, we focus on computing the support of singular measures for either discrete-time or continuous-time polynomial systems. Our approach is inspired from the framework presented in~\cite{Las16Decomp}, yielding a numerical scheme to obtain the Lebesgue decomposition of a measure $\mu$ w.r.t.~$\lambda$, for instance when $\lambda$ is the Lebesgue measure. 
By contrast with~\cite{Las16Decomp} where all moments of $\mu$ and $\lambda$ are {\em a priori} known, we only know the moments of the Lebesgue measure $\lambda$ in our case but we impose an additional constraint on $\mu$ to be an invariant probability measure.

\subsection{Infinite-Dimensional LP Formulation}
\label{sec:singlp}
We start by considering the infinite-dimensional linear optimization problem:
\if{
\begin{equation}
\label{eq:lpsingsimple}
\begin{aligned}
\rho_\sing^\star := \sup\limits_{\mu, \nu} \quad  & \int_\X  \nu  \\		
\text{s.t.} 
\quad & \Lop (\mu) =  0 \,, \quad \int_\X \mu = 1 \,,\\
\quad & \nu \leq \mu  \,, \quad \nu \leq \lambda_\X \,,\\
\quad & \mu, \nu \in  \mathcal{M}_+(\X) \,,\\
\end{aligned}
\end{equation}
which can be equivalently written as follows:
}\fi
\begin{equation}
\label{eq:lpsing}
\begin{aligned}
\rho_\sing^\star = \sup\limits_{\mu, \nu, \hat{\nu}, \psi} \quad  & \int_\X  \nu  \\		
\text{s.t.} 
\quad & \int_\X \mu = 1  \,, \quad \Lop (\mu) =  0  \,,\\
\quad & \nu + \psi = \mu  \,, \quad \nu + \hat{\nu} = \lambda_\X \,,\\
\quad & \mu, \nu, \hat{\nu}, \psi \in  \mathcal{M}_+(\X) \,.\\
\end{aligned}
\end{equation}
%

\begin{assumption}
\label{hyp:inv}
There exists a unique invariant probability measure $\mu^\star \in \mathcal{M}_+(\X)$.
\end{assumption}
For a measure $\nu$ with density $h \in L_+^\infty(\X)$, let us denote by $\max\{1,\nu\}$ the measure with density  $x \mapsto \max\{1,h(x)\} \in L_+^\infty(\X)$.
\begin{theorem}
\label{th:lpsing}
{
Let Assumption~\ref{hyp:inv} hold. Then LP~\eqref{eq:lpsing} has a unique optimal solution $(\mu^\star, \nu_1^\star, \lambda_\X - \nu_1^\star, \mu^\star - \nu_1^\star)$, where $(\nu^\star, \mu^\star - \nu^\star)$ is the Lebesgue decomposition of $\mu^\star$ w.r.t.~$\lambda_\X$ and $\nu_1^\star:=\max\{1,\nu^\star\} \in L^\infty_+(\X)$.}
\end{theorem}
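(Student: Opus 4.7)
The plan is to reduce LP~\eqref{eq:lpsing} to a pointwise maximization on densities, in two stages: fix $\mu$ using Assumption~\ref{hyp:inv}, then optimize the remaining measures via the Lebesgue decomposition of $\mu^\star$.

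First, under Assumption~\ref{hyp:inv}, the two constraints $\int_\X\mu=1$ and $\Lop(\mu)=0$ admit the unique feasible choice $\mu=\mu^\star$. Substituting this, the auxiliary variables $\psi=\mu^\star-\nu$ and $\hat\nu=\lambda_\X-\nu$ become determined by $\nu$, and the nonnegativity requirements $\psi,\hat\nu\in\mathcal{M}_+(\X)$ amount to the pair of domination constraints $\nu\leq\mu^\star$ and $\nu\leq\lambda_\X$. Hence LP~\eqref{eq:lpsing} collapses to
\[
\sup\bigl\{\,\nu(\X)\;:\;\nu\in\mathcal{M}_+(\X),\;\nu\leq\mu^\star,\;\nu\leq\lambda_\X\,\bigr\}.
\]

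Next I would write the Lebesgue decomposition $\mu^\star=\nu^\star+\psi^\star$, with $\nu^\star\ll\lambda_\X$ of density $h^\star\in L^1_+(\X)$ and $\psi^\star=\mu^\star-\nu^\star\perp\lambda_\X$. The constraint $\nu\leq\lambda_\X$ forces $\nu$ to be absolutely continuous with density $h_\nu\in L^\infty_+(\X)$ bounded above by $1$ almost everywhere. The uniqueness of the Lebesgue decomposition, combined with $\psi^\star\perp\lambda_\X$, turns the constraint $\nu\leq\mu^\star$ into the pointwise bound $h_\nu\leq h^\star$ a.e.~on $\X$. The feasible set for $\nu$ is therefore parameterized by densities $h_\nu$ satisfying $0\leq h_\nu\leq\min\{1,h^\star\}$ a.e., and the unique maximizer of the linear functional $h_\nu\mapsto\int_\X h_\nu\,d\lambda_\X$ on this set is $h_\nu=\min\{1,h^\star\}$ a.e., i.e.~the measure $\nu_1^\star$ of the statement, which belongs to $L^\infty_+(\X)$ since its density is bounded by $1$. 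Uniqueness of the remaining components of the optimal tuple follows mechanically: $\psi=\mu^\star-\nu_1^\star$ and $\hat\nu=\lambda_\X-\nu_1^\star$ are fixed by the equality constraints, and both are nonnegative measures by construction.

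The main obstacle I anticipate is the rigorous passage from the measure-theoretic inequality $\nu\leq\mu^\star$ to the pointwise inequality $h_\nu\leq h^\star$ between Lebesgue densities. This step requires using that the singular part $\psi^\star$ is concentrated on a $\lambda_\X$-null set, so that any $\lambda_\X$-absolutely continuous measure dominated by $\mu^\star$ must in fact be dominated by its absolutely continuous part $\nu^\star$. Once this reduction is in place, the proof amounts to a short optimization argument over nonnegative densities, and the uniqueness part of the conclusion follows from the strict monotonicity of $h_\nu\mapsto\int_\X h_\nu\,d\lambda_\X$ on the $L^1$-equivalence classes of feasible densities.
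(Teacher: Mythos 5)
Your proof is correct and takes a different, more self-contained route than the paper's. The paper first establishes existence abstractly: it shows the feasible set of LP~\eqref{eq:lpsing} is nonempty (via the trivial point $(\mu^\star,0,\lambda_\X,\mu^\star)$), bounded in total variation, and weak-star closed, hence weak-star compact, so the linear objective attains its supremum; it then defers the identification of the optimizer and its uniqueness to Theorem~3.1 of~\cite{Las16Decomp} with $\gamma \leftarrow 1$. You instead reduce the LP directly: Assumption~\ref{hyp:inv} pins $\mu = \mu^\star$, the equality constraints eliminate $\psi$ and $\hat\nu$, and what remains is $\sup\{\nu(\X) : \nu \in \mathcal{M}_+(\X),\ \nu \leq \mu^\star,\ \nu \leq \lambda_\X\}$, which you solve exactly by passing to densities. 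The step you rightly flag as the crux --- that $\nu \leq \lambda_\X$ forces $\nu \ll \lambda_\X$, and then $\nu \leq \mu^\star$ together with $\psi^\star$ concentrated on a $\lambda_\X$-null set yields $\nu \leq \nu^\star$, hence $h_\nu \leq h^\star$ a.e. --- is precisely the content of the cited Lasserre argument, so your write-up makes explicit what the paper invokes by reference, and it delivers existence for free since you exhibit the maximizer rather than appealing to compactness. One remark worth recording: the theorem statement and the auxiliary definition in the paper write $\max\{1,\nu^\star\}$, but (as your density argument with $\min\{1,h^\star\}$ implicitly recognizes) this must be $\min\{1,\nu^\star\}$ --- otherwise $\lambda_\X - \nu_1^\star$ would fail to be a nonnegative measure on any set where $h^\star>1$, and the claimed tuple would not even be feasible. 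Your version is the correct one.
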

\begin{proof}
We first prove that the feasible set of LP~\eqref{eq:lpsing} is nonempty and weak-star compact. Let us denote by $\mu^\star$ the unique invariant probability  measure for $f$.  Then, nonemptiness follows from the fact that $(\mu^\star, 0, \lambda_\X, \mu^\star)$ is feasible for LP~\eqref{eq:lpsing}. Now, let us consider the sequences of measures $((\mu_n)_n, (\nu_n)_n, (\hat{\nu}_n)_n, (\psi_n)_n)$ such that for all $n \in \N$, $(\mu_n, \nu_n, \hat{\nu}_n, \psi_n)$ is feasible for LP~\eqref{eq:lpsing}. One has  $\| \mu_n \|_\tv = 1 = \| \nu_n \|_\tv + \| \psi_n \|_\tv < \infty$ and $(\hat{\nu})_n \leq \vol \X < \infty$ (as $\X$ is bounded). This shows that the feasible set of LP~\eqref{eq:lpsing} is bounded for the weak-star topology. Now, let us assume that the sequences of measures respectively converge weakly-star to $\mu$, $\nu$, $\hat{\nu}$ and $\psi$. For all $A \in \mathcal{B}(\X)$, one has $0 = \Lop(\mu_n)(A) \to \Lop(\mu) (A)$ and we easily see that $(\mu, \nu, \hat{\nu}, \psi)$ is feasible for LP~\eqref{eq:lpsing}, which proves that the feasible set is closed in the metric including the weak-star topology. LP~\eqref{eq:lpsing} has a linear cost function and a weak-star compact feasible set, thus admits an optimal solution.

The proof that $(\mu^\star, \nu_1^\star, \lambda_\X - \nu_1^\star, \mu^\star - \nu_1^\star)$ is the unique optimal solution of LP~\eqref{eq:lpsing} is similar to the one of Theorem~3.1 in~\cite{Las16Decomp} with the notations $f^\star \leftarrow h^\star$, $\gamma \leftarrow 1$.
\end{proof}
Now we explain the rationale behind LP~\eqref{eq:lpsing}. When there is no absolutely continuous invariant probability measure supported on $\X$, then LP~\eqref{eq:lpsing} has an optimal solution $(\mu^\star, 0, \lambda_\X, \mu^\star)$ with $\mu^\star$ being the unique  singular invariant probability measure. In this case, the value of LP~\eqref{eq:lpsing} is $\rho_\sing^\star = 0$. Note that in the general case where Assumption~\ref{hyp:inv} does not hold, there may be several invariant probability measures. In this case, LP~\eqref{eq:lpsing} still admits an optimal solution and the optimal value is the maximal mass of the $\nu$-component among all invariant probability measures. 

{By contrast with problem~\eqref{eq:conicac} from Section~\ref{sec:ac}, we enforce the feasibility constraints by adding the condition for $\mu$ to be a probability measure. The reason is that if we remove this condition, the value $\rho_\sing^\star = 0$ could still be obtained with another optimal solution $(0, 0, \lambda_\X, 0)$, and we could not retrieve the unique invariant probability measure $\mu^\star$.
}
\subsection{A Hierarchy of Semidefinite Programs}
\label{sec:singsdp}
For every $r \geq r_{\min}$, we consider the following optimization problem:
\begin{equation}
\label{eq:sdpsing}
\begin{aligned}
\rho_{\sing}^r := \sup\limits_{\u, \v, \hat{\v}, \y} \quad & v_0 \\
\text{s.t.} 
\quad & u_0 = 1 \,, \quad \linfun_{\u}(\x^\alpha) =  0 \,,
\quad \forall \alpha \in \N_{2 r}^n \,, \\
\quad & v_{\alpha} + y_{\alpha} = u_\alpha \,, \quad  v_{\alpha} + \hat{v}_{\alpha} = z_\alpha, \quad \forall \alpha \in \N_{2 r}^n \,, \\
\quad & 
\M_{r - r_j}(g_j \, \u) \,,  
\M_{r - r_j}(g_j \, \v) \succeq 0 \,,
\quad j = 0,\dots, m \,,\\ 
\quad &
\M_{r - r_j}(g_j \, \hat{\v}) \,,
\M_{r - r_j}(g_j \, \y) \succeq 0 \,, 
\quad j = 0,\dots, m \,.\\
\end{aligned}
\end{equation}
Problem~\eqref{eq:sdpsing} is a finite-dimensional SDP relaxation of LP~\eqref{eq:lpsing}, implying that $\rho_{\sing}^r \geq \rho_\sing^\star$ for every $r \geq r_{\min}$. 

\begin{theorem}
\label{th:gapsing}
Problem~\eqref{eq:sdpsing} has a compact feasible set and an optimal solution $(\u^\star, \v^\star, \hat{\v}^\star, \y^\star)$.
\end{theorem}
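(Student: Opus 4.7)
My plan is to establish the result in the standard three steps: show the feasible set is nonempty, show it is bounded, and observe that it is closed. Together these give compactness, and continuity of the linear objective $(\u,\v,\hat{\v},\y)\mapsto v_0$ on a compact set then yields existence of an optimal solution.

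\textbf{Nonemptiness.} By the Krylov--Bogolyubov theorem \cite{Krylov37} (and its continuous-time analogue), the system admits at least one invariant probability measure $\mu$ on the compact set $\X$. I take $\u$ to be its truncated moment sequence of order $2r$, and set $\v := 0$, $\y := \u$, $\hat{\v} := \z$. Then $u_0=1$ holds because $\mu$ is a probability measure; $\linfun_{\u}(\x^\alpha)=0$ holds because $\mu$ is invariant; the identities $v_\alpha+y_\alpha=u_\alpha$ and $v_\alpha+\hat{v}_\alpha=z_\alpha$ hold by construction; and all localizing matrices in the constraints are positive semidefinite because they arise as moment/localizing matrices of nonnegative measures on $\X$ (namely $\mu$, the zero measure, $\lambda_\X$, and $\mu$ again).

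\textbf{Boundedness.} Here I use the archimedean condition (Assumption~\ref{hyp:archimedean}) applied to each of the four coupled sequences. For any sequence $\w \in \{\u,\v,\hat{\v},\y\}$ satisfying $\M_r(\w)\succeq 0$ and $\M_{r-r_j}(g_j\,\w)\succeq 0$ for all $j$, the localizing matrix associated to the ball polynomial $N-\|\x\|_2^2$ gives the recursive inequality $\sum_{i=1}^n w_{2(\alpha+e_i)} \leq N\, w_{2\alpha}$, so the diagonal entries are bounded by $N^{|\alpha|}w_0$; a Cauchy--Schwarz inequality applied to $\M_r(\w)\succeq 0$ then bounds every off-diagonal entry by the diagonal entries. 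It remains to bound the zero-order moments: $u_0=1$ is fixed; the identity $v_0+\hat{v}_0=z_0=\vol \X$ together with $v_0,\hat{v}_0\geq 0$ (diagonal entries of PSD moment matrices) forces $v_0,\hat{v}_0 \in [0,\vol \X]$; and $y_0 = u_0-v_0 \in [0,1]$. Thus the four sequences are uniformly bounded in the feasible set.

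\textbf{Closedness and conclusion.} The feasible set is the intersection of finitely many affine hyperplanes (the linear equality constraints) with preimages of the closed positive semidefinite cone under the continuous linear maps $\w\mapsto \M_{r-r_j}(g_j\,\w)$. It is therefore closed, hence compact. Since the objective $v_0$ is linear (continuous), it attains its maximum on this compact set, producing the desired optimal solution $(\u^\star,\v^\star,\hat{\v}^\star,\y^\star)$.

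I expect the main obstacle to be the careful propagation of the archimedean bound through the four coupled sequences: while the argument is standard and is essentially reused from the proof of Lemma~\ref{th:ACInvariantMeasure}, one must keep track of the cross-constraints $\v+\y=\u$ and $\v+\hat{\v}=\z$ to show that the zero-order terms $v_0,\hat{v}_0,y_0$ are bounded before invoking the recursive moment bound. All the remaining ingredients are routine consequences of the closedness of the PSD cone and the continuity of the finite-dimensional constraint maps.
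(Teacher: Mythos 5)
Your proposal is correct and follows essentially the same approach as the paper: exhibit the trivial feasible point $(\u,0,\z,\u)$ via Krylov--Bogolyubov, bound the moments via the archimedean constraint and the linear coupling equations, and conclude by compactness. The only cosmetic difference is that the paper bounds the diagonals of $\v,\hat\v,\y$ directly from those of $\u$ and $\z$ through the constraints (citing~\cite[Lemma 4.3]{LasserreNetzer07SOS}), whereas you first pin down the zero-order moments and then propagate the archimedean recursion separately to each of the four sequences; both are valid.
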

\begin{proof}
%
First, let us denote by $\mu^\star$ the invariant probability measure with associated moment sequence $\u$. Since $\z$ is the moment sequence associated with $\lambda_\X$, it follows that SDP~\eqref{eq:sdpsing} has the trivial feasible solution $(\u, 0, \z, \u)$. 

Then, Assumption~\ref{hyp:archimedean} implies that the semidefinite constraint $\M_{r - 1}(g^\X \u) \succeq 0$ holds. Thus, the first diagonal element of  $\M_{r - 1}(g^\X \u)$ is nonnegative,
and since  $\ell_{\u}(1)  = u_0 = 1$, it follows that $\ell_{\u}(x_i^{2 r}) \leq N^r$, $i=1,\ldots,n$. We deduce from~\cite[Lemma 4.3, p. 111]{LasserreNetzer07SOS} that $|u_\alpha|$ is
bounded for all $\alpha \in \N_{2 r}^n$.
In addition, from the moment constraints of SDP~\eqref{eq:sdpsing}, one has for every $i=1,\ldots,n$:
\[
\ell_{\v}(x_i^{2 r}) \,, \ell_{\hat{\v}}(x_i^{2 r})  \leq \int_\X x_i^{2 r} d \lambda_\X \,, 
\quad \ell_{\y}(x_i^{2 r}) \leq \ell_{\u} (x_i^{2 r}) \leq N^r \,.
\]
which similarly yields that $|v_\alpha|$, $|\hat{v}_\alpha|$ and $|y_\alpha|$ are all 
bounded for all $\alpha \in \N_{2 r}^n$.
Therefore, we conclude that the feasible set of SDP~\eqref{eq:sdpsing} is compact and that there exists an optimal solution $(\u^\star, \v^\star, \hat{\v}^\star, \y^\star)$.
\end{proof}
\begin{theorem}
\label{th:cvgsing}
Let Assumption~\ref{hyp:inv} hold.
For every $r \geq r_{\min}$, let $(\u^r, \v^r, \hat{\v}^r, \y^r)$ be an arbitrary optimal solution of SDP~\eqref{eq:sdpsing} and by completing with zeros, consider $\u^r$, $\v^r$, $\hat{\v}^r$, $\y^r$ as elements of $\R[\x]'$. 
The sequence $(\u^r, \v^r, \hat{\v}^r, \y^r)_{r \geq r_{\min}}  \subset (\R[\x]')^4$ converges pointwise to $(\u^\star, \v^\star, \hat{\v}^\star, \y^\star) \subset (\R[\x]')^4$,  {that is, for any fixed $\alpha \in \N^n$}:
\begin{equation}
\label{eq:cvg}
\lim_{r \to \infty} u_\alpha^r = u_\alpha^\star \,, \quad
\lim_{r \to \infty} v_\alpha^r = v_\alpha^\star \,, \quad
\lim_{r \to \infty} \hat{v}_\alpha^r = z_\alpha^\X -  v_\alpha^\star \,, \quad
\lim_{r \to \infty} y_\alpha^r = u_\alpha^\star - v_\alpha^\star \,.
\end{equation}
Moreover, with $(\mu^\star, \nu_1^\star, \lambda_\X - \nu_1^\star, \mu^\star - \nu_1^\star)$ being the unique optimal solution of LP~\eqref{eq:lpsing}, $\u^\star$ is the moment sequence of the unique invariant probability measure $\mu^\star$, $\v^\star$ and $\y^\star$ are the  respective moment sequences of $\nu_1^\star$, $\hat{\nu}^\star = \lambda_\X - \nu_1^\star$, $\mu^\star - \nu_1^\star$.

In addition, one has:
\[
\lim_{r \to \infty} \rho_\sing^r = \rho_\sing^\star\,. 
\]
\end{theorem}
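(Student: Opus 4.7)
The plan is to mirror the strategy used for Lemma~\ref{th:sdpac}, adapted to the four moment sequences appearing in SDP~\eqref{eq:sdpsing}. First, by Theorem~\ref{th:gapsing} the feasible set of SDP~\eqref{eq:sdpsing} is compact for every $r \geq r_{\min}$, and in particular all coordinates $u_\alpha^r, v_\alpha^r, \hat v_\alpha^r, y_\alpha^r$ are uniformly bounded in $r$ for each fixed $\alpha \in \N^n$ (the bound from Theorem~\ref{th:gapsing} is uniform since the constants $N^r$ involve only $|\alpha|$). By a standard diagonal extraction argument, as in the proof of Theorem~3.4 in~\cite{Las16Decomp}, one can extract a subsequence $(r_k)_{k \in \N}$ along which all four sequences converge pointwise to some limits $(\u^\star, \v^\star, \hat{\v}^\star, \y^\star) \in (\R[\x]')^4$.

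Next, I would show that these limits are the moment sequences of measures in $\mathcal{M}_+(\X)$ that form a feasible quadruple for LP~\eqref{eq:lpsing}. The key observation is that for every fixed $r$, the limiting sequences satisfy the localizing matrix conditions $\M_{r - r_j}(g_j\,\u^\star) \succeq 0$ and similarly for $\v^\star, \hat{\v}^\star, \y^\star$, because for $k$ large enough $r_k \geq r$ and semidefiniteness is preserved under pointwise limits. Combined with Assumption~\ref{hyp:archimedean}, Putinar's Positivstellensatz (as invoked via~\cite[Proposition~3.5]{lasserre2009moments}) then yields representing measures $\mu, \nu, \hat\nu, \psi \in \mathcal{M}_+(\X)$ with moment sequences $\u^\star, \v^\star, \hat{\v}^\star, \y^\star$, respectively. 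Passing to the limit in the affine constraints of SDP~\eqref{eq:sdpsing} gives $\int_\X \mu = u_0^\star = 1$, $\Lop(\mu) = 0$ (via density of polynomials in $\mathcal{C}(\X)$ as used in Section~\ref{sec:pb}), $\nu + \psi = \mu$ and $\nu + \hat\nu = \lambda_\X$. Hence $(\mu, \nu, \hat\nu, \psi)$ is feasible for LP~\eqref{eq:lpsing}.

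For optimality, observe that SDP~\eqref{eq:sdpsing} is a relaxation of LP~\eqref{eq:lpsing}, so $\rho_\sing^{r_k} \geq \rho_\sing^\star$ for every $k$. Passing to the limit along the subsequence yields
\begin{equation*}
\lim_{k \to \infty} \rho_\sing^{r_k} \;=\; \lim_{k \to \infty} v_0^{r_k} \;=\; v_0^\star \;=\; \int_\X \nu \;\geq\; \rho_\sing^\star,
\end{equation*}
while simultaneously $\int_\X \nu \leq \rho_\sing^\star$ by feasibility of $(\mu,\nu,\hat\nu,\psi)$. Thus the quadruple $(\mu,\nu,\hat\nu,\psi)$ is optimal for LP~\eqref{eq:lpsing}, and by the uniqueness established in Theorem~\ref{th:lpsing} it must coincide with $(\mu^\star, \nu_1^\star, \lambda_\X - \nu_1^\star, \mu^\star - \nu_1^\star)$. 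This identifies $\u^\star, \v^\star, \hat{\v}^\star, \y^\star$ as the announced moment sequences.

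Finally, to upgrade subsequential convergence to full pointwise convergence of the original sequence $(\u^r, \v^r, \hat{\v}^r, \y^r)_{r \geq r_{\min}}$, I would use the usual uniqueness-of-accumulation-point argument: every subsequence of $(\u^r, \v^r, \hat{\v}^r, \y^r)$ has, by the compactness argument above, a further subsequence converging to a feasible optimal quadruple of LP~\eqref{eq:lpsing}, which by Theorem~\ref{th:lpsing} is unique and equals $(\u^\star, \v^\star, \hat{\v}^\star, \y^\star)$. Hence the entire sequence converges pointwise to this unique limit, establishing~\eqref{eq:cvg} together with $\lim_{r \to \infty} \rho_\sing^r = v_0^\star = \rho_\sing^\star$. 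The main technical obstacle I foresee is the careful justification of the diagonal extraction and the passage to the limit in the semidefiniteness constraints simultaneously for all four sequences at all fixed truncation levels; however, this is essentially the same argument as in the cited~\cite{Las16Decomp} and in the proof of Lemma~\ref{th:sdpac}, so no new difficulty arises beyond bookkeeping.
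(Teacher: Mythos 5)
Your proposal is correct and follows essentially the same route as the paper: extract a pointwise-convergent subsequence (as in Theorem~3.4 of~\cite{Las16Decomp}), show via preserved positive-semidefiniteness and~\cite[Proposition~3.5]{lasserre2009moments} that the limit quadruple is a feasible point of LP~\eqref{eq:lpsing}, deduce optimality from the relaxation inequality passed to the limit, invoke the uniqueness from Theorem~\ref{th:lpsing}, and conclude full convergence by uniqueness of accumulation points. The only cosmetic difference is that you state the uniform-bound and diagonal-extraction step slightly more explicitly, whereas the paper delegates it to the cited reference.
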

\begin{proof}
As in the proof of Theorem~3.4 in~\cite{Las16Decomp}, we show that $(\u^r, \v^r, \hat{\v}^r, \y^r)_{r \geq r_{\min}}$ converges pointwise to $(\u^\star, \v^\star, \hat{\v}^\star, \y^\star)$ since there exists a subsequence of integers $(r_k)$ with $r_k \geq r_{\min}$ such that:
\begin{equation}
\label{proof:cvg}
\lim_{k \to \infty} u_\alpha^{r_k} = u_\alpha^\star \,, \quad
\lim_{k \to \infty} v_\alpha^{r_k} = v_\alpha^\star \,, \quad
\lim_{k \to \infty} \hat{v}_\alpha^{r_k} = \hat{v}_\alpha^\star \,, \quad
\lim_{k \to \infty} y_\alpha^{r_k} = y_\alpha^\star\,.
\end{equation}
By fixing an arbitrary integer $r \geq r_{\min}$, it follows from~\eqref{proof:cvg} that $0 \preceq \M_r(\u^\star)$, $0 \preceq \M_r(\v^\star)$, $0 \preceq \M_r(\v^\star)$ and $0 \preceq \M_r(\y^\star)$. Therefore, by using~\cite[Proposition~3.5]{lasserre2009moments}, $\u^\star, \v^\star, \hat{\v}^\star$ and $\y^\star$ are the respective moment sequences of determinate representing measures $\mu$, $\nu$, $\hat{\nu}$ and $\psi$, supported on $\X$.  Using the fact that $(\u^{r_k}, \v^{r_k}, \hat{\v}^{r_k}, \y^{r_k})$ is feasible for SDP~\eqref{eq:sdpsing} together with~\eqref{proof:cvg}, these determinate representing measures must satisfy $\Lop (\mu) = 0$, $\int_\X \mu = 1$, $\nu + \psi = \mu$ and $\nu + \hat{\nu} = \lambda_\X$. This proves that $(\mu, \nu, \hat{\nu}, \psi)$ is feasible for LP~\eqref{eq:lpsing}, thus $\rho_\sing^\star \geq \int_\X \nu$. In addition, one has $\rho_\sing^\star \leq \lim_{k \to \infty} \rho_\sing^{r_k} = \lim_{k \to \infty} v_0^{r_k} = \int_\X \nu$.
This shows that $(\mu, \nu, \hat{\nu}, \psi)$ is an optimal solution of LP~\eqref{eq:lpsing}, which is unique from Theorem~\ref{th:lpsing}.  All accumulation points of $(\u^r$, $\v^r$, $\hat{\v}^r$, $\y^r)$ are unique as they are the  respective moment sequences of $\mu^\star$, $\nu_1^\star$, $\hat{\nu}^\star = \lambda_\X - \nu_1^\star$, $\mu^\star - \nu_1^\star$ and yields~\eqref{eq:cvg}, the desired result.
\end{proof}
The meaning of Theorem~\ref{th:cvgsing} is similar to the one of Theorem~3.4 in~\cite{Las16Decomp}. By noting $(\nu^\star, \psi^\star)$ the Lebesgue decomposition of the unique invariant probability measure $\mu^\star$, we have the two following cases:
\begin{enumerate}
\item If $\nu^\star \in L^\infty_+(\X)$  with $\|\nu^\star\|_\infty \leq 1$ then we can obtain all the moment sequences associated to $\nu^\star$ and $\psi^\star$ by computing $\v^r$ and $\y^r$ through solving SDP~\eqref{eq:sdpsing} as $r \to \infty$. In~\cite{Las16Decomp}, the sup-norm must be less than an arbitrary fixed $\gamma > 0$ while in the present study we  select $\gamma = 1$ as we consider an invariant probability measure $\mu^\star$. In particular, when there is no invariant measure which is absolutely continuous w.r.t.~$\lambda$, one has $\nu^\star = \nu_1^\star = 0$, $\psi^\star = \mu^\star$ and we obtain in the limit the moment sequence $\y^\star$ of the singular measure $\mu^\star$.

\item 
{If $\nu^\star \notin L^\infty_+(\X)$ or $\nu^\star \in L^\infty_+(\X)$  with $\|\nu^\star\|_\infty > 1$, then the invariant probability measure $\mu^\star$ is equal to $\nu' + \psi'$, with $\nu' = \max\{1,\nu^\star\} \in L^\infty_+(\X)$ and $\psi' = \mu^\star - \nu'$ is not singular w.r.t.~$\lambda$.}
\end{enumerate}

\subsection{Support Approximations for Singular Invariant Measures}
%

\begin{definition}{\textbf{(Christoffel polynomial)}}
Let $\mu \in {\mathcal M}_+(\X)$ be such that its moments are all finite and that for all $r \in \N$, the moment matrix $\M_r(\u)$ is positive definite. With $\v_r(\x)$ denoting the vector of monomials of degree less or equal than $r$, sorted by graded lexicographic order, the Christoffel polynomial is the function $p_{\mu, r} : \X  \to \R$ such that
\[
\x \mapsto p_{\mu, r} (\x) := \v_r(\x)^T \M_r(\u)^{-1} \v_r(\x).
\]
\end{definition}

The following assumption is similar to~\cite[Assumption~3.6 (b)]{Christoffel17}. It provides the existence of a sequence of thresholds $(\alpha_r)_{r \in \N}$ for the Christoffel function associated to a given measure {$\mu$ in order to approximate the support $\S$ of this measure}.
Here, we do not assume as in~\cite[Assumption~3.6 (a)]{Christoffel17} that the closure of the interior of $\S$ is equal to $\S$.
%


\begin{assumption}
\label{hyp:christoffel}
{Given a measure $\mu \in {\mathcal M}_+(\X)$ with support $\S \subseteq \X$}, $\S$ has nonempty interior and there exist three sequences $(\delta_r)_{r \in \N}$, $(\alpha_r)_{r \in \N}$, $(d_r)_{r \in \N}$ such that: 
{
\begin{itemize}
\item $(\delta_r)_{r \in \N}$ is a decreasing sequence of positive numbers converging to 0. 
\item For every $r \in \N$, $d_r$ is the smallest integer such that:
\begin{equation}
\label{eq:christoffel}
2^{3 - \frac{\delta_r d_r}{\delta_r + \diam \S}} 
d_r^n \biggl( \frac{e}{n} \biggr)^n \exp \biggl( \frac{n^2}{d_r} \biggr)
 \leq \alpha_r \,.
\end{equation}
\item For every $r \in \N$, $\alpha_r$ is defined as follows:
\[
\alpha_r := \frac{\delta_r^n \omega_n}{\vol \S}  \frac{(d_r + 1)(d_r+2)(d_r+3)}{(d_r+n+1)(d_r+n+2)(2 d_r+n+6)}
\,,
\]
where $\diam \S$ denotes the diameter of the set $\S$ and $\omega_n := \frac{2 \pi^{\frac{n+1}{2}}}{\Gamma(\frac{n+1}{2})}$ is the surface of the $n$-dimensional unit sphere in $\R^{n+1}$. 
\end{itemize}
}
\end{assumption}
\begin{remark}
Regarding Assumption~\ref{hyp:christoffel}, as mentioned in~\cite[Remark~3.7]{Christoffel17}, $d_r$ is well defined for all $r \in \N$ and the sequence $(d_r)_{r \in \N}$ is nondecreasing. Since $\diam \S \leq \diam \X \leq 1$ and $\vol \S \leq \vol \X \leq 1$, replacing $\diam \S$ and $\vol \S$ by $1$ in~\eqref{eq:christoffel} yields a result similar to Theorem~\ref{th:christoffel}. For a given sequence $(\delta_r)_{r \in \N}$, one can compute recursively $d_r$ as well as the threshold $\alpha_r$ for the Christoffel polynomial.
\end{remark}
\begin{theorem}
\label{th:christoffel}
Let Assumption~\ref{hyp:inv} hold and let $\S \subseteq \X$ be the support of the invariant probability measure $\mu^\star$. Suppose that there exist sequences $(\delta_r)_{r \in \N}$, $(\alpha_r)_{r \in \N}$ and $(d_r)_{r \in \N}$ such that  $\mu^\star$, $\S$, $(\delta_r)_{r \in \N}$, $(\alpha_r)_{r \in \N}$ and $(d_r)_{r \in \N}$ fulfill Assumption~\ref{hyp:christoffel}. For every $r \in \N$, let:
\begin{align}
\label{eq:Sr}
\S^r := \{ \x \in \X : p_{\mu^\star, d_r} (\x) \leq  \frac{\binom{d_r+n}{n}}{\alpha_r}  \} \,.
\end{align}
Then $\lim\limits_{r \to \infty} \sup_{\x \in \S^r} \dist{(\x, \S)} = 0$.
\end{theorem}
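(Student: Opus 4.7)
The proof plan is to adapt the sublevel-set support-recovery argument from~\cite{Christoffel17} to our setting. Under Assumption~\ref{hyp:inv}, the unique invariant probability measure $\mu^\star$ has a well-defined Christoffel polynomial $p_{\mu^\star, d_r}$ because its support $\S$ has nonempty interior (by Assumption~\ref{hyp:christoffel}), which guarantees positive-definiteness of the moment matrices $\M_r(\u^\star)$ for every $r \in \N$. The whole theorem will follow once I establish the set inclusion
\[
\S^r \subseteq \{\x \in \X : \dist(\x, \S) \leq \delta_r\},
\]
since then $\sup_{\x \in \S^r}\dist(\x, \S) \leq \delta_r$, and $\delta_r \downarrow 0$ by Assumption~\ref{hyp:christoffel} gives the result.

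The central step, which I would prove by contrapositive, is: for any $\x \in \X$ with $\dist(\x, \S) > \delta_r$, one has $p_{\mu^\star, d_r}(\x) > \binom{d_r+n}{n}/\alpha_r$. To establish this, I would invoke the variational characterization
\[
p_{\mu^\star, d_r}(\x)^{-1} = \min \Bigl\{ \int_\X q^2 \, d\mu^\star : q \in \R_{d_r}[\x], \ q(\x) = 1 \Bigr\},
\]
and exhibit a test polynomial $q$ of degree $d_r$ with $q(\x) = 1$ that decays exponentially on $\S$. Such a $q$ can be built from a shifted and rescaled Chebyshev polynomial along the direction from $\x$ to $\S$, and the Markov brothers' inequality yields $\sup_{\y \in \S}|q(\y)| \leq 2^{1 - d_r \delta_r/(\delta_r + \diam \S)}$. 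Since $\mu^\star$ is a probability measure, $\int q^2 d\mu^\star \leq \sup_\S q^2$, producing a lower bound on $p_{\mu^\star, d_r}(\x)$. Combined with $\binom{d_r+n}{n} \leq d_r^n (e/n)^n \exp(n^2/d_r)$ and the defining inequality~\eqref{eq:christoffel}, this delivers the required strict inequality.

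The main obstacle I anticipate is the precise bookkeeping between the Markov-type exponential decay factor, the dimensional estimate on $\binom{d_r+n}{n}$, and the geometric constants $\omega_n/\vol \S$ hidden in the definition of $\alpha_r$. The latter arise because the naive bound $\int q^2 d\mu^\star \leq \sup_\S q^2$ is too crude when $\mu^\star$ concentrates far from $\x$; a sharper argument localizes the test polynomial and uses a ball-volume lower bound of the form $\mu^\star(B(\x_0, \delta_r) \cap \S) \geq \alpha_r$ at a carefully chosen anchor $\x_0 \in \interior \S$. Unlike~\cite[Assumption~3.6(a)]{Christoffel17} we do not require $\clo(\interior \S) = \S$, only nonempty interior, so the anchor argument must be tightened: the ball must be placed inside $\interior \S$ rather than near a boundary point, and the estimates adjusted accordingly. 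Matching the constants in~\eqref{eq:christoffel} to the precise form of this localized Markov/Chebyshev construction is the tedious but routine part of the proof.
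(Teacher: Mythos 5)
Your approach is a genuinely different route from the paper's. The paper's own proof is a reduction: it invokes the first part of the proof of Theorem~3.8 in~\cite{Christoffel17} together with Lemma~6.6 therein as a black box, and the \emph{only} new mathematical content it supplies is a verification that the Christoffel polynomial is invariant under invertible affine maps, namely that $p_{\mu^\star,r}(\x)=p_{g_\#\mu^\star,r}(g(\x))$ for $g(\x)=\A\x+\b$, which follows from $\M_r(\tilde\u)=\G\M_r(\u)\G^T$ with $\v_r(g(\x))=\G\v_r(\x)$. That invariance is what lets the estimates of~\cite{Christoffel17}, derived under a normalization of the ambient set, be transported to the present semialgebraic setting; it is the whole of the paper's proof, and your outline never mentions it. You instead propose to reconstruct the~\cite{Christoffel17} argument from first principles via the variational characterization of the Christoffel function, a Chebyshev/Markov test polynomial, and a binomial bound on $\binom{d_r+n}{n}$ matched against~\eqref{eq:christoffel}. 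That skeleton is sound in principle and self-contained, but it stops exactly where the work would begin, and the constant-matching you defer as ``tedious but routine'' is the entire proof along your route; the paper avoids it precisely by citing Lemma~6.6.

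There is also a misplaced worry in your outline. You claim the bound $\int_\X q^2\,d\mu^\star\le\sup_\S q^2$ is ``too crude'' and must be sharpened by a localized estimate $\mu^\star(B(\x_0,\delta_r)\cap\S)\ge\alpha_r$ at an anchor $\x_0\in\interior\S$, attributing the $\delta_r^n\omega_n/\vol\S$ factor in $\alpha_r$ to this sharpening. For the one-sided conclusion claimed here, $\sup_{\x\in\S^r}\dist(\x,\S)\to0$, the naive bound is exactly what is used: one needs a \emph{lower} bound on $p_{\mu^\star,d_r}(\x)$ for $\x$ with $\dist(\x,\S)>\delta_r$, and this comes directly from $\int q^2\,d\mu^\star\le\sup_\S q^2$ because $\mu^\star$ is a probability measure. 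A pointwise lower bound on $\mu^\star$ over small balls is neither available under Assumption~\ref{hyp:christoffel} (nothing forbids $\mu^\star$ from vanishing on open subsets of $\interior\S$) nor needed for this inclusion; such a localization is what drives the \emph{other} Hausdorff inclusion, which is precisely the part of~\cite[Theorem~3.8]{Christoffel17} that the present theorem deliberately drops by weakening Assumption~3.6(a) to mere nonemptiness of $\interior\S$. The $\omega_n/\vol\S$ normalization in $\alpha_r$ is inherited from that two-sided statement; for the one-sided containment it merely makes~\eqref{eq:christoffel} somewhat more demanding than strictly necessary.
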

\begin{proof}
The proof is similar to the first part of the proof of Theorem~3.8 in~\cite{Christoffel17}, which relies on Assumption~\ref{hyp:christoffel} and~\cite[Lemma 6.6]{Christoffel17}. We only need to show that the Christoffel polynomial associated to the invariant measure $\mu^\star$ is affine invariant. Given an invertible affine mapping $g : \x \mapsto \A \x + \b$, with $\A \in \R^{n \times n}$ and $\b \in \R^n$, we prove that the pushforward measure $\tilde{\mu}^\star := g_\# \, \mu^\star $ has all its moments finite, that the associated moment matrix is positive definite and that $p_{\mu^\star, d_r} (\x) = p_{\tilde{\mu}^\star, d_r} (\A \x + \b)$ for all $\x \in \X$. Since $\X$ is compact, $\tilde{\mu}^\star$ has all its moments finite.
Since $g$ is invertible, there exists an invertible matrix $\G$ such that $v_r(g(\x)) = \G v_r(\x)$ for all $\x \in \X$. 
Denoting by $\tilde{\mathbf u}$ the sequence of moments associated to $\tilde{\mu}^\star$, one has by definition of the pushforward measure:
\begin{align*}
\M_r(\tilde{\u}) & = \int_\X \v_r(\x) \v_r(\x)^T d \tilde{\mu}^\star(\x) = \int_\X \v_r(g(\x) ) \v_r(g(\x))^T d \mu^\star(\x) \\
& = \G \int_\X \v_r(\x ) \v_r(\x)^T d \mu^\star(\x) \G^T = \G \M_r(\u) \G^T \,.
\end{align*}
This proves that the moment matrix $\M_r(\tilde{\u})$ is positive definite. 

In addition, one has for all $\x \in \X$, $p_{\tilde{\mu}^\star, r} (g(\x)) := \v_r(g(\x))^T \M_r(\tilde{\u})^{-1} \v_r(g(\x)) = \v_r(\x)^T \G^T \G \M_r(\u)^{-1} \G^T \G \v_r(\x) = p_{\mu^\star, r} (\x) $.
\end{proof}
{
\begin{remark}
\label{rk:discrete}
In the case when the invariant measure is discrete singular, its moment matrix may not be invertible and we can not approximate the support of this measure with the level sets of the Christoffel polynomial.
In this case, one way to recover the support of the measure is to rely on the numerical linear algebra algorithm proposed in~\cite{Henrion05} for detecting global
optimality and extracting solutions of moment problems. This algorithm is implemented in the GloptiPoly~\cite{gloptipoly} software and has been already used in previous work~\cite{HenrionFixpoints} to recover finite cycles in the context of discrete-time systems.
\end{remark}
}
\section{Numerical Experiments}
\label{sec:bench}
Here, we present experimental benchmarks that illustrate our method. 

In Section~\ref{sec:benchac}, we compute the optimal solution $\y^r$ of the primal SDP program~\eqref{eq:sdpac} for a given positive integer $r$ and $p=2$ or $\infty$, as well as the approximate polynomial density $h_p^r$ defined in~\eqref{eq:hrp}.

In Section~\ref{sec:benchsing}, we compute the optimal solution $\u^r$ of the primal SDP program~\eqref{eq:sdpsing} for a given positive integer $r$ as well as $\S^r$, the sublevel set  of the Christoffel polynomial defined in~\eqref{eq:Sr}. {In practice, the computation of $\alpha_r$ in~\eqref{eq:Sr} relies on the following iterative procedure: we select $d_r = r$, $\delta_r = 1$ and increment the value of $\delta_r$ until the inequality~\eqref{eq:christoffel} from Assumption~\ref{hyp:christoffel} is satisfied.}

SDPs~\eqref{eq:sdpac} and~\eqref{eq:sdpsing} are both modeled through GloptiPoly~\cite{gloptipoly} via {\sc Yalmip} toolbox~\cite{YALMIP} available within {\sc Matlab} and interfaced with the SDP solver {\sc Mosek}~\cite{mosek}. Performance results are obtained with an Intel Core i7-5600U CPU ($2.60\, $GHz) with 16Gb of RAM running on Debian~8. 
For each problem, we apply a preprocessing step which consists in scaling data (dynamics, general state constraints) so that the constraint sets become unit boxes. {Note that our theoretical framework (including convergence of the SDP relaxations) only works after assuming that there exists a unique invariant probability measure (Assumption~\ref{hyp:invac} and Assumption~\ref{hyp:inv}). Even though this may not hold for some of the considered systems, numerical experiments show that satisfying results can be obtained when approximating invariant densities and supports of singular measures.}

\subsection{Absolutely Continuous Measures}
\label{sec:benchac}
\subsubsection{Square Integrable Invariant density}
\label{ex:cfd}
First, let us consider the one-dimensional discrete-time polynomial system defined by
\begin{align*}
t^+ & = T(t):= t + w \mod 1 \,,
\end{align*}
with $t$ {being constrained in} the interval $\T := [0, 1]$ and $w \in \R \backslash \mathbb{Q}$ be an arbitrary irrational number. 
{This dynamics corresponds to the circle rotation with an irrational angle $w$ and thus it has a unique invariant measure equal to the restriction of the Lebesgue measure on $\T$~\cite{Weyl1910}, i.e. Assumption~\ref{hyp:invac} is fulfilled here.}

Let us consider the square integrable probability density $h^\star (t) := \frac{3}{4} t^{-1/4}$ and let $F(t) := \int_0^t h^\star(s) d s = t^{3/4}$ be its cumulative distribution function. Since $F$ is invertible, then  $F^{-1}(t) = t^{4/3}$ is distributed according to $h^\star$ and hence the following dynamical system
\begin{align}
\label{eq:cfd}
x^+ & = F^{-1} \circ T \circ F (x)  \,,
\end{align}
{with $x$ being constrained in} the interval $\X := [0, 1]$, has the invariant measure with density $h^\star \in L^2(\X)$. 
%
\begin{figure}[!ht]
\centering
\subfigure[$r=4$]{
\includegraphics[scale=\sizesmallfig]{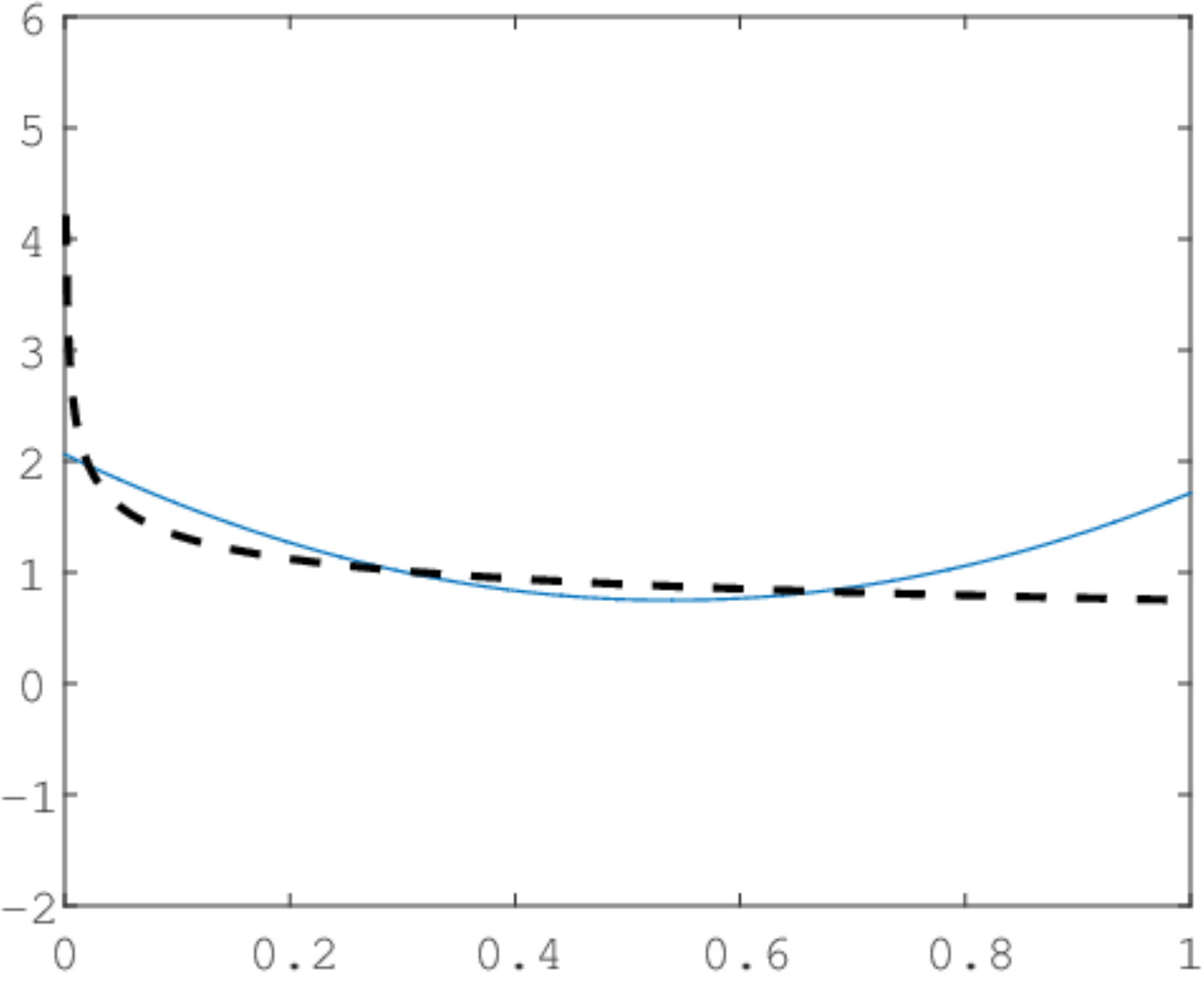}}
\subfigure[$r=6$]{
\includegraphics[scale=\sizesmallfig]{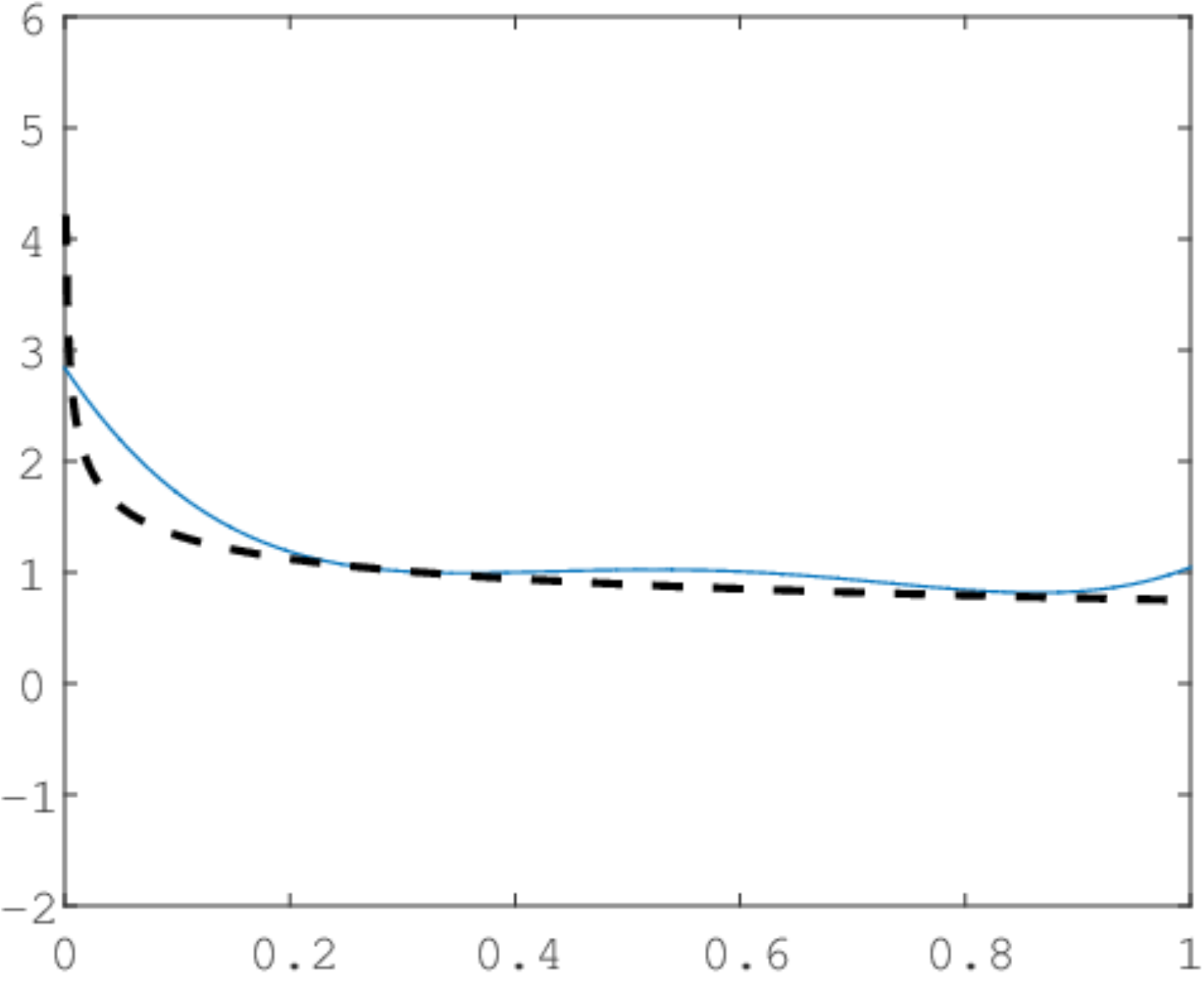}}
\subfigure[$r=8$]{
\includegraphics[scale=\sizesmallfig]{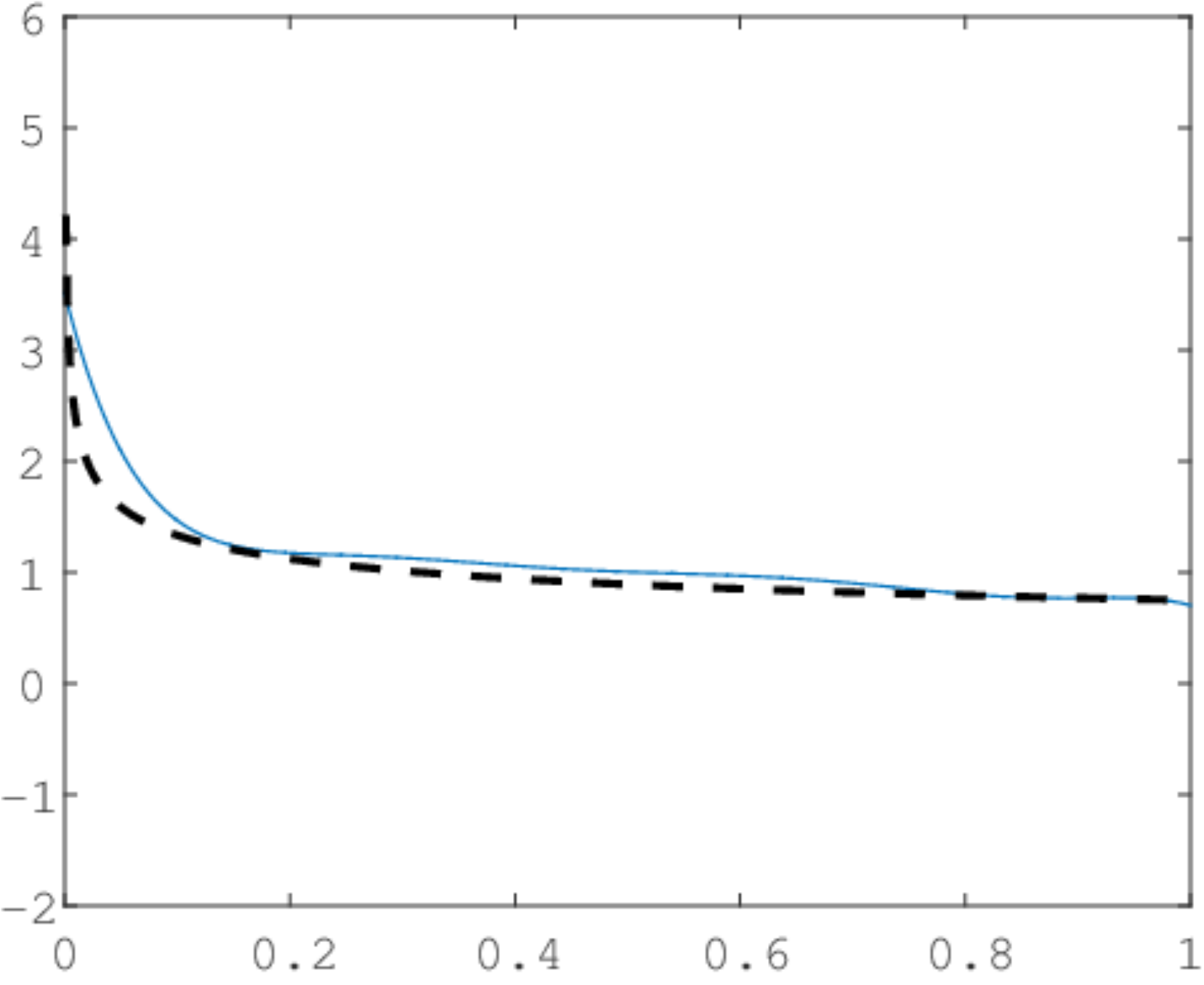}}
\caption{Approximate invariant density for the dynamics from~\eqref{eq:cfd} with corresponding approximations $h_2^r$ (solid curve) of the exact density $h^\star$ (dashed curve) for $r \in \{4,6,8\}$ and $w = \frac{\sqrt{99}}{10}$.}
\label{fig:cfd}
\end{figure}
\if{
Ulam and von Neumann proved in~\cite{Ulam47} that the function $h^\star(x) = \pi^{-1} (x-x^2)^{-1/2}$ is an invariant density for the logistic map. 
Note that the function $h^\star \in L^p(\X)$ for all $p \in [1, 2)$ and so $h^\star \notin L^2(\X)$. However, we could still obtain solutions of SDP~\eqref{eq:sdpac} for $p = 2$, together with the density approximations $h_2^r$, for $r \in \{4,6,8\}$. We conjecture that this apparently strange behavior is due to the fact that Mosek (amongst other SDP solvers) is implemented in double-precision floating-point, thus allowing to provide a solution not violating the precision tolerance of the solver.
}\fi
%

Now, let us take $w \in (0, 1)$.
In this case one has $F^{-1} \circ T \circ F (x) = (x^{3/4} + w)^{4/3}$ if $x^{3/4} + w \leq 1$ and  $F^{-1} \circ T \circ F (x) = (x^{3/4} + w - 1)^{4/3}$ otherwise. 
In order to cast the dynamical system from~\eqref{eq:cfd} as a piecewise polynomial system, we introduce two additional (so-called {\em lifting}) variables $y, z$ and consider the system defined as follows:
\begin{align}
\label{eq:cfdpw}
x^+ = y \quad \text{for } (x,y,z) \in \X_1 \cup \X_2
\end{align}
where $\X_1$ and $\X_2$ are defined by
\begin{align*} 
\label{eq:cfdXi}
\X_1 & :=  \{(x,y,z) \in \R^3 : z (1 - w - z) \geq 0, z^4 = x^3, (z + w)^4 = y^3 \} \,, \\
\X_2 & :=  \{(x,y,z) \in \R^3 : (1 - z) (z + w - 1) \geq 0, z^4 = x^3, (z + w - 1)^4 = y^3 \} \,.
\end{align*} 
Note that the collection $\{\X_1,\X_2\}$ is a partition of $[0, 1]^3$. The variable $z$ represents $x^{3/4}$, for all $x \in [0, 1]$.
The variable $y$ represents either $(x^{3/4} + w)^{4/3}$ on $\X_1$ or $(x^{3/4} + w - 1)^{4/3}$ on $\X_2$. Using the results from Section~\ref{sec:piecewise} with $I = \{1,2\}$, we performed numerical experiments with the irrational number $w = \frac{\sqrt{99}}{10}$.
The approximate density $h_2^r$ {obtained in~\eqref{eq:hrp}} from the $r$ first moments (for $r=4,6,8$) and the exact density $h^\star$ are displayed on Figure~\ref{fig:cfd}. These numerical results indicate that the density approximations become tighter when the value of $r$ increases. 
%
%
\subsubsection{Piecewise Systems}
\label{ex:pw}
Next, we consider three discrete-time piecewise systems coming respectively from Example~3, Example~4 and Example~5 in~\cite{Koda82}. {These three systems are known to have unique invariant densities~\cite[Section 4]{Koda82}, thus Assumption~\ref{hyp:invac} is fulfilled here.}

\begin{equation}
\label{eq:koda3}
x^+ := 
\begin{cases}
\frac{2 x}{1 - x^2}  & \text{if} \ x \in \X_1 := [0, \sqrt{2}-1] \,,\\
\frac{1 - x^2}{2 x}   &\text{if} \ x \in \X_2:=[\sqrt{2}-1, 1] \,,
\end{cases}
\end{equation}
\begin{equation}
\label{eq:koda4}
x^+ := 
\begin{cases}
\frac{2 x}{1 - x}  & \text{if} \ x \in \X_1 := [0, \frac{1}{3}] \,,\\
\frac{1 - x}{2 x}   &\text{if} \ x \in \X_2:=[\frac{1}{3}, 1] \,,
\end{cases}
\end{equation}
\begin{equation}
\label{eq:koda5}
x^+ := 
\begin{cases}
(\frac{1}{8} + 2 (x-\frac{1}{2})^3)^{1/3} +\frac{1}{2}  & \text{if} \ x \in \X_1 := [0, \frac{1}{2}] \,,\\
(\frac{1}{8} - 2 (x-\frac{1}{2})^3)^{1/3} +\frac{1}{2}    &\text{if} \ x \in \X_2:=[\frac{1}{2}, 1] \,.
\end{cases}
\end{equation}
As in Section~\ref{ex:cfd}, we introduce an additional lifting variable to handle either the division or the cube root operator. We compute the approximate density $h_\infty^r$ from~\eqref{eq:hrp} by solving SDP~\eqref{eq:pwsdp}.
For the system from~\eqref{eq:koda3}, SDP~\eqref{eq:pwsdp} yields at $r = 6$ (to four significant digits): 
\[ 
y_0 = 1\,,\ y_1 = 0.3924 \,,\ y_2 = 0.2463 \,,\  y_3 = 0.1827\,,\ y_4 = 0.1476 \,,\ y_5 = 0.1254 \,,\ y_6 = 0.1101 
\,.\] 
These approximate values are close to the moments associated to the exact invariant density $h^\star = \frac{4}{\pi} \cdot \frac{1}{1+x^2}$: 
\[
y_0^\star = 1\,,\ y_1^\star = 0.4412 \,,\ y_2^\star = 0.2732 \,,\  y_3^\star = 0.1954 \,,\ y_4^\star = 0.1512 \,,\ y_5^\star = 0.1230 \,,\ y_6^\star = 0.1035
\,.\]
The approximate density $h_\infty^6$ and the exact density $h^\star$ are displayed on Figure~\ref{fig:pw}(a). Similar results are displayed on Figure~\ref{fig:pw}(b) for the system from~\eqref{eq:koda4} and SDP~\eqref{eq:pwsdp} yields:
\[ 
y_0 = 1\,,\ y_1 = 0.3628 \,,\ y_2 = 0.2181 \,,\  y_3 = 0.1582 \,,\ y_4 = 0.1262 \,,\ y_5 = 0.1063 \,,\ y_6 = 0.0929 
\,,\]
with the moments of the exact density $h^\star(x) = \frac{2}{(1+x)^2}$ being:
\[
y_0^\star = 1\,,\ y_1^\star = 0.3863 \,,\ y_2^\star = 0.2274 \,,\  y_3^\star = 0.1589 \,,\ y_4^\star = 0.1215 \,,\ y_5^\star = 0.0981 \,,\ y_6^\star = 0.0822
\,.\]
For the system from~\eqref{eq:koda5}, the second order SDP relaxation already provides a very accurate approximation of the exact density $h^\star(x) = 12 (x-\frac{1}{2})^2)$, as shown in Figure~\ref{fig:pw}(c).


\begin{figure}[!ht]
\centering
\subfigure[$r=6$, system from~\eqref{eq:koda3}]{
\includegraphics[scale=\sizesmallfig]{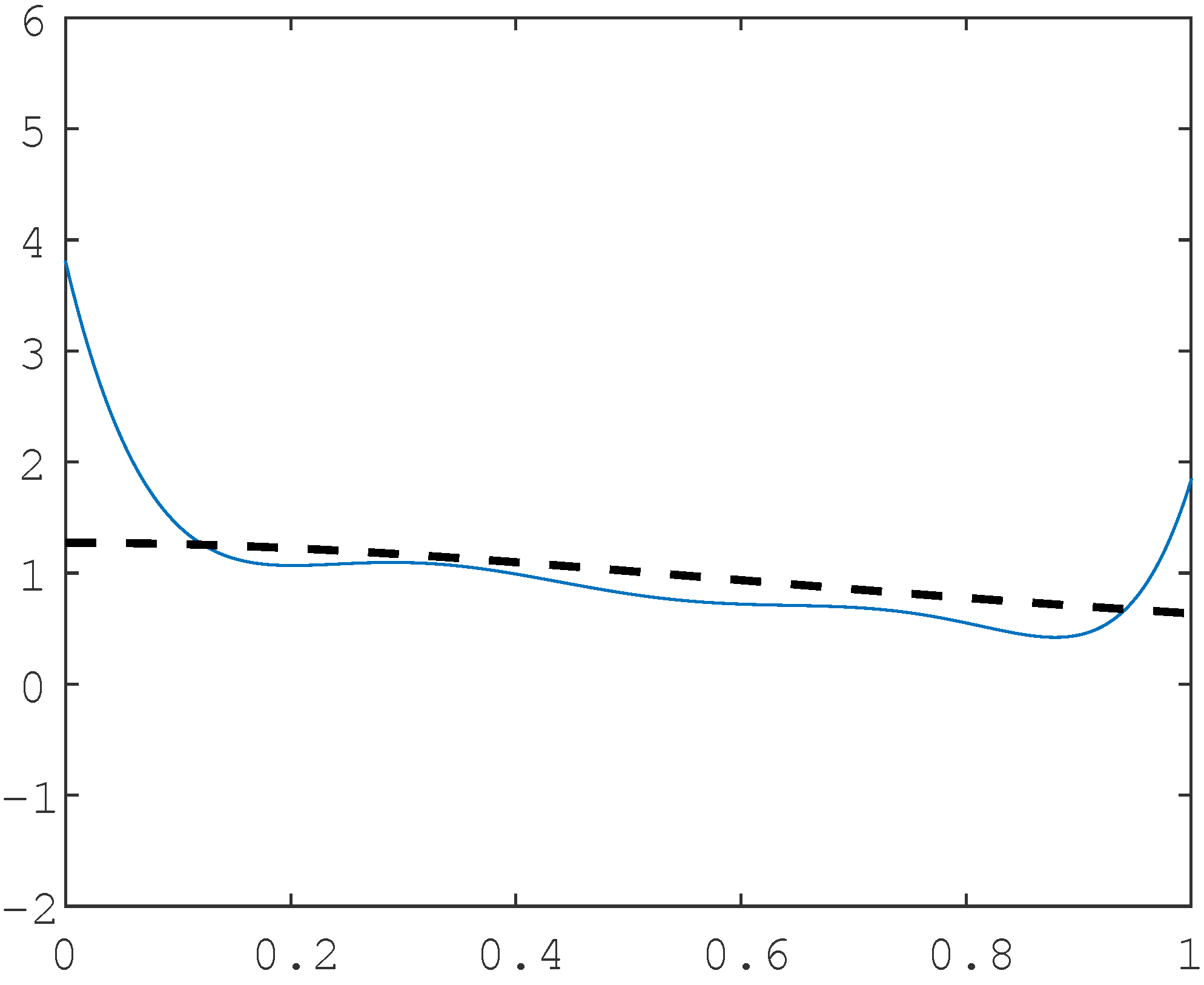}}
\subfigure[$r=6$, system from~\eqref{eq:koda4}]{
\includegraphics[scale=\sizesmallfig]{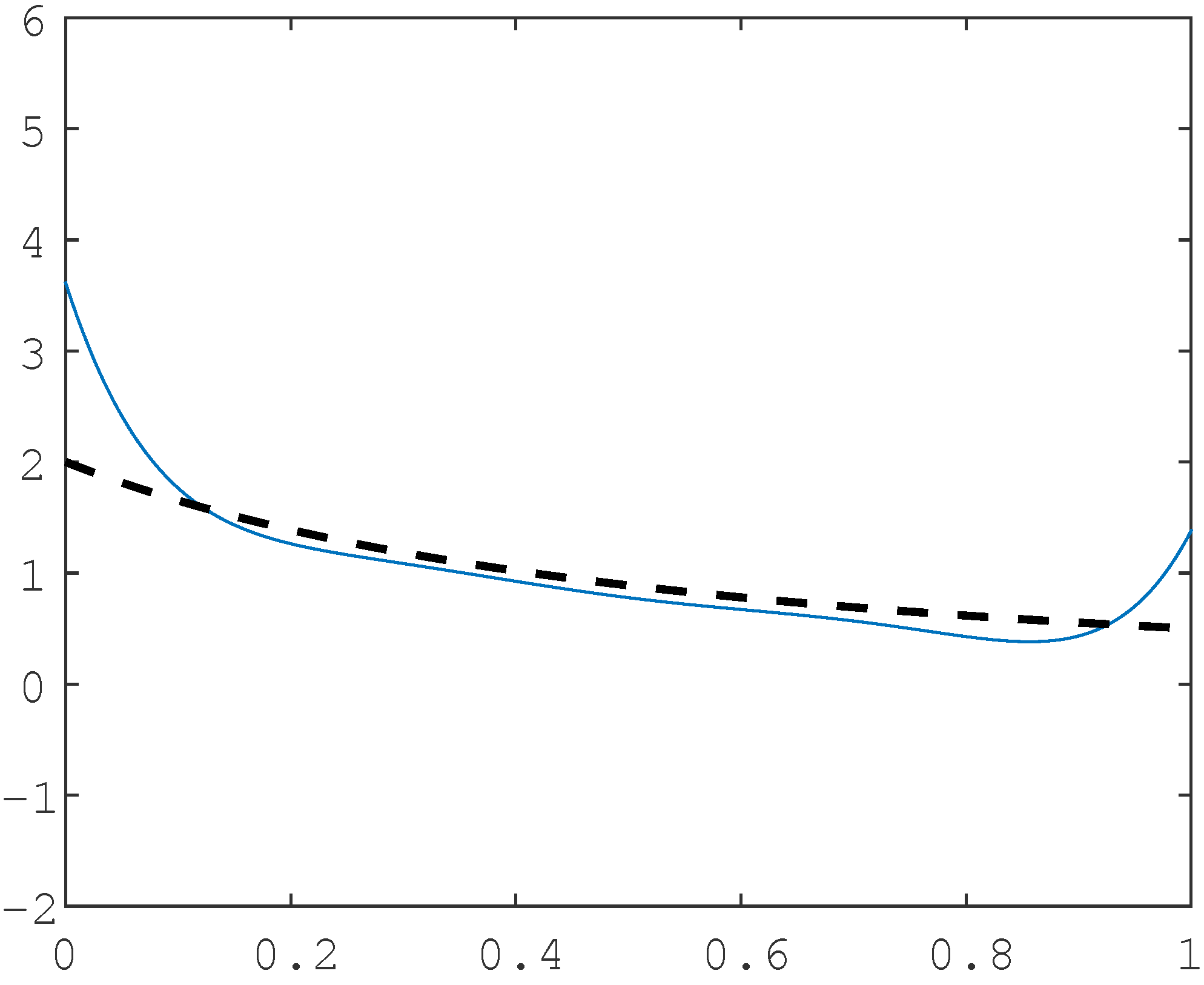}}
\subfigure[$r=2$, system from~\eqref{eq:koda5}]{
\includegraphics[scale=\sizesmallfig]{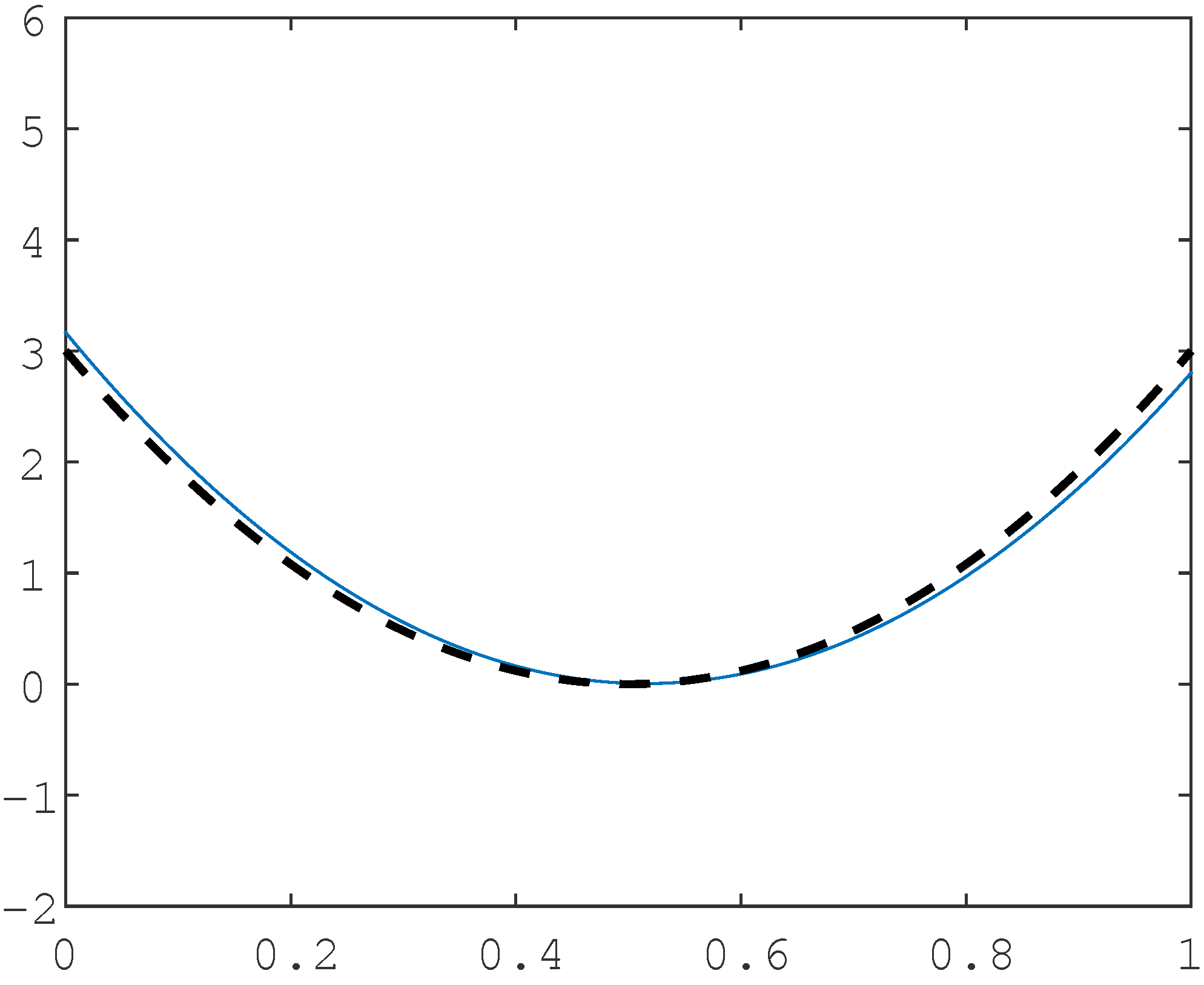}}
\caption{Approximate invariant density for the piecewise systems defined respectively in~\eqref{eq:koda3},~\eqref{eq:koda4} and~\eqref{eq:koda5} with corresponding approximations $h_\infty^r$ (solid curve) of the exact density $h^\star$ (dashed curve).}
\label{fig:pw}
\end{figure}

\if{
Next, we consider the one-dimensional discrete-time piecewise affine system defined by
\begin{align*}
x^+ := 
\begin{cases}
2 x & \text{if} \ x \in \X_1 \,,\\
1 + \frac{3}{2} (\frac{1}{2} - x) &\text{if} \ x \in \X_2 \,.
\end{cases}
\end{align*}
with general state constraints within the intervals $\X_1 := [0, \frac{1}{2}]$ and $\X_2 := [\frac{1}{2}, 1]$.

Using the results from Section~\ref{sec:piecewise} with $I = \{1,2\}$, $f_1(x) = 2 x$ and $f_2(x) = 1 + \frac{3}{2} (\frac{1}{2} - x)$, we compute the approximate density $h_p^r$ {from~\eqref{eq:hrp}} for $r=3,5,7$ and $p = \infty$ by solving SDP~\eqref{eq:pwsdp}. The approximate density $h_p^r$ and the exact density $h^\star = \mathbf{1}_{[\frac{1}{4}, \frac{1}{2}]} + \frac{3}{2} \mathbf{1}_{[\frac{1}{2}, 1]}$ (see~\cite[Example~2]{Koda82} for more details) are displayed on Figure~\ref{fig:pw}.
\begin{figure}[!ht]
\centering
\subfigure[$r=3$]{
\includegraphics[scale=\sizesmallfig]{koda2r3.pdf}}
\subfigure[$r=5$]{
\includegraphics[scale=\sizesmallfig]{koda2r5.pdf}}
\subfigure[$r=7$]{
\includegraphics[scale=\sizesmallfig]{koda2r7.pdf}}
\caption{Approximate invariant density for the piecewise affine map from Example~\ref{ex:pw} with corresponding approximations $h_p^r$ (solid curve) of the exact density $h^\star$ (dashed curve) for $r \in \{3,5,7\}$ and $p = \infty$.}
\label{fig:pw}
\end{figure}

Numerical experiments indicate that the $L^2$-norm of $h_p^r$ is getting closer to the one of $h^\star$ for higher relaxation orders, {consistently with Lemma~\ref{th:density2}}.
However, as emphasized by the plots in Figure~\ref{fig:pw}, it is eventually difficult to approximate the piecewise step function $h^\star \in L^\infty(\X)$ with polynomials of increasing degrees in order to obtain a pointwise convergence behavior. {Such non-uniform convergence behaviors are related to an effect known as the Gibbs phenomenon. We refer to~\cite[Section~5.1]{HDM14meansquared} for a discussion in a similar context.}
To overcome these numerical issues, one way would be to decompose $\X$ into specific subdomains.
}\fi
%
\subsubsection{Rotational Flow Map}
\label{ex:rot}
We consider the rotational flow system, i.e.~the two-dimensional continuous-time  system defined by
\begin{align*}
\dot{x_1} & =  \ \ x_2 \,, \\
\dot{x_2} & = -  x_1  \,.
\end{align*}
with general state constraints within the unit disk $\X := \{ \x \in \R^2 : \|\x\|_2 \leq 1 \}$. 
The restriction of the Lebesgue measure to $\X$ is invariant for the rotational flow map $f(\x) := (x_2,-x_1)$. Indeed, denoting by $\text{n}(\x)$ the outer normal vector to the unit circle $\partial \X$ at $\x$, the Green-Ostrogradski formula yields the following, for all $v \in \mathcal{C}^1(\X)$:
\[
\int_\X \mathrm{grad}\: v \cdot f(\x) d \x = 
\int_{\partial \X} v(\x) \bigl(\underbrace{ f(\x) \cdot d \text{n}(\x) }_{ = 0} \bigr) 
-
\int_\X v(\x) \underbrace{\mathrm{div}\: f(\x)}_{=0} d \x = 0 \,,
\]
{which shows the invariance of $\lambda_\X$.}

For $p = \infty$, SDP~\eqref{eq:sdpac} yields at $r = 2$ (to four significant digits): $y_0 = 1$, $y_1 = y_2 = 0$ and we obtain the approximate polynomial density $h_\infty^r(\x) = 1$, matching the exact invariant density $h^\star$. 

%
\subsection{Singular Measures}
\label{sec:benchsing}
\subsubsection{H\'enon Map}
\label{ex:henon}
The H\'enon map is a famous example of two-dimensional discrete-time systems that exhibit a chaotic behavior.
The system is defined as follows:
\begin{align*}
x_1^+ & =  1 - a x_1^2 + x_2 \,, \\
x_2^+ & = b x_1  \,.
\end{align*}
with general state constraints within the box $\X := [-3, 1.5] \times [-0.6, 0.4]$. 
\begin{figure}[!ht]
\centering
\subfigure[$r=4$]{
\includegraphics[scale=\sizesmallfig]{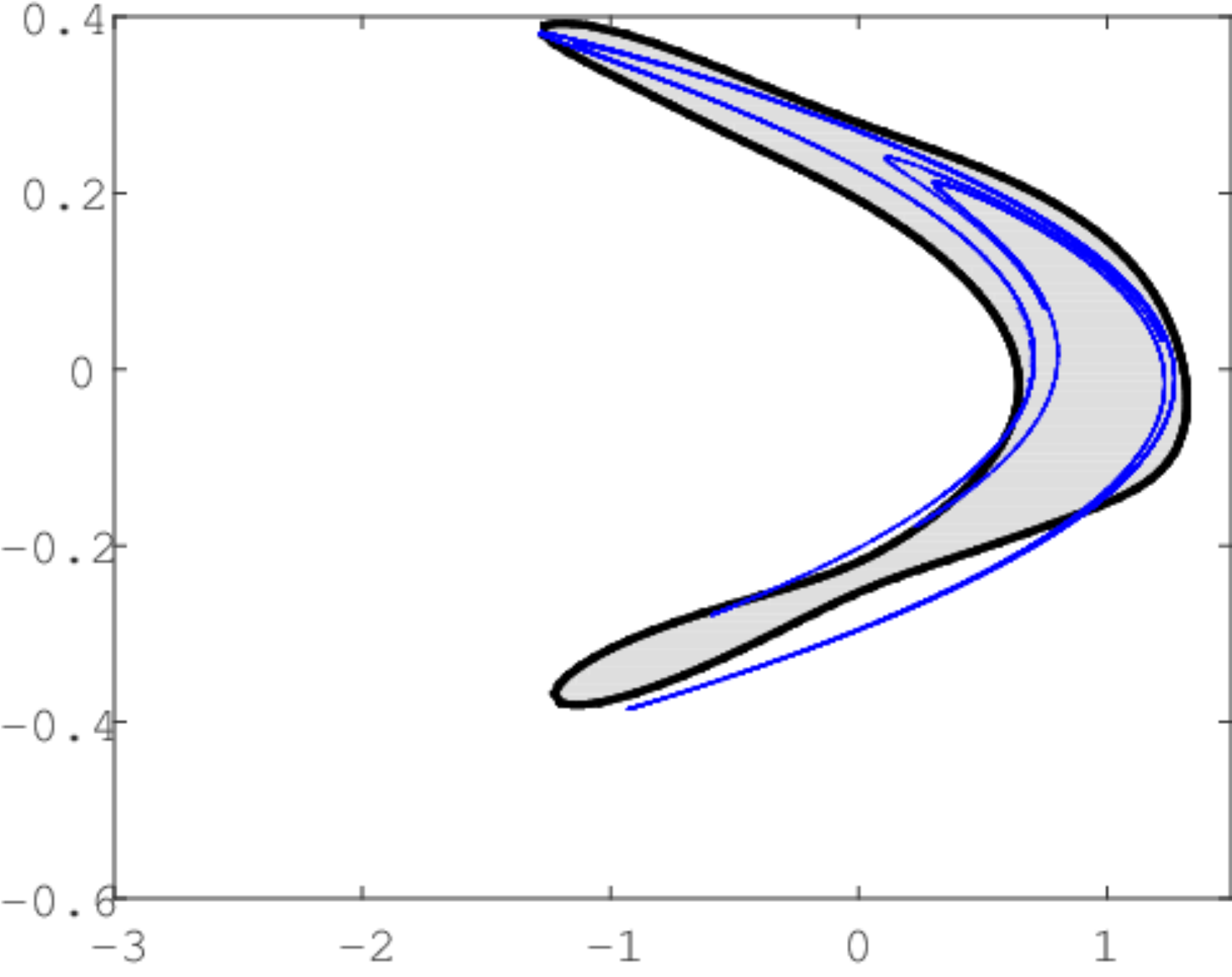}}
\subfigure[$r=6$]{
\includegraphics[scale=\sizesmallfig]{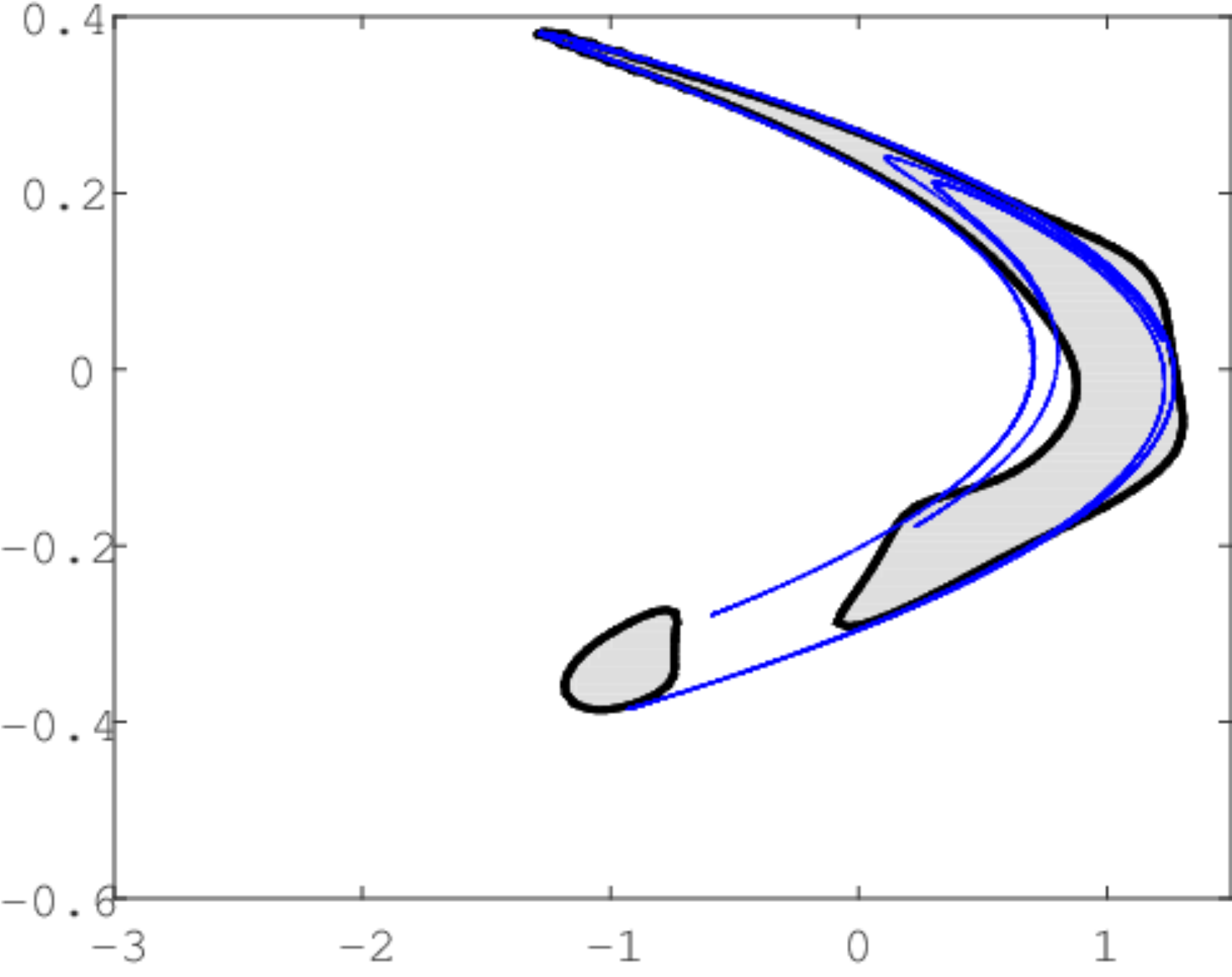}}
\subfigure[$r=8$]{
\includegraphics[scale=\sizesmallfig]{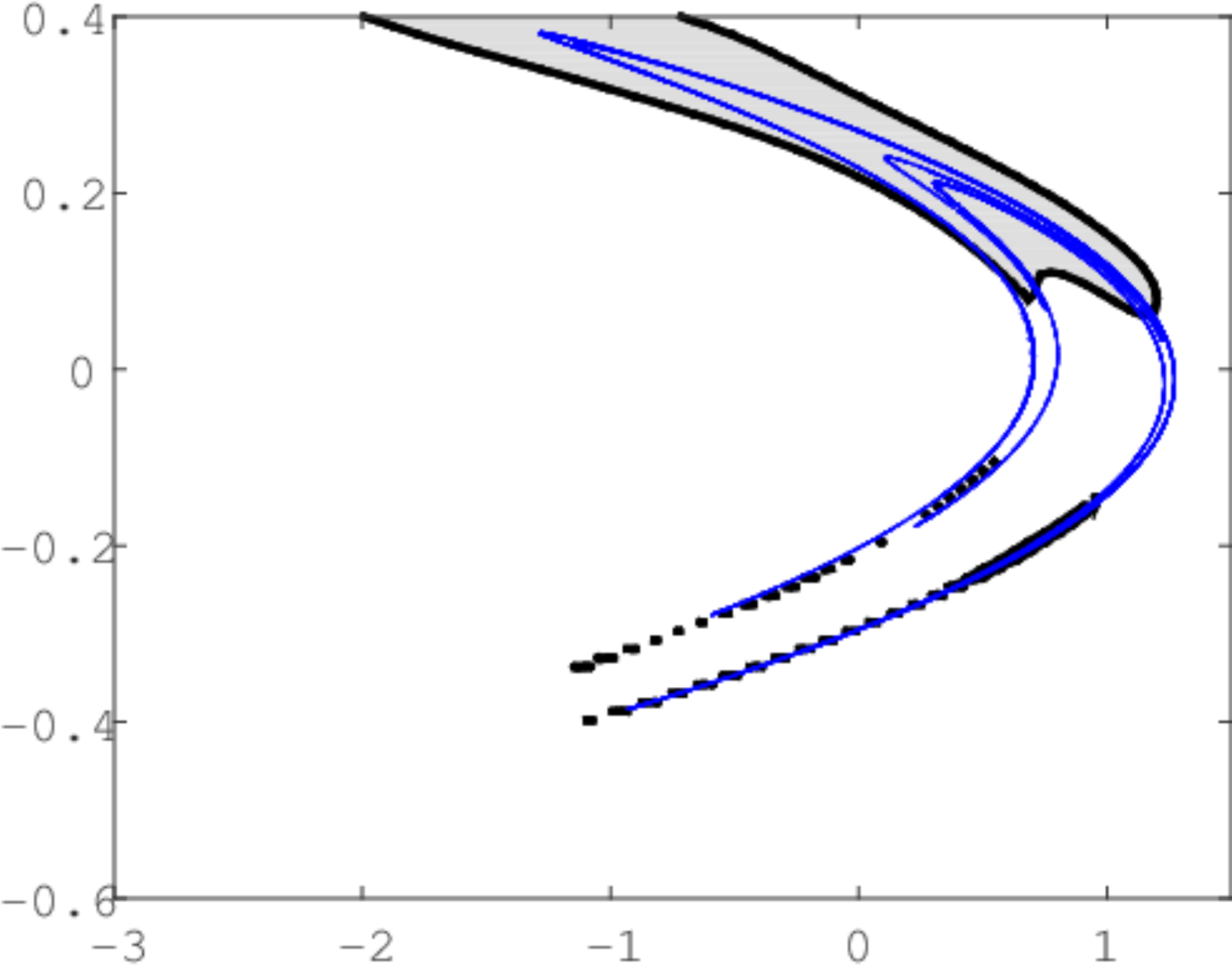}}
\caption{H\'enon attractor (blue) and approximations $\S^r$ (light gray) for the support of the invariant measure w.r.t.~the map from Example~\ref{ex:henon} for $r \in \{4,6,8\}$, $a = 1.4$ and $b = 0.3$.}
\label{fig:henon}
\end{figure}
For $a = 1.4$ and $b = 0.3$, H\'enon proves in~\cite{Henon76} that the sequence of points obtained by iteration of this map from an initial point can either diverge or tend to a strange attractor being the product of a one-dimensional manifold by a Cantor set. 

Figure~\ref{fig:henon} displays set approximations $\S^r$ {obtained from~\eqref{eq:Sr}} of the support of the measure invariant w.r.t.~the H\'enon map, for  $r \in \{4,6,8\}$. For comparison purpose, we also represented the ``true'' H\'enon attractor by displaying the sequence of points obtained after hundred iterations of the map while starting from random sampled initial conditions within the disk of radius $0.1$ and center $[-1, 0.4]$. These numerical experiments show that the level sets of the Christoffel polynomial provide fairly tight approximations of the attractor for modest values of the relaxation order.

\subsubsection{Van der Pol Oscillator}
\label{ex:vanderpol}
The Van der Pol oscillator is an example of an oscillating system with nonlinear damping~\cite{VanderPol26}. The dynamics are given by the following second-order ordinary differential equation:
\begin{align*}
\ddot{x_1} - a (1 - x_1^2) \dot{x_1} + x_1 & =  0 \,.
\end{align*}
By setting $x_2 = \dot{x_1}$, one can reformulate this one-dimensional system into a two-dimensional continuous-time  system:
\begin{align*}
\dot{x_1} & =  x_2 \,, \\
\dot{x_2} & = a (1 -x_1^2) x_2 - x_1  \,.
\end{align*}
When $a > 0$, there exists a limit cycle for the system.
Here, we consider $a = 0.5$ and general state constraints within the box $\X := [-3, 3] \times [-4, 4]$. 
\begin{figure}[!ht]
\centering
\subfigure[$r=4$]{
\includegraphics[scale=\sizesmallfig]{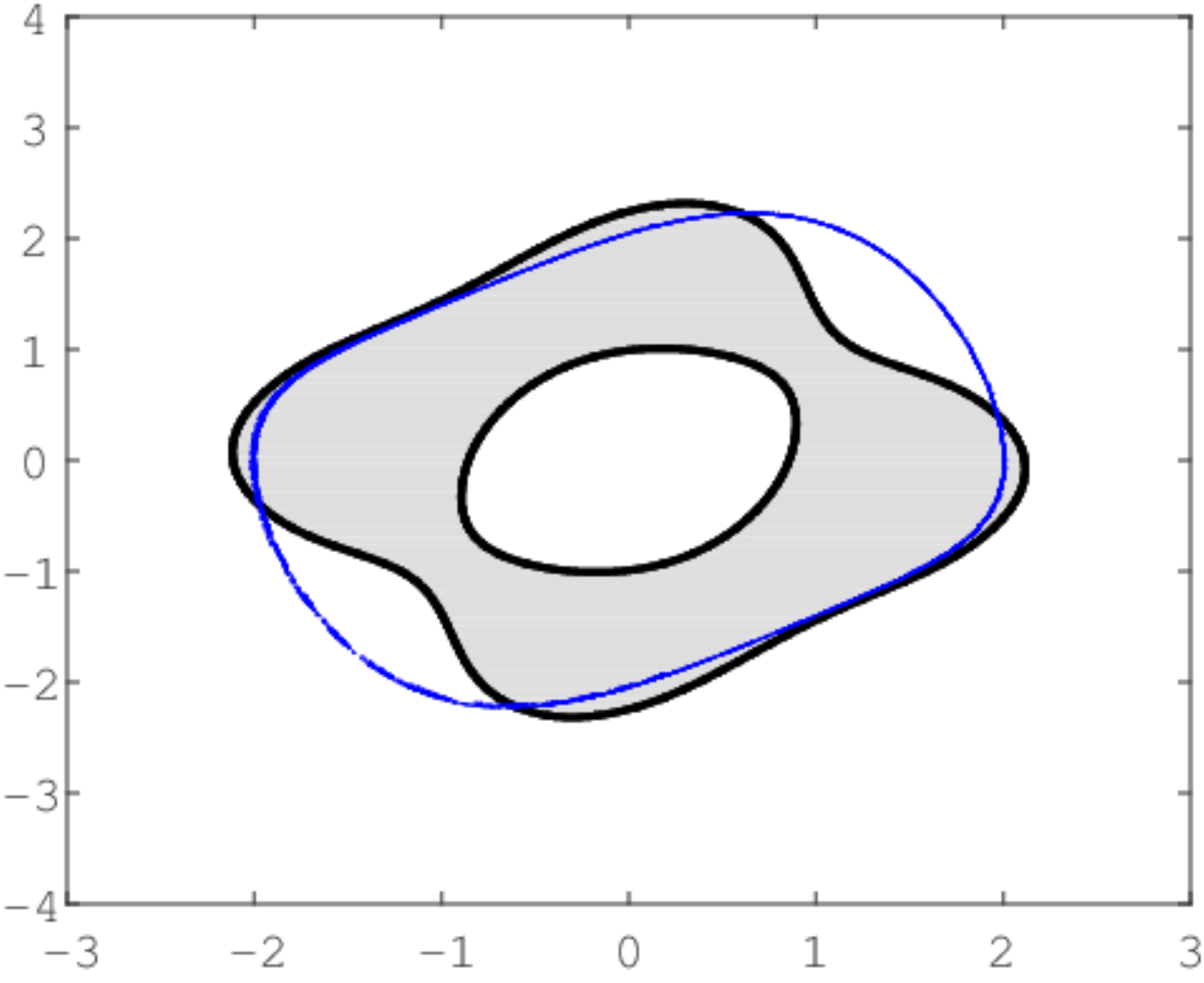}}
\subfigure[$r=6$]{
\includegraphics[scale=\sizesmallfig]{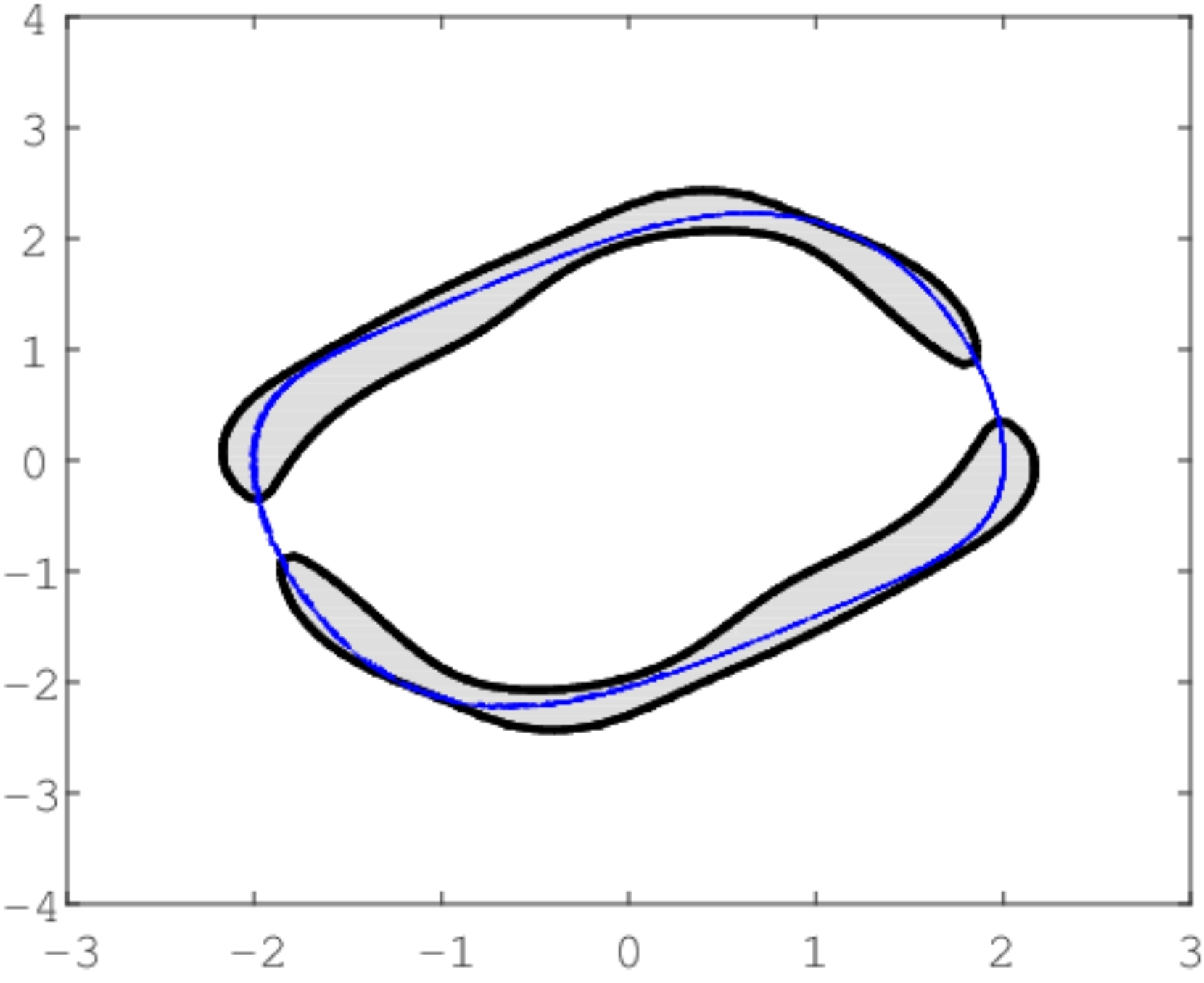}}
\subfigure[$r=8$]{
\includegraphics[scale=\sizesmallfig]{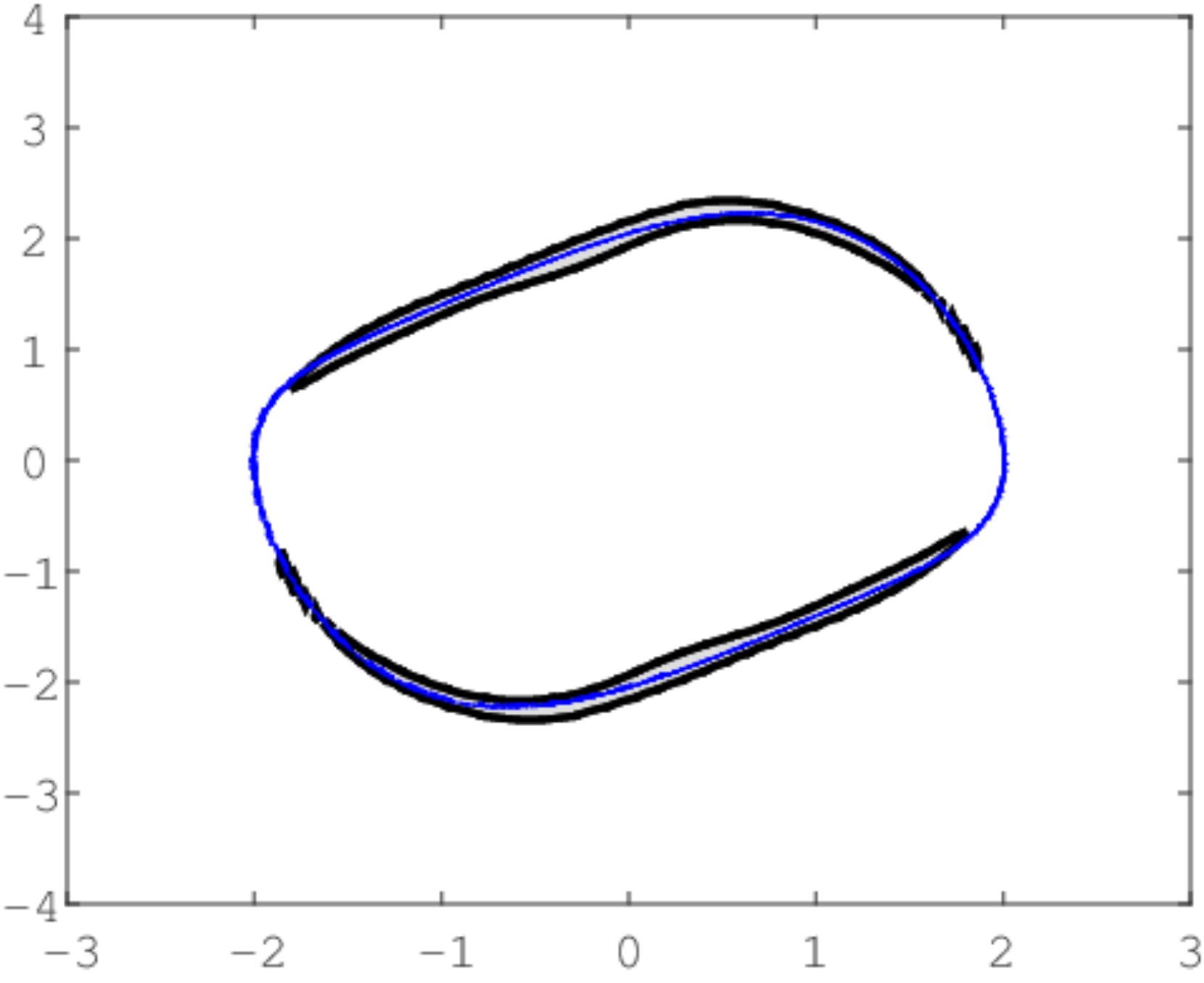}}
\caption{Van der Pol attractor (blue) and approximations $\S^r$ (light gray) for the support of the invariant measure w.r.t.~the map from Example~\ref{ex:vanderpol} for $r \in \{4,6,8\}$ and $a = 0.5$.}
\label{fig:vanderpol}
\end{figure}

Figure~\ref{fig:vanderpol} shows set approximations $\S^r$ {obtained from~\eqref{eq:Sr}} of the support of the measure invariant w.r.t.~the Van der Pol map. As for Example~\ref{ex:henon}, we also represent the ``true'' limit cycle after performing a numerical integration of the Van der Pol system from $t_0 = 0$ to $T = 20$ with random sampled initial conditions within the disk of radius $0.1$ and center $[1, -1]$. This numerical approximation is done with the \texttt{ode45} procedure available inside {\sc Matlab}. Once again, the plots exhibit a quite fast convergence behavior of the approximations $\S^r$ of the invariant measure support to the limit cycle when $r$ increases.
\subsubsection{Arneodo-Coullet System}
\label{ex:arneodo}
Finally, we investigate the Arneodo-Coullet system~\cite{Arneodo85}, representing the dynamics of a forced oscillator. This oscillator can be described by the following three-dimensional time-continuous system:
\begin{align*}
\dot{x_1} & =  x_2 \,, \\
\dot{x_2} & =  x_3 \,, \\
\dot{x_3} & = -a x_1 - b x_2 - x_3 + c x_1^3 \,.
\end{align*}
with general state constraints within the box $\X := [-4, 4] \times [-8, 8] \times [-12, 12]$. 
\begin{figure}[!ht]
\centering
\subfigure[Arneodo-Coullet attractor (blue)]{
\includegraphics[scale=\sizefig]{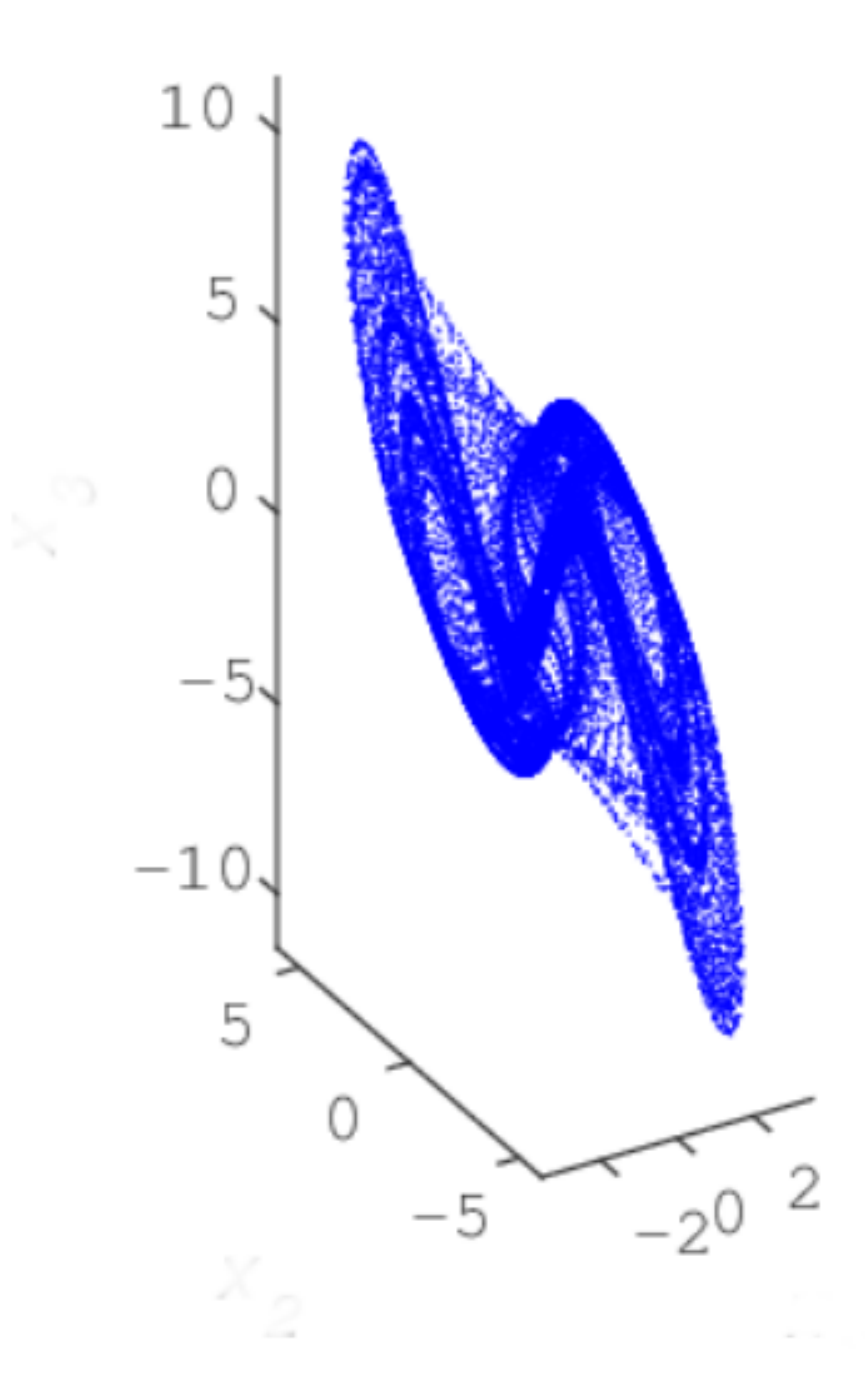}}
\subfigure[$\S^4$ (red)]{
\includegraphics[scale=\sizefig]{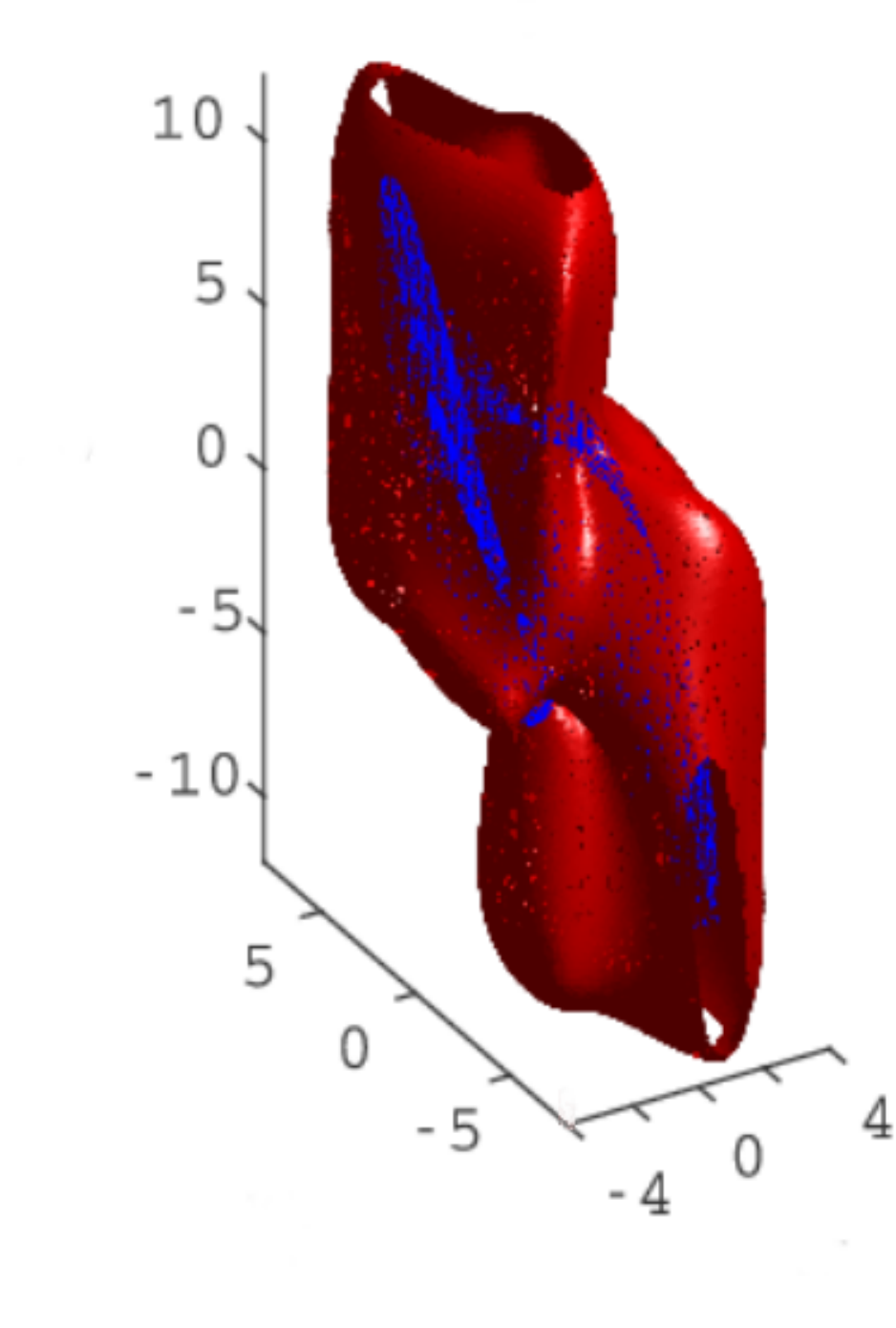}}
\caption{Arneodo-Coullet attractor (blue)
and approximations $\S^4$ (red) for the support of the invariant measure w.r.t.~the map from Example~\ref{ex:arneodo} for $a =-5.5$, $b = 3.5$ and $c = -1$.}
\label{fig:arneodo}
\end{figure}

This system exhibits a chaotic behavior for certain parameter values of $a$, $b$ and $c$, for instance with $a =-5.5$, $b = 3.5$ and $c = -1$.

As for Example~\ref{ex:vanderpol}, we represent the ``true'' attractor after performing numerical integration with the \texttt{ode45} procedure, from $t_0 = 0$ to $T = 1000$ with random sampled initial conditions within the disk of radius $0.1$ and center $[1, 1, 1]$. On Figure~\ref{fig:arneodo}, we observe that the approximation $\S^4$ {obtained from~\eqref{eq:Sr}} provides a reasonably good estimate of the chaotic attractor. The results obtained for higher relaxation orders were not satisfying because of the ill-conditioning of the moment matrix.
\section{Conclusion and Perspectives}
\label{sec:end}
We can summarize the contributions of this article as follows:
\begin{itemize}
\item We propose two methods to characterize invariant measures for discrete-time and continuous-time systems with polynomial dynamics and semialgebraic state constraints. Our approach can also be extended to piecewise-polynomial systems;
\item The first method allows to approximate as close as desired {the moments} of an absolutely continuous invariant measure, under the assumption that this density is square integrable. The second method allows to approximate as close as desired the support of a singular invariant measure by using sublevel sets of the Christoffel polynomial constructed from the moment matrix;
\item Each method relies on solving a hierarchy of finite-dimensional semidefinite programs. While the convergence of the hierarchy is guaranteed in theory, each program can be solved  in practice, thanks to public-available solvers.
\end{itemize}
Our two methods for recovering invariant densities or singular invariant measure supports both rely on an extension of Lasserre's hierarchy of semidefinite relaxations, initially introduced in the context of polynomial optimization.
Numerical experiments show that our two methods already yield fairly good approximations of the moments of invariant measures at modest relaxation orders. One further research direction would be to investigate scaling both methods to large-size systems, when either sparsity or symmetry occurs. 

Regarding the approximation quality of the invariant densities, our first method could produce more satisfactory results by using Chebyshev polynomials or rational function bases as an alternative to the monomials base. 
Regarding the approximation quality of the supports of singular invariant measures, our second method would practically yield better estimates by  handling the issue related to the ill-conditioning of the moment matrix. 

Next, we shall devote research efforts to extend the  uniform convergence properties of the Christoffel polynomial studied in~\cite{Christoffel17} to  certain classes of singular measures. Another track of investigation would be to develop a similar hierarchy of semidefinite programs to study the support of atomic discrete measures corresponding to finite cycles. A first attempt made in~\cite{HenrionFixpoints} consists in setting the objective function of these programs as particular linear moment combinations. However, the question raised  about choosing the adequate objective function to recover a given  finite cycle still remains open.

\paragraph{Acknowledgements} The authors would like to specially acknowledge the precious help of Milan Korda for providing the example from Section~\ref{ex:cfd} as well as for his feedback and suggestions.

\bibliographystyle{plain}

\begin{thebibliography}{10}

\bibitem{Switch13}
M.~R. Abdalmoaty, D.~Henrion, L.~Rodrigues.
\newblock {Measures and LMIs for optimal control of piecewise-affine systems}.
\newblock {Proceedings of the European Control Conference (ECC)}, 2013.

\bibitem{AFP00}
L.~Ambrosio, N.~Fusco, D.~Pallara.
\newblock {Functions of bounded variation and free discontinuity problems}.
\newblock Clarendon Press, 2000.

\bibitem{mosek}
E.~D. Andersen, K.~D. Andersen.
\newblock {The Mosek Interior Point Optimizer for Linear Programming: An
  Implementation of the Homogeneous Algorithm}.
\newblock In H. Frenk, K. Roos, T. Terlaky, S. Zhang,
  (Editors) {High Performance Optimization}, Vol.~33 of {Applied
  Optimization}, Springer, 2000.

\bibitem{Arneodo85}
A.~{Arneodo}, P.~H. {Coullet}, E.~A. {Spiegel}, C.~{Tresser}.
\newblock {Asymptotic chaos}.
\newblock {Physica D Nonlinear Phenomena}, 14:327--347, 1985.


\bibitem{aston2014computing}
P.~Astod, O.~Junge.
\newblock {Computing the invariant measure and the Lyapunov exponent for
  one-dimensional maps using a measure-preserving polynomial basis}.
\newblock {Mathematics of Computation}, 83(288):1869--1902, 2014.

\bibitem{alexander2002course}
A.~Barvinok.
\newblock {{A Course in Convexity}}.
\newblock American Mathematical Society, 2002.

\bibitem{dellnitz2001algorithms}
M.~Dellnitz, G.~Froyland, O.~Junge.
\newblock The algorithms behind {GAIO}—{S}et oriented numerical methods for
  dynamical systems.
\newblock In {Ergodic theory, analysis, and efficient simulation of
  dynamical systems}, pages 145--174. Springer, 2001.

\bibitem{dellnitz1997exploring}
M.~Dellnitz, A.~Hohmann, O.~Junge, M.~Rumpf.
\newblock Exploring invariant sets and invariant measures.
\newblock {CHAOS: An Interdisciplinary Journal of Nonlinear Science},
  7(2):221--228, 1997.

\bibitem{dellnitz2017set}
M.~Dellnitz, S.~Klus, A.~Ziessler.
\newblock A set-oriented numerical approach for dynamical systems with
  parameter uncertainty.
\newblock {SIAM Journal on Applied Dynamical Systems}, 16(1):120--138,
  2017.

\bibitem{Henon76}
M.~H{\'e}non.
\newblock A two-dimensional mapping with a strange attractor.
\newblock {Communications in Mathematical Physics}, 50(1):69--77, 1976.

\bibitem{HenrionFixpoints}
D.~Henrion.
\newblock Semidefinite characterisation of invariant measures for
  one-dimensional discrete dynamical systems.
\newblock {Kybernetika}, 48(6):1089--1099, 2012.

\bibitem{HK14roa}
D.~Henrion, M.~Korda.
\newblock {Convex Computation of the Region of Attraction of Polynomial Control
  Systems}.
\newblock {IEEE Transactions on Automatic Control}, 59(2):297--312, 2014.

\bibitem{HLS09vol}
D.~Henrion, J. B. Lasserre, C.~Savorgnan.
\newblock {Approximate Volume and Integration for Basic Semialgebraic Sets}.
\newblock {SIAM Review}, 51(4):722--743, 2009.

\bibitem{Henrion05}
D.~Henrion, J.-B. Lasserre.
\newblock {Detecting Global Optimality and Extracting Solutions in GloptiPoly}.
\newblock In {Positive Polynomials in Control}, pages 293--310. Springer, 2005.

\bibitem{gloptipoly}
D.~Henrion, J.-B. Lasserre, J.~Löfberg.
\newblock {GloptiPoly 3: moments, optimization and semidefinite programming}.
\newblock {Optimization Methods and Software}, 24(4-5):761--779, 
  2009.

\bibitem{HDM14meansquared}
D.~Henrion, J.-B. Lasserre, M.~Mevissen.
\newblock Mean squared error minimization for inverse moment problems.
\newblock {Applied Mathematics {\&} Optimization}, 70(1):83--110, 2014.

\bibitem{Koda82}
T.~Kohda, K.~Murao.
\newblock Piecewise polynomial galerkin approximation to invariant densities of
  one-dimensional difference equations.
\newblock {Electronics and Communications in Japan (Part I:
  Communications)}, 65(6):1--11, 1982.

\bibitem{KHJ13mci}
M.~Korda, D.~Henrion, C.~N. Jones.
\newblock Convex computation of the maximum controlled invariant set for
  discrete-time polynomial control systems.
\newblock In {Proceedings of the IEEE Conference on Decision and Control}, pages 7107--7112, 2013.

\bibitem{Korda}
M. Korda, D. Henrion, I. Mezi\'c.
Convex computation of extremal invariant measures of nonlinear dynamical systems and Markov processes.
Submitted for publication, 2018.

\bibitem{Krylov37}
N.~Kryloff, N.~Bogoliouboff.
\newblock {La théorie générale de la mesure dans son application à l'étude des systèmes dynamiques de la mécanique non linéaire}.
\newblock {Annals of Mathematics}, 38(1):65--113, 1937.


\bibitem{LM94}
A.~Lasota, M.~C. Mackey.
\newblock {Chaos, Fractals, and Noise : Stochastic Aspects of Dynamics}.
\newblock Springer, 1994.

\bibitem{Lasserre01moments}
J.-B. Lasserre.
\newblock {Global Optimization with Polynomials and the Problem of Moments}.
\newblock {SIAM Journal on Optimization}, 11(3):796--817, 2001.

\bibitem{lasserre2009moments}
J.-B. Lasserre.
\newblock {{Moments, Positive Polynomials and Their Applications}}.
\newblock Imperial College Press,
  2010.

\bibitem{lasserre2013borel}
J.-B. Lasserre.
\newblock Borel measures with a density on a compact semi-algebraic set.
\newblock {Archiv der Mathematik}, 101(4):361--371, 2013.

\bibitem{Las16Decomp}
J.-B. Lasserre.
\newblock Lebesgue decomposition in action via semidefinite relaxations.
\newblock {Advances in Computational Mathematics}, 42(5):1129--1148, 2016.

\bibitem{LasserreNetzer07SOS}
J.-B. {Lasserre}, T.~{Netzer}.
\newblock {SOS approximations of nonnegative polynomials via simple high degree
  perturbations.}
\newblock {{Math. Z.}}, 256(1):99--112, 2007.

\bibitem{Christoffel17}
J.~B. {Lasserre}, E.~{Pauwels}.
\newblock {The empirical Christoffel function in Statistics and Machine
  Learning}.
\newblock {\tt arXiv:1701.02886}.
\newblock 2017.

\bibitem{lieb2001analysis}
E.~H. Lieb, M.~Loss.
\newblock {Analysis}.
\newblock American Mathematical Society, 2001.

\bibitem{Luenberger97}
D.~G. Luenberger.
\newblock {{Optimization by Vector Space Methods}}.
\newblock John Wiley \& Sons, 1968.

\bibitem{YALMIP}
J.~Löfberg.
\newblock Yalmip : {A} toolbox for modeling and optimization in {MATLAB}.
\newblock In {Proceedings of the IEEE CACSD Symposium}, 2004.

\bibitem{MGHT17reach}
V.~{Magron}, P.-L. {Garoche}, D.~{Henrion}, X.~{Thirioux}.
\newblock {Semidefinite Approximations of Reachable Sets for Discrete-time
  Polynomial Systems}.
  \newblock {\tt arXiv:1703.05085}.
\newblock 2017.


\bibitem{Royden}
H.L. Royden, P.~Fitzpatrick.
\newblock {{Real Analysis}}.
\newblock Prentice Hall, 2010.

\bibitem{Trnovska05SDP}
M.~Trnovsk\'a.
\newblock Strong duality conditions in semidefinite programming.
\newblock {Journal of Electrical Engineering}, 56(12/s):1--5, 2005.

\bibitem{Ulam47}
S.~M. Ulam, J.~von Neumann.
\newblock On combinations of stochastic and deterministic processes.
\newblock {Bull. Amer. Math. Soc.}, 53:1120, 1947.

\bibitem{VanderPol26}
B.~Van~der Pol.
\newblock On relaxation oscillations.
\newblock {The London, Edinburgh and Dublin Phil. Mag. \& J. of Sci.},
  2:978--992, 1926.

\bibitem{Weyl1910}
H.~Weyl.
\newblock {\"U}ber die Gibbs'sche Erscheinung und Verwandte Konvergenzph{\"a}nomene.
\newblock {Rendiconti del Circolo Matematico di Palermo}, 30(1):377--407, 1910.

\bibitem{yosida1980functional}
K.~Yosida.
\newblock Functional analysis.
\newblock Springer, 1980.

\bibitem{Yosida52}
K.~Yosida, E.~Hewitt.
\newblock {Finitely Additive Measures}.
\newblock {{Transactions of the American Mathematical Society}},
  72(1):46--66, 1952.

\end{thebibliography}

\end{document}